\let\csname equation*\endcsname\relax 
\let\csname endequation*\endcsname\relax 
\newcommand{\stkout}[1]{\ifmmode\text{\sout{\ensuremath{#1}}}\else\sout{#1}\fi} % fix for math mode
\def\rg{\mathrm{rg }\,  }
\def\d{\mathrm{d}}
\def\p{\partial }
\def\D{\mathrm{D}}
\def\Id{\mathrm{Id}}
\def\e{\epsilon}
\def\N{\mathbb{N}}
\def\R{\mathbb{R}}
\def\diag{\mathrm{diag}\, }
\newtheorem{theorem}{Theorem}[section]
\newtheorem{assumption}{Assumption}[section]
\newtheorem{corollary}[theorem]{Corollary}
\newtheorem{lemma}[theorem]{Lemma}
\newtheorem{prop}[theorem]{Proposition}
\theoremstyle{definition}
\newtheorem{remark}[theorem]{Remark}
\newtheorem{example}[theorem]{Example}
\newtheorem{definition}[theorem]{Definition}
\begin{document}

\title[Detection of high codimensional bifurcations in variational PDEs]{Detection of high codimensional bifurcations\\
in variational PDEs}
\author{L M Kreusser$^1$ R I McLachlan$^2$ and C Offen$^2$\footnote{corresponding author}}
\address{$^1$ Department of Applied Mathematics and Theoretical Physics (DAMTP), 
University of Cambridge,
Cambridge,
United Kingdom
}
\address{$^2$ School of Fundamental Sciences, Massey University, Palmerston North, New Zealand}
\eads{\mailto{L.M.Kreusser@damtp.cam.ac.uk},
\mailto{r.mclachlan@massey.ac.nz},
 \mailto{c.offen@massey.ac.nz}
 }

\noindent{\it Keywords\/}:
bifurcations,
numerical continuation,
augmented systems,
%variational integrators,
Bell polynomials
%singularity detection

\pacs{02.30.Jr,
02.30.Oz,
02.30.Sa}

\submitto{\NL}

\begin{abstract}
We derive bifurcation test equations for $A$-series singularities of nonlinear functionals and, based on these equations, we propose a numerical method for detecting high codimensional bifurcations in parameter-dependent PDEs such as  parameter-dependent semilinear Poisson equations. As an example, we consider a Bratu-type problem and show how high codimensional bifurcations such as the swallowtail bifurcation can be found numerically. In particular, our original contributions are (1) the use of the Infinite-Dimensional Splitting Lemma, (2) the unified and simplified treatment of all $A$-series bifurcations, 
(3) the presentation in Banach spaces, i.e.\ our results apply both to the PDE and its (variational) discretization, 
(4)~further simplifications for parameter-dependent semilinear Poisson equations (both continuous and discrete), and
(5) the unified treatment of the continuous problem and its discretisation.
% In particular, we show how to detect a swallowtail bifurcation in a Bratu-type problem. 
\end{abstract}

\maketitle

\section{Introduction}

Nonlinear systems of ordinary (ODEs) or  partial differential equations (PDEs) are the basis for modeling many  problems in science and technology, including biological pattern formation, viscous fluid flow phenomena, chemical reactions and crystal growth processes. These mathematical models usually depend on a number of parameters and in terms of  applications  it is crucial to understand the qualitative dependence of the solution on the model parameters.

To explore the solution set of a parameter-dependent system of ordinary or partial differential equations subject to initial and/or boundary conditions, it is helpful to find parameter values and points in the phase space at which bifurcations occur, i.e.\ at which the structure of the solution set changes. Most prominent is the fold bifurcation in which two branches of solutions merge and die as one parameter is varied (see its bifurcation diagram on the left in \cref{fig:AseriesIllus}). 
A fold bifurcation is a generic phenomenon, i.e.\ it persists under small perturbations of the parameter-dependent system of differential equations.%, if there is at least one parameter in the system.

%The number of parameters which must be varied for a bifurcation to occur is called the codimension of a bifurcation. 
Bifurcations can be classified into low and high codimensional bifurcations according to their codimension where the codimension of a bifurcation is defined as the
 %number of parameters in the parameter-dependent system which must be varied for a bifurcation to occur.
minimal number of parameters in which that bifurcation type occurs.
Low codimensional bifurcations such as folds have been studied extensively in the literature. 
Detecting such points in the parameter-phase portrait, following the solution branches and classifying them is an important task  for getting a better understanding of the physical properties of the underlying model \cite{LEGER20171}. The detection of bifurcation points and the numerical computation of the associated  bifurcation diagram is typically based on following solution branches using numerical continuation methods. Numerical continuation has been an active research area, see  \cite{abbott1978,bathe1983,crisfield1981fast,KrauskopfNumericalContinuation,rheinboldt1981numerical,riks1972application,wagner1988simple,wriggers1990general}, for instance, and dynamical systems software packages like AUTO \cite{Doedel,Doedel:2003}, CONTENT \cite{Govaerts:1998} and MATCONT \cite{Dhooge2003,Dhooge2003:proc} are available for the detection of bifurcations in ODEs.  Applying continuation methods to PDEs, however, leads to the additional challenge of large systems of equations since these methods are typically based on finite element or finite difference discretizations of PDEs, as in the MATLAB package pde2path \cite{Uecker_2014}, for instance. For the continuation process, predictor-corrector algorithms are typically used, i.e.\ nonlinear systems have to be solved in each continuation step using Newton-type methods \cite{Fedoseyev2000}. Some convergence results are available \cite{Boehmer_2010,Kunkel,paez2012}.

If more than one parameter is present in the definition of the underlying mathematical model, more complicated bifurcations may occur. In particular,  the presence of many parameters is required for bifurcation points or singular points of high codimension to occur such that they are persistent under perturbations of the dynamical system. High codimensional bifurcations act like organising centres in bifurcation diagrams, i.e.\ they determine which bifurcations happen close to  singular points \cite{Broer2007OrganisingCI}, and occur in many biological and chemical systems \cite{KERTESZ,Liu2016,Peng2008}. Therefore, it is of interest to find high codimensional singularities. Just as in detecting low codimensional bifurcations, one typically follows a branch of solutions using numerical continuation methods  \cite{KrauskopfNumericalContinuation}.
During this process it is crucial to detect singularities along the way in order to recognize other merging branches of solutions which would otherwise be missed. 

It is often useful to classify bifurcations into two principal classes: changes in the topology of the phase portrait \cite{Kuznetsov2004} and changes of the solution set.  %Local bifurcations are caused by changes in the local stability properties of equilibria and can be studied by investigating the local topological changes in the phase portrait of the system. 
%Examples of bifurcations characterised by topological changes in the phase portrait of the system include transcritical, Hopf, Bogdanov-Takens and saddle-node bifurcations.
For example, in the recent paper \cite{Diouf2019}, the authors consider a reaction-diffusion model 
%with modified Leslie-Gower and Holling-type II functional response 
and show that by varying  three model parameters simultaneously, they could delimit several bifurcation surfaces with high codimension such as transcritical, Bogdanov-Takens, Hopf and saddle-node bifurcations. Motivated by biological models, a classification of  Bogdanov-Takens singularities is presented, and some properties of two singularities of high codimension is proven \cite{KERTESZ}. For specific examples of  neural network systems, high codimensional bifurcations are studied in \cite{Liu2016,Peng2008}. 

%While low codimensional bifurcations have been studied extensively, it seems that there are not many studies on  high codimensional bifurcations available in literature. 

%In contrast to local bifurcations of dynamical systems, bifurcations of boundary value problems are global in nature and may not be related to any obvious changes in dynamical behaviour. 
This paper deals with bifurcations of solutions of differential equations and in particular with those whose solutions are critical points of a functional. These bifurcations can be classified according to catastrophe theory \cite{Arnold1992} and the first three bifurcation classes of this classification are the so-called $A$-series, $D$-series and $E$-series bifurcations. 
%Catastrophe theory is a branch of bifurcation theory analysing degenerate critical points of  potential functions, i.e.\ points where not only the first but also higher-order derivatives may vanish. The bifurcations arising in catastrophe theory are classified according to the ADE classification according to Arnold \cite{Arnold1992}, leading to so-called $A$-series, $D$-series and $E$-series bifurcations. 
In fact, all occurring bifurcation phenomena can be classified and related to catastrophe theory \cite{bifurHampaper,numericalPaper} for special instances, e.g.\ for systems of differential equations with Hamiltonian structure and boundary condition of Lagrangian type such as many physically relevant boundary conditions like periodic, Dirichlet or Neumann boundary conditions for second order ODEs.
%In comparison to ODEs, the detection of bifurcations of solutions of PDEs poses the additional challenge of solving large systems of equations.
In comparison to ODEs, the detection of bifurcations of solutions of PDEs poses additional challenges including infinite dimensionality and (after discretisation) the need to solve large systems of equations.
As a prototype of a PDE problem, we consider the semilinear Poisson equation 
\begin{equation*}
\left\{\begin{aligned}
\Delta u + f(u,\lambda) &= 0\\
u|_{\p \Omega} &=0
\end{aligned}\right.
\end{equation*}
for {$u$ defined on $\Omega \subset \R^d,\lambda\in \R^k$}. Its parameter-dependent solutions can be regarded as stationary solutions of an associated reaction-diffusion equation with many applications in the physical sciences.

{It seems that   maps of singularities in the Banach space setting were first considered by Ambrosetti and Prodi \cite{Ambrosetti1972} where they studied the situation of the fold map. Since then, fold and cusp maps have been studied extensively and many characterizations of fold and cusp maps have been given  \cite{Berger1985,Berger1988,Lazzeri1987,Church1992fold,Church1993cusp}.}

{According to the Ambrosetti-Prodi theorem \cite{Ambrosetti1972}, the map $F(u) = \Delta u + f(u,\lambda)$ between appropriate functional spaces is a global fold map, provided certain hypotheses such as the convexity of the function $f$ with respect to its first component are satisfied. In \cite{Calanchi}, the authors show under mild conditions that convexity is indeed necessary. If the convexity condition of $f$ is not fulfilled, there exists a point with at least four preimages under $F$ and $F$ generically admits cusps among its critical points. }

%In this paper, we are mainly interested in higher singularities. 
{Following the fold and the cusp, the two subsequent singularities are the swallowtail and the butterfly whose maps were characterized by Ruf  \cite{ruf1995HigherSingularities}. As an application, an elliptic boundary value problem with cubic nonlinearity, given by 
$\Delta u + \lambda u -u^3+h=0$
on a bounded, smooth domain $\Omega\subset \R^d$ with either Dirichlet or Neumann boundary conditions, is often considered. Here, the forcing term $h$ is a given Hölder continuous function with exponent $\alpha\in(0,1)$. This PDE, equipped with Neumann boundary conditions, has  been studied in \cite{Ruf1990,ruf1992ForcedSecondaryBifur,ruf1995HigherSingularities} in detail. In \cite{Ruf1990}, Ruf showed that for certain parameter values $\lambda$ the elliptic equation has at most three solutions  for arbitrary forcing terms $h$. In the subsequent paper \cite{ruf1992ForcedSecondaryBifur}, it is shown that for other parameter values $\lambda$ a secondary bifurcation occurs. In particular, at least five solutions can occur  for certain forcing terms. Ruf \cite{ruf1995HigherSingularities} also gave a characterization of the solution geometry for the elliptic equation with cubic nonlinearity (and Neumann boundary conditions), while, independently,  the same problem (with Dirichlet boundary conditions) was studied in \cite{Church1993NonlinOperator}. Church and Timourian also derived abstract conditions for the global equivalence of a nonlinear mapping to the fold map \cite{Church1992fold} and the cusp map \cite{Church1993cusp}. A nice historical overview about progress in singularity theory in differential equations is provided in \cite{ruf1997}.} 

{The singularities mentioned above can be understood as singularities of functionals $G \colon E \to E$, where the equation $G(u)=0$ corresponds to the weak form of the PDE. In contrast, we investigate singularities of functionals $S \colon E \to \R$ where the weak formulation of the PDE is recovered as $\D S(u) = 0$, where $\D$ denotes the Fr\'echet derivative. These singularities are related to classical catastrophe theory.}
We will mainly consider the important class of $A$-series singularities which can be defined informally as solutions at which the linearised equation has a one-dimensional kernel. 
%which can be defined informally as equations with a one-dimensional kernel.
However, due to the unified treatment of bifurcations in catastrophe theory, similar methods can be used to detect other high codimensional bifurcations in catastrophe theory such as $D$-series bifurcations. \Cref{fig:AseriesIllus} shows illustrations of the first three $A$-series singularities: the fold ($A_2$), the cusp ($A_3$) and the swallowtail ($A_4$) bifurcation. Further elementary singularities are discussed in \cite{Gilmore1993catastrophe}. Numerical methods to calculate bifurcation points of systems of equations are discussed in \cite{Beyn_1984,Bohmer1993,BOHMER1999277,Fink,GRIEWANK,Hermann,Kunkel,Seydel_2010}.  Fink et al.\ \cite{Fink} present a general mathematical framework for the numerical study of  bifurcation phenomena associated with  parameter-dependent equations. Singularities are directly computed as solutions of a minimally augmented defining system \cite{GRIEWANK,Hermann}. Besides, singularities of functionals with an application to PDEs are discussed in \cite{Kielhoefer_2012}. A numerical experiment for detecting a high codimensional $A$-series bifurcation in a one-dimensional example can be found in \cite[\S 4]{Beyn1984}, for instance. 

The two main approaches to bifurcation of solutions to PDEs which have been considered in the past are generalized Lyapunov-Schmidt reductions \cite{Bohmer1993,BOHMER1999277,Kunkel,Mei2000} and topological methods in the calculus of variations which so far can only access relatively simple bifurcations. The Lyapunov-Schmidt reduction can be used to study solutions to nonlinear equations when the implicit function theorem cannot be applied, and allows the reduction of infinite-dimensional equations in Banach spaces to finite-dimensional equations. However, Lyapunov-Schmidt reductions do not use the variational structure.
%It seems that  the full $A$-series bifurcations have only been considered  abstractly and in particular, the relation to Bell polynomials \cite{BellPartitionPolys} seems not to be discussed in literature, except its abstract consideration related to the Infinite-Dimensional Splitting Lemma \cite{Golubitsky1983}. Instead, bifurcation test equations are often derived by using generalized Lyapunov-Schmidt reductions \cite{Bohmer1993,BOHMER1999277,Kunkel}. 
 %However, $A$-series bifurcations and, more generally, bifurcations in catastrophe theory are critical points of a functional and a natural approach is to exploit the gradient structure which is not the case for Lyapunov-Schmidt reductions. 
This motivates the development of a method for detecting high codimensional $A$-series bifurcations in high-dimensional or even infinite-dimensional phase spaces which makes use of the underlying variational structure. Based on the Infinite-Dimensional Splitting Lemma \cite{Golubitsky1983}, we derive an augmented system for $A$-series singularities and apply it to PDEs by considering their variational formulations. For the numerical implementation, we use the most natural approach and discretise with a variational method. This procedure leads to a numerical method for the detection of high codimensional bifurcations in catastrophe theory which can be applied to a large class of parameter-dependent PDEs. In particular, our original contributions are:
\begin{enumerate}
	\item The use of the Infinite-Dimensional Splitting Lemma.
	\item A unified and simplified treatment of all $A$-series bifurcations.
	\item The presentation in Banach spaces, i.e.\ our results apply both to variational PDEs and their discretisations.
	\item Further simplification for the parameter-dependent semilinear Poisson equation (both in the continuous and discrete setting).
	\item A unified treatment of the continuous problem and its discretisation.
\end{enumerate}
%In particular, our approach takes account of the  Infinite-dimensional Splitting Lemma, the variational formulation of the PDE with a variational integrator and the gradient structure expressed by the critical points of the functional.

The paper is organised as follows. In \cref{sec:SplittingandBifurTest} we recall an observation by Golubitsky and Marsden \cite{Golubitsky1983} that catastrophe theory, i.e.\ the classification of the bifurcation behaviour of critical points of smooth, real-valued functions on (finite-dimensional) spaces, applies to smooth, nonlinear functionals on Banach spaces.
%We use this result to derive explicit bifurcation test equations, determining equations or augmented systems for all $A$-series singularities which can be expressed by  the derivatives of the original functional.
We use this result to derive explicit bifurcation test equations (also known as determining equations or augmented systems) for all A-series singularities, expressed in terms of the derivatives of the original functional.
Based on these derived augmented systems of equations, we propose a numerical method for finding high codimensional bifurcations in parameter-dependent PDEs. This numerical scheme is illustrated in  \cref{sec:NumExp} where we consider  a Bratu-type boundary value problem as an  of a second order PDE  and detect its  high codimensional bifurcations numerically.  Finally, we conclude in \cref{sec:Conclusion}.

\begin{figure}
\begin{center}
\includegraphics[width=0.17\textheight]{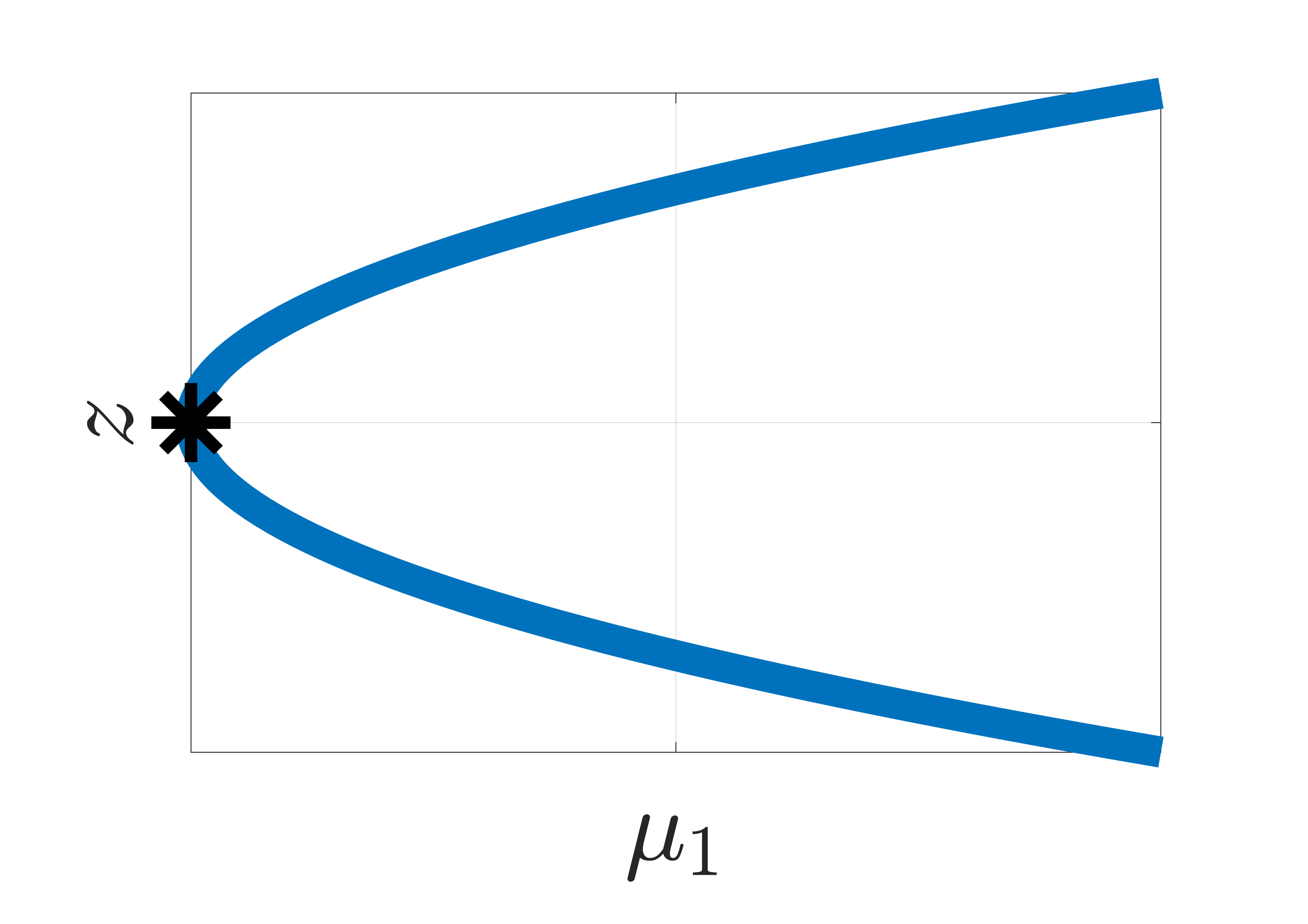}\;
\includegraphics[width=0.16\textheight]{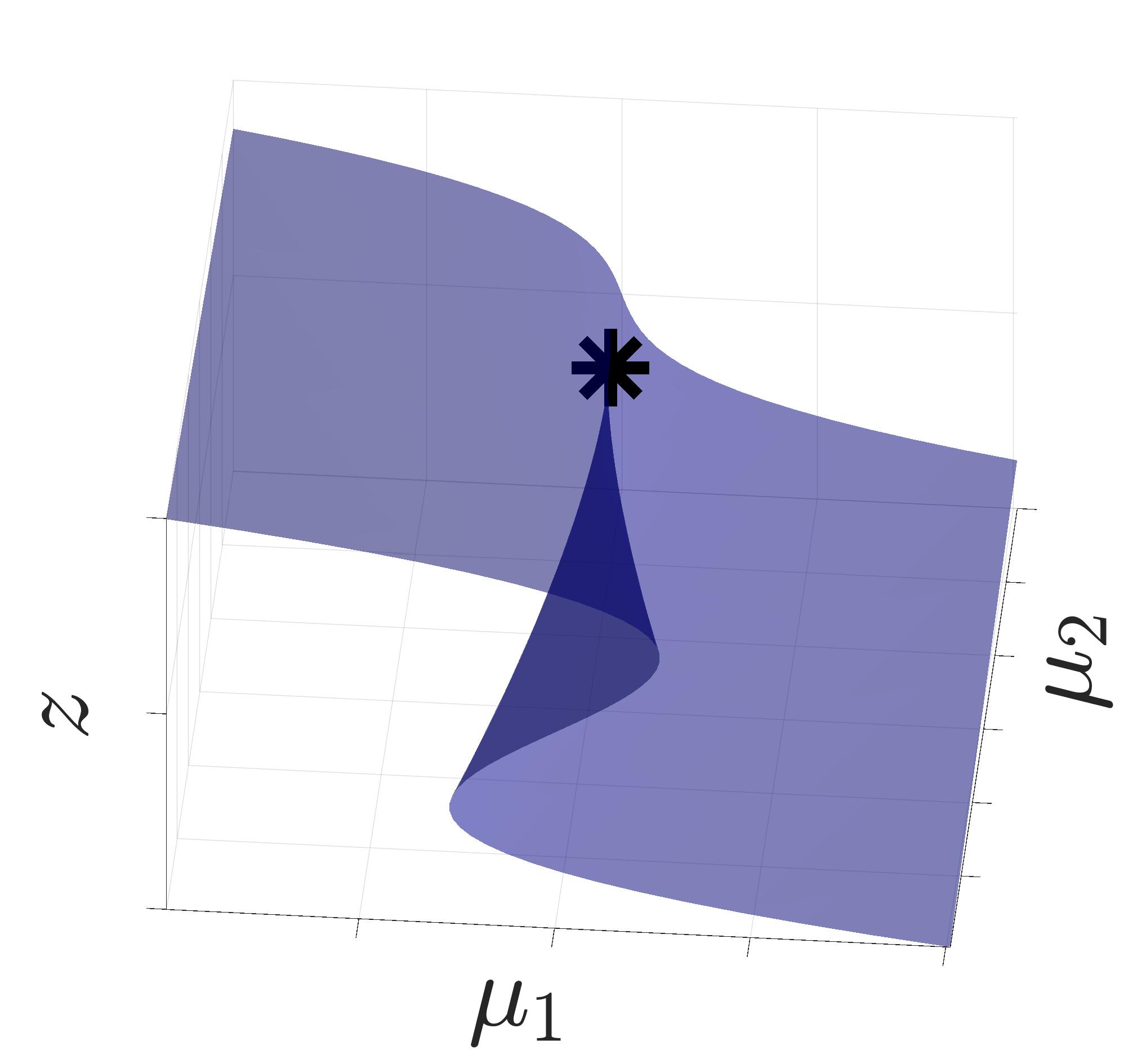}\;
\includegraphics[width=0.16\textheight]{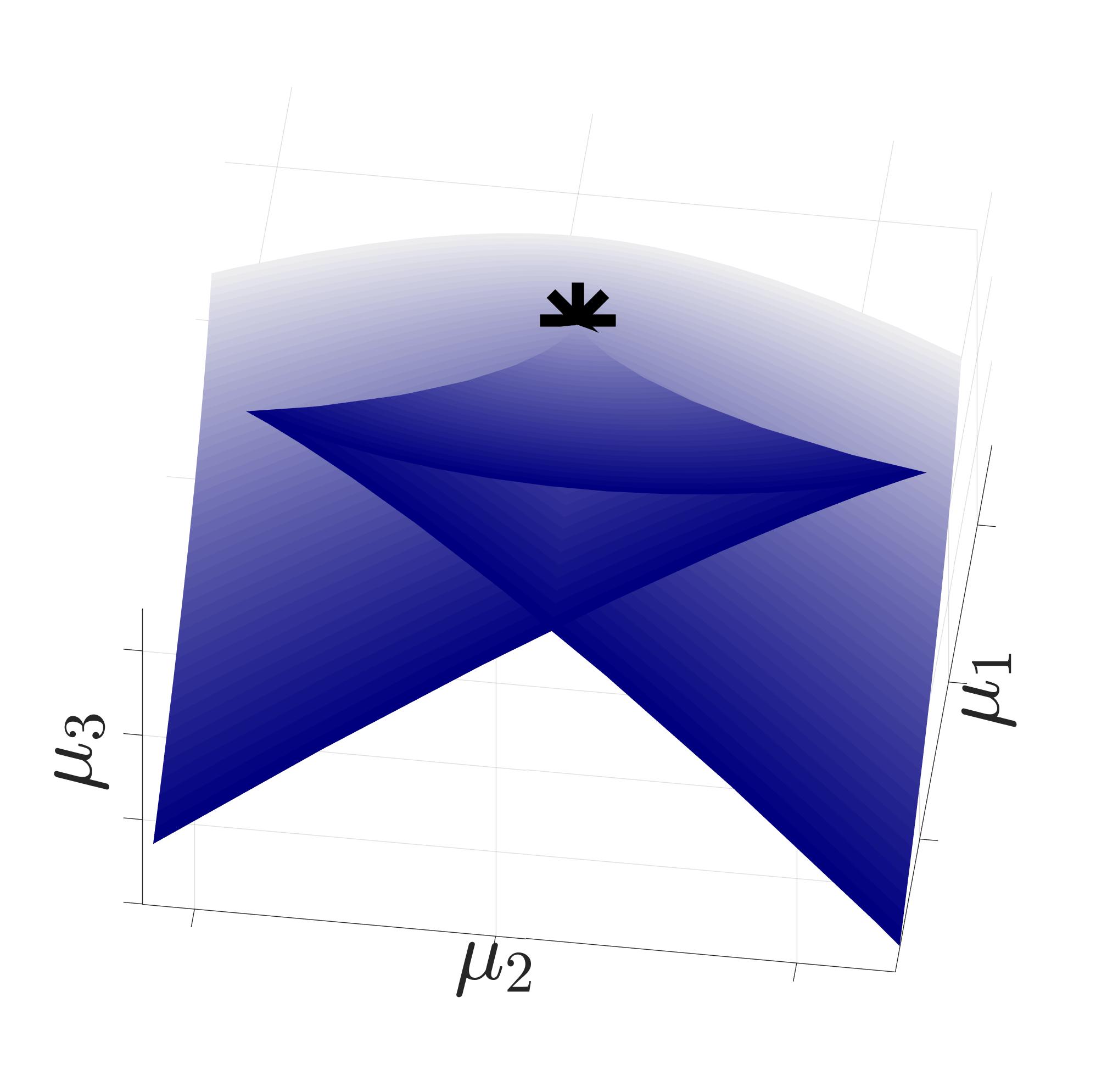}
\end{center}
\caption{The fold ($A_2$), the cusp ($A_3$) and the swallowtail bifurcation ($A_4$) (from left to right). In the diagrams $\mu_1$, $\mu_2$, $\mu_3$ denote parameters and $z$ a dependent. We assume that for a given set of parameters no two solutions have the same $z$ value. In the first diagram we see how two solutions are born out of a fold singularity ($A_2$) denoted by $\ast$. The second diagram shows a surface of solutions over the $\mu_1$, $\mu_2$ plane. For different combinations of parameters $(\mu_1,\mu_2)$ there are 3, 2 or 1 solutions. The point of highest singularity, i.e.\ the cusp point ($A_3$), is denoted by $\ast$. The third plot shows a projection of a bifurcation diagram with a swallowtail point ($A_4$) to the parameter space. Each point in the sheet corresponds to a fold singularity, an intersection of sheets means that two fold singularities happen for the same parameter value but at different points in the phase space, points on the two edges correspond to cusp points. The point where the two edges join with the intersection line is a swallowtail point marked by $\ast$. }\label{fig:AseriesIllus}
\end{figure}

\section{Augmented systems for nonlinear functionals}\label{sec:SplittingandBifurTest}

%The following statement is an infinite dimensional version of the {\em Splitting lemma} or {\em parametric Morse lemma} known from catastrophe theory or singularity theory . It says that singularities of smooth, real-valued functions on Banach spaces are governed by ordinary finite-dimensional catastrophe theory.

\subsection{The Splitting Lemma in Banach spaces}\label{sec:SplittingLemmaBanach}

Let $E$ be a Banach space, let $U\subset E$ be an open neighbourhood of the origin and let $S \colon U \to \R$ be a smooth function with $S(0)=0$.
%Let $S \colon E \to \R$ be a smooth function on a Banach space $E$ defined on an open neighbourhood $U$ of the origin with $S(0)=0$. Consider an inner product $\langle \cdot , \cdot \rangle_E$ on $E$.
We define the following two assumptions.

\begin{assumption}\label{as:T1}
There exists an inner product $\langle \cdot , \cdot \rangle_E$ on $E$ and a Fredholm operator $T\colon E\to E$ of index 0 such that
\[
\D^2 S(0)(u,v) = \langle Tu , v \rangle_E \quad \text{for all } u,v \in E.
\]
\end{assumption}

In \cref{as:T1} the symbol $\D$ denotes the Fr\'echet derivative. The second order derivative $D^2S(0)$ of $S$ at $0$ is a symmetric bilinear form on $E\times E$. The index-0 Fredholm operator $T$ is symmetric such that $E = \ker T \oplus \rg T$, where $\ker T$ denotes the kernel of $T$ and $\rg T$ the range of $T$. We denote elements $u \in E$ by its components $u = (x,y)$ w.r.t.\ the splitting, i.e.\ $x\in \ker T$ and $y\in \rg T$.

\begin{assumption}\label{as:T2}
There exists a {\em partial gradient} $\nabla_y S \colon U \to \rg T$ with $\nabla_y S(0)=0$ such that
\[
\forall u\in U,\quad \forall v \in \rg T  : \quad \langle \nabla_y S(u),v\rangle_E =\D S(u)v.
\]
\end{assumption}

\begin{theorem}[Infinite-Dimensional Splitting Lemma \cite{Golubitsky1983}]\label{thm:SplittingLemmaInfinte}
If \cref{as:T1} and \cref{as:T2} hold then there exists a fibred change of coordinates $( \bar x, \bar y) = ( x,\eta(x,y))$ on $U$ fixing $(0,0)\in E$ with $\eta \colon U \to \rg T$ such that $\D_y \eta (0,0)= \Id$ and such that $S$ takes the form
\[
S(\bar x, \bar y)= \frac 12 \langle T\bar y,\bar y \rangle + r(\bar x)
\]
for a smooth function $r\colon \mathcal O \to \R$, where $\mathcal O$ is an open neighbourhood of $0 \in \ker T$ and $r(0)=0$, $\D r(0)=0$ and $\D^2 r(0)=0$. 
\end{theorem}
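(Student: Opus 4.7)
The plan is the classical Morse--Palais splitting argument, adapted to the Banach-plus-inner-product setting of Assumption~1. I would first use the implicit function theorem to eliminate the $\rg T$ coordinate, obtaining a critical submanifold parametrised by $\ker T$, and then apply a parametric Morse lemma to bring the remaining transverse part into the normal form $\tfrac12 \langle T\bar y, \bar y\rangle_E$.

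Stage one: view $F(x,y):=\nabla_y S(x,y)$ as a map $U \to \rg T$, which exists by Assumption~2. Then $F(0,0)=0$ and $\D_y F(0,0) = T|_{\rg T}$ is a bijection of $\rg T$, because $T$ is a symmetric index-$0$ Fredholm operator with $E = \ker T \oplus \rg T$. The implicit function theorem yields a smooth $\phi\colon\mathcal O\to\rg T$ with $\phi(0)=0$ and $\nabla_y S(x,\phi(x)) \equiv 0$. Differentiating this identity at the origin and using that $\D^2 S(0)$ vanishes on $\ker T \times \rg T$ (a consequence of $\D^2 S(0)(u,v) = \langle Tu,v\rangle_E$) gives $\D\phi(0)=0$. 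Setting $r(x):=S(x,\phi(x))$, the chain rule together with $\nabla_y S(x,\phi(x)) \equiv 0$ yields $\D r(0)=0$ and $\D^2 r(0)(u,v) = \D^2 S(0)(u,v) = \langle Tu,v\rangle_E = 0$ for all $u,v \in \ker T$.

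Stage two: substitute $z := y - \phi(x)$ and set $f(x,z):=S(x,\phi(x)+z)-r(x)$. Then $f(x,0)=0$ and $\D_z f(x,0)=0$, so Taylor's theorem with integral remainder gives $f(x,z)=\tfrac12 \langle B(x,z)z,z\rangle_E$ for a smooth family of symmetric operators $B(x,z)\colon \rg T\to\rg T$ with $B(0,0) = T|_{\rg T}$. I would then seek an invertible $A(x,z)\in \mathrm{GL}(\rg T)$ with $A(0,0)=\Id$ satisfying $A(x,z)^\ast\, T\, A(x,z) = B(x,z)$, since setting $\bar y := A(x,z)z$ and $\eta(x,y) := A(x,y-\phi(x))(y-\phi(x))$ then puts $S$ into the desired form, with $\D_y\eta(0,0)=\Id$ because $A(0,0)=\Id$ and $\phi(0)=0$. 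Writing $A = \Id + C$ reduces the matching equation to $\mathcal L(C) = B - T + \mathcal N(C)$, where $\mathcal N$ is quadratic in $C$ and $\mathcal L(C) := TC + C^\ast T$; this is solvable for small $C$ by the implicit function theorem once $\mathcal L$ is made an isomorphism, for example by restricting to the gauge slice $\{C : TC = C^\ast T\}$, on which $\mathcal L$ acts as $2TC$ and is invertible because $T|_{\rg T}$ is.

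The hard part will be making this last parametric Morse step rigorous. The algebra with symmetric operators and square roots is routine in a pure Hilbert setting, but in Assumption~1 the inner product is an auxiliary structure on top of the Banach space $E$, so one must specify a Banach algebra of operators on $\rg T$ in which the linearisation $\mathcal L$ (after gauge fixing) is an isomorphism, verify smooth dependence of $B(x,z)$ in that algebra, and confirm that the resulting $\eta$ is a diffeomorphism with respect to the original Banach topology of $E$. The Fredholm hypothesis on $T$ is what ultimately makes $T|_{\rg T}$ a topological isomorphism and powers these estimates.
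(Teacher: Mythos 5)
Your Stage~1 matches the paper exactly: the paper does not give a self-contained proof of \cref{thm:SplittingLemmaInfinte} but cites \cite{Golubitsky1983} and only records the construction of $r$ via the implicit function theorem applied to $\nabla_y S$, which is your $\phi$ (the paper's $F$). Your verifications that $\D_y\nabla_y S(0,0)=T|_{\rg T}$ is an isomorphism, that $\D\phi(0)=0$, and that $\D^2 r(0)=0$, are the same bookkeeping that the paper alludes to and repeats in the proof of \cref{lem:determineF0}. (One small caveat: $\D r(0)=\D_x S(0,\phi(0))$, so the conclusion $\D r(0)=0$ implicitly uses that $0$ is a critical point of $S$; both you and the paper take that as understood.)

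Stage~2 is where you diverge from the cited source, and where the genuine gap sits. You propose the classical Morse--Palais normal form: Taylor with integral remainder to write the transverse part as $\tfrac12\langle B(x,z)z,z\rangle_E$ with $B(0,0)=T|_{\rg T}$, then an implicit-function-theorem search for $A$ with $A^\ast T A = B$. That is the textbook route in a Hilbert space, but \cref{as:T1} places you in a Banach space $E$ carrying only an \emph{auxiliary} inner product $\langle\cdot,\cdot\rangle_E$ that need not be equivalent to the Banach norm --- and this is exactly the regime of the paper's own application, where $E=H_0^1\cap H^k$ with the $H^k$ topology while $\langle\cdot,\cdot\rangle_E$ is the $H_0^1$ form. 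In such a setting the adjoint $C\mapsto C^\ast$ with respect to $\langle\cdot,\cdot\rangle_E$ need not take bounded operators on $\rg T$ to bounded operators, since $(\rg T,\langle\cdot,\cdot\rangle_E)$ is an incomplete inner-product space. Consequently your linearised equation $\mathcal L(C)=TC+C^\ast T$ is not obviously well-posed, the gauge slice $\{C : TC=C^\ast T\}$ need not be a Banach subspace on which $\mathcal L$ acts continuously, and the map $(x,z)\mapsto B(x,z)$ may fail to be smooth into whatever operator class one would need. This is not a technicality to be patched up: it is the central obstruction that \cite{Golubitsky1983} was written to circumvent --- the very title, ``The Morse lemma in infinite dimensions \emph{via singularity theory}'', signals that the transverse normal form is obtained by a finite-determinacy argument, not by the operator-algebra construction you sketch. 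The companion reference \cite{Buchner1983}, which the paper cites ``for a discussion of the setting and examples,'' gives examples illustrating how the naive functional-analytic route breaks down here.

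So: your Stage~1 is correct and is essentially the paper's own discussion; your Stage~2 is a different route from the cited proof, would be fine if $\langle\cdot,\cdot\rangle_E$ were a complete inner product inducing the topology of $E$, but does not establish the theorem under \cref{as:T1,as:T2} at the generality claimed. You have identified the weak spot in your closing paragraph; the honest conclusion is that Stage~2 must be replaced by the singularity-theoretic argument, not merely made more careful.
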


In \cref{thm:SplittingLemmaInfinte} $\D_y \eta$ denotes the Fr\'echet derivative of $\eta$ in the direction of $\rg T$. 
%A proof can be found in \cite{Golubitsky1983}. 
For a discussion of the setting and examples refer to \cite{Buchner1983}. The function $r$ is defined as follows: by the implicit function theorem, there exists a unique, smooth function $F\colon U_{\ker T} \to U_{\rg T}$  for open neighbourhoods  $U_{\ker T}\subset \ker T$ and $U_{\rg T}\subset \rg T$  of the origin in $\ker T$ and $\rg T$, respectively, such that
%mapping an open neighbourhood $U_{\ker T}$ of the origin of $\ker T$ to an open neighbourhood $U_{\rg T}$ of the origin of $\rg T$ with 
\begin{equation}\label{eq:defF}
\begin{split}
F(0)=0, \quad \D F(0)=0,\\
\forall x \in U_{\ker T} : \nabla_y S(x,F(x)) = 0.
\end{split}
\end{equation}
The map $r\colon U_{\ker T} \to \R$ is given by
\begin{equation}\label{eq:r}
r(x) = S(x,F(x)).
\end{equation}
We see that critical points of $S$ correspond to critical points of $r$ which is defined on a finite-dimensional space. Singularity theory for $S$ thus reduces to ordinary, finite-dimensional catastrophe theory \cite{Arnold2012,Wassermann1974}. More precisely, to determine the singularities of $S$ it suffices to determine the singularities of $r$.

\subsection{Augmented systems for $A$-series singularities}\label{subsec:AugSysASeries}

$A$-series singularities for a given functional $S$ are defined as follows.

\begin{definition}[$A$-series singularity]\label{def:ASing}
Let $E$ be a Banach space, let $U\subset E$ be a neighbourhood of the origin and let $S \colon U \to \R$ be a smooth function with $S(0)=0$.
%Let $S \colon E \to \R$ be a smooth function on a Banach space $E$ defined on an open neighbourhood $U$ of the origin with $S(0)=0$. Consider an inner product $\langle \cdot , \cdot \rangle_E$ on $E$. 
Assume that \cref{as:T1} and \cref{as:T2} hold and consider the function $r$ from \eqref{eq:r}. The function $S$ has a singularity of type $A_n$ at 0 if $\ker T$ (with $T$ as in \cref{as:T1}) is one-dimensional and
\[
\left.\frac{\d^k}{\d x^k}\right|_{x=0} r(x) =0,
\quad k=3,\ldots,n,
\qquad \left.\frac{\d^{n+1}}{\d x^{n+1}}\right|_{x=0} r(x) \not = 0.
\]
\end{definition}

\begin{remark}
The singularity $A_2$ is referred to as {\em fold}, $A_3$ as {\em cusp}, $A_4$ as {\em swallowtail} and $A_5$ as {\em butterfly singularity}.
\end{remark}

\begin{remark}
{The Infinite-Dimensional Splitting Lemma (\cref{thm:SplittingLemmaInfinte}) allows us to borrow the notions of catastrophe theory and to define singularities of real-valued functionals $S \colon E \to \R$ fulfilling \cref{as:T1} and \cref{as:T2}. In applications, $\D S(u) (v) = 0\, \forall v \in E$ is the weak formulation of a PDE.
In the literature the gradient structure is typically not exploited for this purpose. Instead the more general class of singularities of functionals $\tilde S \colon E \to \tilde E$ between two Banach spaces $E$ and $\tilde E$ is considered as in \cite{Ambrosetti1972,Berger1985,Berger1988,Lazzeri1987,Church1992fold,Church1993cusp,ruf1995HigherSingularities,Ruf1990,ruf1992ForcedSecondaryBifur,Church1993NonlinOperator}. The general problem class $\tilde S(u)=0$ contains the class of catastrophe problems $\D S(u) = 0$. However, since the class of functions $E \to \tilde E$ is richer, this leads to a different notion of $A$-series singularities since stability properties do not coincide. To illustrate this point, let $\tilde E=E$. A map $\tilde S \colon E \to E$ with a singularity that is persistent under small perturbations $\tilde S + \e \tilde P$ with functions $\tilde P \colon E \to E$ is not necessarily of the form $\tilde S = \D S$. On the other hand a map $S \colon E \to \R$ with a singularity that is persistent under small perturbations $S + \e P$ with functions $P \colon E \to \R$ does not necessarily yield a map $ \tilde S = \D S \colon E \to E$ with a singularity that is persistent under small perturbations with functions of the bigger problem class $\tilde P \colon E \to E$ since perturbations with $\tilde P \not = \D P$ are allowed.
Let us illustrate the different notions of singularities on the cusp singularity. As proved by Whitney, the cusp map $f\colon \R^2 \to \R^2$ with $f(z,w) =(z^3+z w,w)$ is stable. If we plot the $z$-component of solutions $(z,w)$ to the system $f(z,w)=(\mu_1,\mu_2)$ over the $(\mu_1,\mu_2)$-plane then we obtain the plot in the centre of \cref{fig:AseriesIllus}. On the other hand, consider the map $h \colon \R^2 \to \R$ with $h(z,w) = \frac 14 z^4 + \frac 12 w^2$. The map $h$ has a cusp singularity at $(0,0)$. Its universal unfolding in catastrophe theory is given by $h_\mu(z,w) = \frac 14 z^4 + \frac 12 \mu_2 z^2 + \mu_1 z + \frac 12 w^2$. If we plot the $z$-component of solutions $(z,w)$ to the system $\nabla h_\mu(z,w)=(0,0)$ over the $(\mu_1,\mu_2)$-plane then we also obtain the plot in the centre of \cref{fig:AseriesIllus}.
Despite the same visualisation, the cusp map $f$ is not to be confused with the map $\nabla h$. The map $f$ does not have a primitive $g \colon \R^2 \to \R$ with $\nabla g = f$. Moreover, the map $f$ is stable in the class of smooth functions $\R^2 \to \R^2$ while the cusp catastrophe $h\colon \R^2 \to \R$ needs to be unfolded to be stable in the class of smooth functions $\R^2 \to \R$.
In this paper we will investigate the catastrophe setting and exploit the gradient structure for numerical continuation methods.}
\end{remark}

The $k^{\mathrm{th}}$ derivative of $S$ at a point $z \in U$ is a symmetric multi-linear form which we denote by $\D^{(k)}S(z)$. We can interpret the multi-linear form $\D^{(k)}S(z)$ as a linear form on $E^{\otimes k} = \bigotimes_{j=1}^k E$ where $\otimes$ denotes the tensor product. As a shorthand we define $S^{(k)} := \D^{(k)}S$ and $S_0^{(k)} := \D^{(k)}S(0)$.
%\textcolor{red}{Similarly, we define $F^{(k)}:=D^{(k)}F$ for the function $F\colon U_{\ker T} \to U_{\rg T}$ defined in \eqref{eq:defF}, i.e.\ $F^{(0)}(0)=0$, $F^{(1)}(0)=0$.}

We can express the condition for a function $S$ to have an $A$-series singularity in terms of (Fr\'echet)-derivatives of $S$. For this we define the multi-index set $\mathcal{J}_k^{n}$ for $n,k \in \N$ with $k \le n$ as

\begin{align}\label{eq:BellPolyIndexSet}
\left\{j=(j_1,\ldots,j_{n-k+1})\colon \; \;
%\vphantom{\sum_{l=1}^{n-k-1}l \cdot j_l = n-2}\right.\quad\left. 
j_l \in \N \cup\{0\}, \; \; 
\sum_{l=1}^{n-k+1}j_l = k, \; \;
\sum_{l=1}^{n-k+1}l \cdot j_l = n\right\}.
\end{align}
Moreover, for a multi-index $j \in \mathcal{J}_k^{n}$ we define $j! := j_1 ! j_2 ! \cdots j_{n-k+1}!$.

\begin{theorem}\label{thm:ASeriesDetect}
Let $E$ be a Banach space and consider an inner product $\langle \cdot , \cdot \rangle_E$ on $E$. Let $U\subset E$ be a neighbourhood of the origin and let $S \colon U \to \R$ be a smooth function with $S(0)=0$.
%Let $E$ be a Banach space equipped with an inner product $\langle \cdot , \cdot \rangle_E$ and let $S$ be a (nonlinear) smooth functional defined on an open neighbourhood of $0 \in E$.
%Consider the following algorithm consisting of steps which contain a tests. If a test fails, the algorithm terminates and returns the current value of the integer $n$. Otherwise it proceeds with the next step.
%
Consider the following algorithm consisting of a sequence of tests. The algorithm terminates and returns the current value of the integer $n$ if a test fails. In the algorithm $F$ is considered as a function symbol of an unknown smooth map $I \to E$, where $I$ is a small open interval in $\R$ containing 0 and with $F(0)=F^{(0)}(0)=0$, $F^{(1)}(0)=0$.

\begin{itemize}
\item
Set $n=1$. Test $S_0^{(1)}=0$. 
\item
Set $n=2$. Test whether the kernel of 
$a \mapsto S_0^{(2)}(a,\cdot ) \in E^\ast$
is 1-dimensional.
\item
Set $n=3$. Select an element $\alpha\in E\setminus\{0\}$. Test $S^{(3)}(\alpha^{\otimes 3}) = 0$.
\item
Loop through the following two steps.
\begin{enumerate}
\item
Set $n:=n+1$. Determine $F^{(n-2)}(0) \in \{{s} \alpha\, | \, {s}  \in \R \}^{\bot_E}$ using
\begin{equation*}
\forall \xi \in \{{s}  \alpha\, | \, {s}  \in \R \}^{\bot_E} :
\end{equation*}
%\begin{equation*}
%\hspace{-1cm}
%\sum_{k=1}^{n-2 }
%\sum \frac{ (n-2) !}{{\bf j!}}
%%\D^{(k+1)}S(0)
%S^{(k+1)}_0
%\left(\xi \otimes \bigotimes_{l=1}^{n-k-1}
%\frac {\left( \left.\frac{\d^{l}}{\d x^{l}}\right|_{x=0}(\alpha x + F(x))\right)^{\otimes j_l}} {{(l!)}^{j_l}}
%\right)=0.
%\end{equation*}
\begin{equation*}
\hspace{-1cm}
\sum_{k=1}^{n-2 }
{\sum_{j\in\mathcal{J}_k^{n-2}} \frac{ (n-2) !}{{ j!}}}
%\D^{(k+1)}S(0)
S^{(k+1)}_0
\left(\xi \otimes \bigotimes_{l=1}^{n-k-1}
\frac {\left( \left.\frac{\d^{l}}{\d s^{l}}\right|_{s=0}(s\alpha + F(s))\right)^{\otimes j_l}} {{(l!)}^{j_l}}
\right)=0.
\end{equation*}
%In the above formula
%\textcolor{red}{we consider the multi-index $j=(j_1,\ldots,j_{n-k-1})\in \mathcal{J}_k^{n-2}$ with factorial

%the second summation in the above expression is subject to the constraints
%\begin{equation*}%\label{eq:ConstrForDetF2}
%j_l \in \N \setminus\{0\}, \quad  \sum_{l=1}^{n-k-1}j_l = k \quad \text{and} \quad \sum_{l=1}^{n-k-1}l \cdot j_l = n-2.
%\end{equation*}
\item
Test 
\[ \sum_{k=1}^n 
\sum_{{j\in \mathcal{J}_k^{n}}} \frac{n!}{{{ j!}}} 
S^{(k)}_0\left(\bigotimes_{l=1}^{n-k+1} \frac {\left(\left.\frac{\d^{l}}{\d s^{l}}\right|_{s=0}(s\alpha + F(s))\right)^{\otimes j_l}} {{(l!)}^{j_l}}\right)=0.\]
%Here, $\textcolor{red}{j=(j_1,\ldots,j_{n-k+1})\in \mathcal{J}_k^{n}}$ with $\textcolor{red}{{ j!}} = j_1 ! j_2 ! \ldots j_{n-k+1}!$ where $\textcolor{red}{\mathcal{J}_k^n}$ is defined above.
% and the second summation in the above expression is subject to
%\begin{equation*}%\label{eq:ConstrForDetF2}
%j_l \in \N \setminus\{0\}, \quad \sum_{l=1}^{n-k+1}j_l = k \quad \text{and} \quad \sum_{l=1}^{n-k+1}l \cdot j_l = n.
%\end{equation*}
\end{enumerate}
\end{itemize}
The following statements hold true.
\begin{itemize}
\item
The algorithm returns $n=1$ if and only if $0$ is not a critical point of $S$.
\item
The algorithm returns $n=2$ or does not terminate if and only if $0$ is a critical point of $S$ but $S$ does not have an $A$-series singularity at $0$.
\item
If  \cref{as:T1} and \cref{as:T2} are satisfied w.r.t.\ $\langle \cdot , \cdot \rangle_E$ then the algorithm returns $n \ge 3$ if and only if $S$ has a singularity of type $A_{n-1}$.
\end{itemize}
\end{theorem}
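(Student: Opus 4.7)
The strategy is to apply \cref{thm:SplittingLemmaInfinte} to reduce to a one-dimensional Taylor analysis, then identify the two formulas in the loop as (a) Faà di Bruno's expansion of $\tilde r^{(n)}(0)$ and (b) the implicit-function relation that determines $\tilde F^{(n-2)}(0)$.

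I first dispatch the two pre-loop tests. The $n=1$ test fails exactly when $0$ is not a critical point of $S$. If it passes, $S_0^{(2)}$ is a symmetric bilinear form on $E$; the $n=2$ test asks whether the kernel of the associated map $E\to E^{\ast}$ is one-dimensional, which under \cref{as:T1} coincides with $\dim\ker T=1$, the kernel condition in \cref{def:ASing}. If this condition fails then \cref{def:ASing} cannot apply and $S$ is not $A$-series. So assume both pre-loop tests pass and \cref{as:T1} and \cref{as:T2} hold: fix $\alpha\in\ker T\setminus\{0\}$ and note that symmetry of $T$ together with $E=\ker T\oplus\rg T$ forces this to be an orthogonal splitting, hence $\alpha^{\bot_E}=\rg T$ and $T|_{\rg T}$ is a bijection onto $\rg T$. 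Now apply \cref{thm:SplittingLemmaInfinte} and set $\tilde F(s):=F(s\alpha)\in\rg T$, $\gamma(s):=s\alpha+\tilde F(s)$, $\tilde r(s):=r(s\alpha)=S(\gamma(s))$. The properties $F(0)=0$ and $\D F(0)=0$ give $\gamma'(0)=\alpha$ and $\gamma^{(l)}(0)=\tilde F^{(l)}(0)\in\rg T$ for $l\geq 2$, matching the initialisation of the function symbol $F$ used in the algorithm. By \cref{def:ASing} and the one-dimensionality of $\ker T$, the singularity is of type $A_{n-1}$ iff $\tilde r^{(k)}(0)=0$ for $k=3,\ldots,n-1$ and $\tilde r^{(n)}(0)\neq 0$.

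The second test of the loop is exactly Faà di Bruno's formula for $\tilde r^{(n)}(0)=(S\circ\gamma)^{(n)}(0)$, with $\mathcal{J}_k^n$ indexing the partitions that enter the $(n,k)$-Bell polynomial; the $n=3$ branch is the degenerate case where no implicit-function step is needed and a direct computation using $S_0^{(1)}=0$ and $\alpha\in\ker T$ yields $\tilde r^{(3)}(0)=S_0^{(3)}(\alpha^{\otimes 3})$. For the first equation of the loop, differentiate $\nabla_y S(\gamma(s))\equiv 0$ from \eqref{eq:defF} exactly $n-2$ times, pair with an arbitrary $\xi\in\rg T$, and use the identity $\langle\D^{m}(\nabla_y S)(u)(h_1,\ldots,h_m),\xi\rangle_E=\D^{m+1}S(u)(\xi,h_1,\ldots,h_m)$ for $\xi\in\rg T$, which follows inductively from \cref{as:T2}. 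The resulting Faà di Bruno expansion of $\nabla_y S\circ\gamma$ then coincides with the stated first equation. In that equation the only term containing the unknown $\tilde F^{(n-2)}(0)$ corresponds to $k=1$ with $j_{n-2}=1$, and evaluates to $\langle T\tilde F^{(n-2)}(0),\xi\rangle_E$; bijectivity of $T|_{\rg T}$ therefore uniquely determines $\tilde F^{(n-2)}(0)\in\rg T$ from the previously computed derivatives $\tilde F^{(l)}(0)$, $l<n-2$, starting from $\tilde F^{(0)}(0)=\tilde F^{(1)}(0)=0$.

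Finally, in the Faà di Bruno expansion of $\tilde r^{(n)}(0)$ the high-order tensors $\gamma^{(n)}(0)$ and $\gamma^{(n-1)}(0)$ appear only with factors $S_0^{(1)}(\cdot)$ and $S_0^{(2)}(\alpha,\cdot)=\langle T\alpha,\cdot\rangle_E$, both of which vanish, so the second-step sum really depends only on $\tilde F^{(l)}(0)$ for $l\leq n-2$ and is computable from the data produced by the first step. Combining these observations with the induction closes the proof of the three claimed equivalences. I expect the main bookkeeping obstacle to be the verification that $\mathcal{J}_k^n$ in \eqref{eq:BellPolyIndexSet} is indeed the index set of the $(n,k)$-Bell polynomial, and that the truncation $l=1,\ldots,n-k-1$ inherited from $\mathcal{J}_k^{n-2}$ in the first equation correctly confines the unknown derivatives of $F$ to order $\leq n-2$, so the inductive recursion is genuinely well posed.
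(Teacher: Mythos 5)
Your proposal is correct and follows essentially the same route as the paper's proof: reduce to the one-dimensional map $r(s)=S(s\alpha+F(s))$ via \cref{thm:SplittingLemmaInfinte}, identify the second loop formula as Fa\`a di Bruno's expansion of $r^{(n)}(0)$ (\cref{prop:DerivativesBellPoly}), observe via the Bell-polynomial combinatorics that $\gamma^{(n)}(0)$ and $\gamma^{(n-1)}(0)$ drop out so only the $(n-2)$-jet of $F$ enters (\cref{cor:ReqJet}), and obtain the recursion for $F^{(n-2)}(0)$ by repeatedly differentiating the implicit relation and using bijectivity of $T|_{\rg T}$ (\cref{lem:determineF0}). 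The only cosmetic difference is that you differentiate $\nabla_y S(\gamma(s))\equiv 0$ and then pair with $\xi\in\rg T$, whereas the paper differentiates the scalar-valued expression $S^{(1)}(s\alpha+F(s))(\xi)$ directly; by \cref{as:T2} these are the same computation.
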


Before proving the theorem, let us formulate some corollaries which illustrate how the conditions in \cref{thm:ASeriesDetect} simplify for small values of $n$.

\begin{definition}
We say that a functional $S$ fulfilling \cref{as:T1} and \cref{as:T2} has a singularity {\em of type at least $A_n$} if
\begin{itemize}
\item
it has a singularity of type $A_N$ with $N \ge n$ or
\item
the algorithm in \cref{thm:ASeriesDetect} does not terminate.
\end{itemize}
\end{definition}

\begin{corollary}[Fold ($A_2$)]\label{prop:fold}
Let $E$ be a Banach space and $S$ be a (nonlinear) smooth functional defined on an open neighbourhood of $0 \in E$. Assume that \cref{as:T1} and \cref{as:T2} hold for a Fredholm operator $T$ such that
\begin{equation}\label{eq:foldT}
\ker T = \mathrm{span}_\R \{\alpha\}
\end{equation}
is satisfied for a non-trivial element $\alpha \in E$. Then $S$ has a singularity of type at least $A_2$ (fold singularity).
\end{corollary}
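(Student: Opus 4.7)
The plan is to reduce the claim directly to the algorithm in \cref{thm:ASeriesDetect}: I want to show that the algorithm (run with the inner product $\langle\cdot,\cdot\rangle_E$ supplied by \cref{as:T1}) passes the $n=1$ and $n=2$ tests, so it returns an integer $n\ge 3$ or fails to terminate. By the classification in \cref{thm:ASeriesDetect} together with the definition of ``at least $A_n$'' stated immediately before the corollary, this is exactly the assertion that $S$ has a singularity of type at least $A_2$.

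For the $n=1$ test ($S_0^{(1)}=0$), \cref{as:T2} combined with $\nabla_y S(0)=0$ gives $\D S(0) v = 0$ for every $v\in\rg T$. It remains to argue that $\D S(0)$ also annihilates $\ker T$. For this I would invoke \cref{thm:SplittingLemmaInfinte}: its conclusion produces the normal form $\tfrac{1}{2}\langle T\bar y,\bar y\rangle + r(\bar x)$ with $\D r(0)=0$, and evaluating the Fr\'echet derivative of this normal form at the origin yields $\D S(0)=0$ in the transformed coordinates; since the fibred change of coordinates fixes the origin, $\D S(0)=0$ also in the original coordinates. Thus $S_0^{(1)}=0$ and the $n=1$ test passes.

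For the $n=2$ test I compute $S_0^{(2)}(a,b)=\langle Ta,b\rangle_E$ using \cref{as:T1}. The kernel of $a\mapsto S_0^{(2)}(a,\cdot)\in E^\ast$ is therefore $\{a\in E : \langle Ta,b\rangle_E = 0 \text{ for all } b\in E\}$; because $\langle\cdot,\cdot\rangle_E$ is (positive-definite and hence) non-degenerate, this forces $Ta=0$, so the kernel coincides with $\ker T$. By hypothesis \eqref{eq:foldT} this space equals $\mathrm{span}_\R\{\alpha\}$, which is one-dimensional, so the test passes. Consequently the algorithm proceeds past $n=2$, and the corollary follows.

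The only subtle point is step two of the first paragraph, namely reading off $\D S(0)=0$ from the assumptions: \cref{as:T2} alone produces only the $\rg T$ component of the vanishing, and a short appeal to the Splitting Lemma's normal form is needed to recover the $\ker T$ component. Beyond this the proof is a direct translation of the hypothesis $\dim\ker T=1$ into the language of the algorithm, so I do not expect any genuine obstacles.
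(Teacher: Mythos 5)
Your approach is the same one the paper uses (the paper dispatches \cref{prop:fold} in a single sentence by appealing to \cref{thm:ASeriesDetect} via \cref{cor:derr,cor:implDerr}); your write-up simply fills in the intermediate steps. The $n=2$ test is handled correctly: $S_0^{(2)}(a,b)=\langle Ta,b\rangle_E$, the inner product is positive-definite, so the kernel of the map $a\mapsto S_0^{(2)}(a,\cdot)$ is exactly $\ker T$, which is one-dimensional by hypothesis.

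You are right that the $n=1$ test is the subtle point, and worth flagging one nuance in how you resolve it. \Cref{as:T2} supplies $\D S(0)|_{\rg T}=0$ but says nothing about $\D S(0)|_{\ker T}$. You obtain the missing piece by invoking the Splitting Lemma's conclusion $\D r(0)=0$; that deduction is formally valid from \cref{thm:SplittingLemmaInfinte} \emph{as literally stated}, but note that the conclusion $\D r(0)=0$ already presupposes that $0$ is a critical point of $S$: a simple finite-dimensional example such as $S(x,y)=x+\tfrac12 y^2$ on $\R^2$ satisfies \cref{as:T1} and \cref{as:T2} with $T=\mathrm{diag}(0,1)$ and $\ker T$ one-dimensional, yet $\D S(0)=(1,0)\ne 0$ and $r(x)=x$, so the algorithm terminates at $n=1$. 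In other words, $\D S(0)=0$ is an implicit hypothesis of the Splitting Lemma (and hence of \cref{prop:fold}) rather than a derived fact, so the appeal to the lemma's conclusion is a touch circular. Since the paper itself carries this imprecision — the $n=1$ test in \cref{thm:ASeriesDetect} is precisely the acknowledgement that one must additionally check $S_0^{(1)}=0$ — your argument is consistent with the intended reading, and the rest of the proof is correct.
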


\begin{corollary}[Cusp ($A_3$)]\label{prop:HilbertCusp}
Let $E$ be a Banach space and $S$ be a smooth functional defined on an open neighbourhood of $0 \in E$. Assume that \cref{as:T1} and \cref{as:T2} hold for a Fredholm operator $T$ and $\alpha \in E \setminus \{0\}$ such that \eqref{eq:foldT} holds, i.e.\ $S$ has a singularity of type at least $A_2$. The functional $S$ has a singularity of type at least $A_3$ if and only if
\begin{equation}\label{eq:CuspEasy}
S^{(3)}_0(\alpha,\alpha,\alpha)=0.
\end{equation}
\end{corollary}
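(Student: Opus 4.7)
The plan is to prove this corollary directly from the Splitting Lemma (\cref{thm:SplittingLemmaInfinte}) and \cref{def:ASing}, by computing the third Taylor coefficient of the reduced function $r$ at the origin and showing that it equals $S_0^{(3)}(\alpha^{\otimes 3})$. Under \cref{as:T1}, \cref{as:T2}, and the hypothesis \eqref{eq:foldT}, the Splitting Lemma furnishes a smooth function $r$ on a neighbourhood of $0\in\ker T=\mathrm{span}_\R\{\alpha\}$ with $r(0)=\D r(0)=\D^2 r(0)=0$ and $r(x)=S(x,F(x))$, where $F$ is the implicit function of \eqref{eq:defF} satisfying $F(0)=0$ and $\D F(0)=0$. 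By \cref{def:ASing}, $S$ has a singularity of type at least $A_3$ if and only if $\frac{\d^3}{\d s^3}\big|_{s=0} r(s\alpha)=0$.

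Next I would compute $\frac{\d^3}{\d s^3}\big|_{s=0} S(s\alpha, F(s\alpha))$ via Fa\`a di Bruno / the chain rule. Setting $\phi(s)=(s\alpha,F(s\alpha))$, so that $\phi'(0)=(\alpha,0)$ and $\phi''(0)=(0,\D^2 F(0)(\alpha,\alpha))$, three contributions arise at $s=0$: (i) the pure cubic $S_0^{(3)}((\alpha,0)^{\otimes 3})=S_0^{(3)}(\alpha,\alpha,\alpha)$; (ii) a cross term $3\,S_0^{(2)}\bigl((\alpha,0),(0,\D^2 F(0)(\alpha,\alpha))\bigr)=3\,\langle T(\alpha,0),(0,\D^2 F(0)(\alpha,\alpha))\rangle_E$, which vanishes because $T\alpha=0$ by \eqref{eq:foldT}; and (iii) a term $\D S(0)\cdot\phi'''(0)$, which vanishes because $0$ is a critical point of $S$ (as implicit in the $A$-series setup and already ensured by the $A_2$ hypothesis via \cref{prop:fold}). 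Only the pure cubic survives, giving $\frac{\d^3}{\d s^3}\big|_{s=0} r(s\alpha)=S_0^{(3)}(\alpha^{\otimes 3})$.

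Combining the two equivalences yields: $S$ has a singularity of type at least $A_3$ if and only if \eqref{eq:CuspEasy} holds. The only delicate step is the vanishing of the cross term (ii), which uses $\alpha\in\ker T$ crucially; otherwise the argument is a direct specialisation of the $n=3$ iteration of the algorithm in \cref{thm:ASeriesDetect}, and no further obstacle appears.
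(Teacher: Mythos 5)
Your proof is correct and takes essentially the same route as the paper: computing $r'''(0)$ via the chain rule (the $n=3$ instance of Fa\`a di Bruno), noting that the cross term $3\,S_0^{(2)}(\alpha,F''(0))$ dies because $\alpha\in\ker T$ and the first-order term dies because $0$ is a critical point, so only $S_0^{(3)}(\alpha^{\otimes 3})$ survives. The paper merely packages this same computation as \cref{cor:derr} of the general Bell-polynomial \cref{prop:DerivativesBellPoly} before reading off the cusp condition.
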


\begin{corollary}[Swallowtail ($A_4$)]\label{prop:SW}
\label{prop:HilbertSW}
Let $E$ be a Banach space and $S$ be a smooth functional defined on an open neighbourhood of $0 \in E$. Assume that \cref{as:T1} and \cref{as:T2} hold for a Fredholm operator $T$ and $\alpha \in E \setminus \{0\}$ such that \eqref{eq:foldT} and \eqref{eq:CuspEasy} are satisfied, i.e.\ $S$ has a singularity of type at least $A_3$. The functional $S$ has a singularity of type at least $A_4$ if and only if
\begin{equation}\label{eq:v}
S^{(2)}_0(v,\xi) = - S^{(3)}_0(\alpha,\alpha,\xi) \quad \forall \xi \in E
\end{equation}
is solvable for $v \in E$ and 
%\begin{equation}\label{eq:SWCond}
%     S^{(4)}_0(\alpha,\alpha,\alpha,\alpha)
%+6 S^{(3)}_0(\alpha,\alpha,v)
%+3 S^{(2)}_0(v,v)
%= 0.
%\end{equation}
\begin{equation}\label{eq:SWCond}
     S^{(4)}_0(\alpha,\alpha,\alpha,\alpha)
-3 S^{(2)}_0(v,v)
= 0.
\end{equation}
\end{corollary}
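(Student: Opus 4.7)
The plan is to derive this corollary as the specialization of Theorem~\ref{thm:ASeriesDetect} to $n=4$. By hypothesis, $S$ already has a singularity of type at least $A_3$, so the tests at $n=1,2,3$ pass and we need only execute one iteration of the loop (with $n=4$), which consists of two pieces: existence of $F^{(2)}(0) \in \rg T$ solving the implicit-function equation in step~(i), and the vanishing of $r^{(4)}(0)$ as spelled out in step~(ii).

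First I would unpack step~(i) for $n-2=2$. The relevant index sets are $\mathcal{J}_1^2 = \{(0,1)\}$ and $\mathcal{J}_2^2 = \{(2,0)\}$, and using $F^{(1)}(0) = 0$ the equation collapses to
\[
S^{(2)}_0(\xi, F^{(2)}(0)) + S^{(3)}_0(\xi, \alpha, \alpha) = 0 \quad \forall \xi \in \rg T.
\]
Since $T$ is symmetric, $\rg T = (\ker T)^{\perp_E}$. I would then show that solvability of this restricted equation is equivalent to solvability of \eqref{eq:v} on all of $E$: if $v \in E$ solves \eqref{eq:v}, decomposing $v = s_0\alpha + v_\perp$ with $v_\perp \in \rg T$ and using $S^{(2)}_0(\alpha,\cdot)=0$ shows $v_\perp$ satisfies the algorithm's equation, so $F^{(2)}(0)=v_\perp$; conversely, a solution $F^{(2)}(0) \in \rg T$ automatically satisfies \eqref{eq:v} for $\xi = \alpha$ because both sides vanish (the right side by the cusp condition \eqref{eq:CuspEasy}, the left by $\alpha \in \ker T$).

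Next I would apply step~(ii), which is Faà di Bruno's formula for $r^{(4)}(0)$. The relevant multi-indices in $\mathcal{J}_k^4$ are $(0,0,0,1)$ for $k=1$, $(1,0,1)$ and $(0,2,0)$ for $k=2$, $(2,1)$ for $k=3$, and $(4)$ for $k=4$. The $k=1$ term vanishes because $S^{(1)}_0 = 0$, and the $(1,0,1)$ contribution vanishes because $S^{(2)}_0(\alpha,\cdot) = 0$. Writing $w := F^{(2)}(0)$ and collecting the coefficients from $\frac{4!}{j!(l!)^{j_l}}$, the surviving terms give
\[
r^{(4)}(0) = S^{(4)}_0(\alpha^{\otimes 4}) + 6\, S^{(3)}_0(\alpha,\alpha,w) + 3\, S^{(2)}_0(w,w).
\]
Substituting $\xi = w \in \rg T$ into the equation for $F^{(2)}(0)$ rewrites the middle term as $-6\,S^{(2)}_0(w,w)$, yielding $r^{(4)}(0) = S^{(4)}_0(\alpha^{\otimes 4}) - 3\, S^{(2)}_0(w,w)$. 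Finally, the decomposition $v = s_0\alpha + v_\perp$ gives $S^{(2)}_0(v,v) = S^{(2)}_0(v_\perp,v_\perp) = S^{(2)}_0(w,w)$ since the $\ker T$ component is annihilated by $T$, so $r^{(4)}(0)=0$ is exactly condition \eqref{eq:SWCond}.

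The calculations themselves are routine once the index sets of Faà di Bruno are written out; the only point requiring care is the bidirectional equivalence between the algorithm's restricted equation on $\rg T$ and the ambient-space equation \eqref{eq:v}, where one must use both the cusp identity $S^{(3)}_0(\alpha^{\otimes 3})=0$ and the vanishing of $S^{(2)}_0(\alpha,\cdot)$ to move between the two formulations without loss.
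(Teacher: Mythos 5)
Your proposal is correct and follows essentially the same route the paper takes: the paper derives the corollary from the explicit $r^{(4)}(0)$ formula in \cref{cor:derr} and the $n=2$ relation in \cref{cor:implDerr}, then simplifies via \eqref{eq:v}, and the invariance under replacing $F''(0)$ by any $v$ solving \eqref{eq:v} on all of $E$ is exactly the content of \cref{rem:uniquenessnotnecessary}. You have merely spelled out the Fa\`a di Bruno index-set bookkeeping and the $\rg T$ versus $E$ equivalence in more detail than the paper's terse one-line reference to its corollaries.
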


\begin{corollary}[Butterfly ($A_5$)]\label{prop:butterfly}
Let $E$ be a Banach space and $S$ be a smooth functional defined on an open neighbourhood of $0 \in E$.
Assume that \cref{as:T1} and \cref{as:T2} hold for a Fredholm operator $T$ and $\alpha \in E \setminus \{0\}$ such that \eqref{eq:foldT} and \eqref{eq:CuspEasy} are satisfied.
Furthermore, assume that \eqref{eq:v} holds for some $v \in E$ and \eqref{eq:SWCond} is satisfied, i.e.\ the functional $S$ has a singularity of type at least $A_4$.
The functional $S$ has a singularity of type at least $A_5$ if and only if
\begin{equation}\label{eq:w}
S^{(2)}_0(w,\xi) = - S^{(4)}_0 (\alpha,\alpha,\alpha,\xi)-3S^{(3)}_0(\alpha,v,\xi) \quad \forall \xi \in E
\end{equation}
is solvable for $w \in E$ and 
%\begin{align*}
%S^{(5)}_0(\alpha^{\otimes 5})
%+5\left(
%2 S^{(4)}_0(\alpha^{\otimes 3} \otimes v)
%+2 S^{(3)}_0(\alpha,\alpha,w)
%+3 S^{(3)}_0(\alpha,v,v)
%+2 S^{(2)}_0(w,v)\right)
%\end{align*}
%\begin{align*}
%S^{(5)}_0(\alpha^{\otimes 5})
%+5\left(
% S^{(4)}_0(\alpha^{\otimes 3} \otimes v)
%+2 S^{(3)}_0(\alpha,\alpha,w)
%+ S^{(2)}_0(w,v)\right)=0.
%\end{align*}
\[
S^{(5)}_0 (\alpha^{\otimes 5}) - 15 S^{(3)}_0 (\alpha ,v,v) + 10 S^{(3)}_0 (\alpha,\alpha,w)=0.
\]
\end{corollary}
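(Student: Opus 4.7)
The plan is to invoke \cref{thm:ASeriesDetect} at $n=5$. Since $S$ is assumed to have a singularity of type at least $A_4$, the tests up to $n=4$ have already passed, so it only remains to run step (1) (which determines $F^{(3)}(0)$) and step (2) (the butterfly test) at $n=5$, and to translate both into the form stated in the corollary.

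The first substep is to identify $v$ and $w$ with the Taylor coefficients $F''(0)$ and $F'''(0)$ of the implicit map $F$ from \eqref{eq:defF}. At the swallowtail stage, step (1) at $n=4$ produces the equation $S^{(2)}_0(\xi,F''(0))+S^{(3)}_0(\xi,\alpha,\alpha)=0$ for all $\xi\in\rg T$, which is precisely \eqref{eq:v} by symmetry of $S^{(2)}_0$ and $S^{(3)}_0$; extension to $\xi=\alpha$ is automatic because $\alpha\in\ker T$ forces $S^{(2)}_0(\alpha,\cdot)=0$ and the cusp condition gives $S^{(3)}_0(\alpha^{\otimes 3})=0$. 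The analogous computation at $n=5$ identifies $w:=F'''(0)\in\rg T$ as a solution of \eqref{eq:w}, with consistency at $\xi=\alpha$ secured by the $A_4$ condition \eqref{eq:SWCond}. The solutions of \eqref{eq:v} and \eqref{eq:w} are only unique up to the compatible shifts $v\mapsto v+c\alpha$, $w\mapsto w+3cv+d\alpha$, and one checks directly that the combination in the corollary is invariant under these shifts, so the test equation is well posed.

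Next I would expand step (2) at $n=5$ explicitly by enumerating the multi-indices $j\in\mathcal{J}_k^5$ for $k=1,\ldots,5$ and computing the Fa\`a-di-Bruno-type weights $n!/(j!\prod_l (l!)^{j_l})$. Two systematic cancellations collapse the expansion: $S^{(1)}_0=0$ at a critical point kills the $k=1$ summand, and $S^{(2)}_0(\alpha,\cdot)=0$ kills every surviving term in which the first derivative of $s\mapsto s\alpha+F(s)$ at $0$ (namely $\alpha$) appears as an argument of $S^{(2)}_0$. The residue is an explicit $\R$-linear combination of the five quantities $S^{(2)}_0(v,w)$, $S^{(3)}_0(\alpha,\alpha,w)$, $S^{(3)}_0(\alpha,v,v)$, $S^{(4)}_0(\alpha^{\otimes 3},v)$ and $S^{(5)}_0(\alpha^{\otimes 5})$.

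Finally, I would eliminate the non-canonical summands by substituting \eqref{eq:v} at $\xi=w$, which converts $S^{(2)}_0(v,w)$ into $-S^{(3)}_0(\alpha,\alpha,w)$, and \eqref{eq:w} at $\xi=v$, which expresses $S^{(4)}_0(\alpha^{\otimes 3},v)$ in terms of $S^{(3)}_0(\alpha,\alpha,w)$ and $S^{(3)}_0(\alpha,v,v)$. Collecting coefficients should then reproduce the stated identity $S^{(5)}_0(\alpha^{\otimes 5})-15S^{(3)}_0(\alpha,v,v)+10S^{(3)}_0(\alpha,\alpha,w)=0$. The principal obstacle is the combinatorial bookkeeping---enumerating the non-vanishing multi-indices and tracking their weights without arithmetic errors---while the conceptual moves mirror the swallowtail case proved earlier.
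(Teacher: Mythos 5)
Your proposal is correct and follows essentially the same route as the paper: identify $v=F''(0)$, $w=F'''(0)$ via the recursive determining equations (\cref{lem:determineF0}, \cref{cor:implDerr}), expand $r^{(5)}(0)$ by the Bell-polynomial formula (\cref{cor:derr}), and eliminate $S^{(2)}_0(v,w)$ and $S^{(4)}_0(\alpha^{\otimes 3},v)$ using \eqref{eq:v} at $\xi=w$ and \eqref{eq:w} at $\xi=v$. The only thing to note is that the "combinatorial bookkeeping" you flag as the main obstacle is already carried out and displayed in \cref{cor:derr} ($r'''''(0)= S^{(5)}_0 (\alpha^{\otimes 5}) + 10 S^{(4)}_0 (\alpha^{\otimes 3}, v) + 10 S^{(3)}_0 (\alpha^{\otimes 2}, w) + 15 S^{(3)}_0 (\alpha, v,v) + 10 S^{(2)}_0 (v, w)$), so one can simply cite it and perform the two substitutions to land on the stated coefficients $1,\,-15,\,10$.
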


\begin{remark}\label{rem:uniquenessnotnecessary}
Notice that we do not need to require $v \in \rg T$ in \eqref{eq:v} or $w \in \rg T$ in \eqref{eq:w} since $S^{(2)}_0(w+t_1\alpha,v+t_2 \alpha) = S_0^{(2)}(w,v)$ and $S^{(3)}_0(\alpha,v+t_1\alpha,\xi+t_2 \alpha) = S^{(3)}_0(\alpha,v,\xi)$ {for $\alpha\in E$ satisfying} \eqref{eq:foldT} and \eqref{eq:CuspEasy} such that the test equations are defined invariantly.
%Here we can immediately see that the value $w$ and the value of recognition term are well-defined although $B$ is only defined up to an $\mathrm{span}\,\{\alpha\}$-valued operator because of $\ker D^{(2)}S (0) = \mathrm{span}\,\{\alpha\}$ and \eqref{eq:CuspEasy}.
\end{remark}

\begin{remark}
The equations in the loop section of the algorithm presented in \cref{thm:ASeriesDetect} can be obtained from the ${(n-2)^{\mathrm{th}}}$ and $n^{\mathrm{th}}$ {\em complete exponential Bell polynomials} as we will see from \cref{prop:DerivativesBellPoly}, \cref{lem:determineF0} and \cref{rem:detF0Bell}.
\end{remark}

\begin{definition}[Complete exponential Bell polynomial]
The $n^{\mathrm{th}}$ {\em complete exponential Bell polynomial} is given as
\begin{align}\label{eq:bellpolynomial}
B_n(x_1,\ldots,x_n) 
= \sum_{k=1}^n \sum_{{j\in\mathcal{J}_k^n}} \frac{n!}{j!} \prod_{l=1}^{n-k+1} \left(\frac {x_l} {l!} \right)^{j_l},
\end{align}
with the multi-index set $\mathcal{J}_k^n$ as in \eqref{eq:BellPolyIndexSet} and $j! = j_1 ! j_2 ! \ldots j_{n-k+1}!$ for $j \in \mathcal{J}_k^n$.
%where \textcolor{red}{$j=(j_1,\ldots,j_{n-k+1})\in \mathcal{I}_k^n$ with
%\begin{align}\label{eq:cond2Sum}
%\begin{split}
%\mathcal{I}_k^{n}:=&\left\{j=(j_1,\ldots,j_{n-k+1})\colon \vphantom{\sum_{l=1}^{n-k-1}l \cdot j_l = n-2}\right.\\&\qquad\left. j_l \in \N \cup\{0\}, \quad  \sum_{l=1}^{n-k+1}j_l = k, \quad \sum_{l=1}^{n-k+1}l \cdot j_l = n\right\}.
%\end{split}
%\end{align}}
%as the second sum in the above expression is subject to the constraints
%\begin{equation}\label{eq:cond2Sum}
%j_l \in \N \cup \{0\}, \quad \sum_{l=1}^{n-k+1}j_l = k \quad \text{and} \quad \sum_{l=1}^{n-k+1}l \cdot j_l = n.
%\end{equation}
\end{definition}

The first five complete exponential Bell polynomials are given by
\begin{align}\label{eq:BellPolysMonForm}\begin{split}
B_{0}={}&1,\\
B_{1}(x_{1})={}&x_{1},\\
B_{2}(x_{1},x_{2})={}&x_{1}^{2}+x_{2},\\
B_{3}(x_{1},x_{2},x_{3})={}&x_{1}^{3}+3x_{1}x_{2}+x_{3},\\
B_{4}(x_{1},x_{2},x_{3},x_{4})={}&x_{1}^{4}+6x_{1}^{2}x_{2}+4x_{1}x_{3}+3x_{2}^{2}+x_{4},\\B_{5}(x_{1},x_{2},x_{3},x_{4},x_{5})={}&x_{1}^{5}+10x_{2}x_{1}^{3}+15x_{2}^{2}x_{1}+10x_{3}x_{1}^{2}+10x_{3}x_{2}+5x_{4}x_{1}+x_{5}.
%B_{6}(x_{1},x_{2},x_{3},x_{4},x_{5},x_{6})={}&x_{1}^{6}+15x_{2}x_{1}^{4}+20x_{3}x_{1}^{3}+45x_{2}^{2}x_{1}^{2}+15x_{2}^{3}+60x_{3}x_{2}x_{1}\\&{}+15x_{4}x_{1}^{2}+10x_{3}^{2}+15x_{4}x_{2}+6x_{5}x_{1}+x_{6}.
%B_{7}(x_{1},x_{2},x_{3},x_{4},x_{5},x_{6},x_{7})={}&x_{1}^{7}+21x_{1}^{5}x_{2}+35x_{1}^{4}x_{3}+105x_{1}^{3}x_{2}^{2}+35x_{1}^{3}x_{4}\\&{}+210x_{1}^{2}x_{2}x_{3}+105x_{1}x_{2}^{3}+21x_{1}^{2}x_{5}+105x_{1}x_{2}x_{4}\\&{}+70x_{1}x_{3}^{2}+105x_{2}^{2}x_{3}+7x_{1}x_{6}+21x_{2}x_{5}+35x_{3}x_{4}+x_{7}.
\end{split}
\end{align}

\begin{remark}
Complete exponential Bell polynomials appear as coefficients in the following formal power series.
\[
\exp\left( \sum_{k=1}^\infty \frac{x_k}{k!} y^k \right) = \sum_{n=0}^\infty \frac 1 {n!} B_n(x_1,\ldots,x_n) y^n.
\]
Moreover, the $n^{\mathrm{th}}$ complete exponential Bell polynomial encodes information on the number of ways a set containing $n$ elements can be partitioned into non-empty, disjoint subsets. For example we can read off from
\[
B_{4}(x_{1},x_{2},x_{3},x_{4})={}x_{1}^{4}+6x_{1}^{2}x_{2}+4x_{1}x_{3}+3x_{2}^{2}+x_{4}
\]
that there is
\begin{itemize}
\item
1 partition consisting of 4 sets of cardinality 1,
\item
6 partitions into 3 sets of which 2 have cardinality 1 and the other one has cardinality 2,
\item
4 partitions into 2 sets of which 1 has cardinality 1 and the other one has cardinality 3,
\item
3 partitions into 2 sets of cardinality 2,
\item
and 1 partition consisting of 1 set of cardinality 4.
\end{itemize}
(See \cite{BellPartitionPolys,brualdi2004introductory}, for instance).
\end{remark}

Let us prepare the proof of \cref{thm:ASeriesDetect} with two lemmas.

\begin{lemma}\label{prop:DerivativesBellPoly}
Let $E$ be a Banach space, $U \subset E$ an open subset and $\alpha \in E$. Consider smooth functions $S \colon U \to \R$ and $F \colon I \to E$, where $I$ is an open interval $I \subset \R$ such that $r(s):=S(s\alpha+F(s))$ is defined on $I$.
For $n \in \N$ we have
\[
r^{(n)}(s) = B_n(\alpha + F'(s),F''(s),\ldots,F^{(n)}(s)).
\]
On the right hand side of the equation multiplications are interpreted as tensor products. Moreover, the symbol ``+'' is replaced by ``$ + S^{(\mathrm{degree})}(s\alpha+F(s))$'', where $\mathrm{degree}$ is the count of factors in the tensor product to which $S^{(\mathrm{degree})}(s\alpha+F(s))$ is applied. In other words 
\[r^{(n)}(s) = 
\sum_{k=1}^n 
\sum_{{j\in \mathcal{J}_k^{n}}} \frac{n!}{j!} 
S^{(k)}(s\alpha+F(s))\left(
\bigotimes_{l=1}^{n-k+1} 
\frac {\left(\frac{\d^{l}}{\d s^{l}}(s\alpha + F(s))\right)^{\otimes j_l}} {{(l!)}^{j_l}}
\right),
\]
where $j=(j_1,\ldots,j_{n-k+1})\in \mathcal{J}_k^n$ is defined in \eqref{eq:BellPolyIndexSet} and $j!=j_1 ! j_2 ! \ldots j_{n-k+1}!$ for a multi-index $j \in \mathcal{J}_k^n$.
%whereas the second summation in the above expression is subject to the constraints \eqref{eq:cond2Sum}.
\end{lemma}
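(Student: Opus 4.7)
The plan is to recognise the identity as a version of Faà di Bruno's formula for the composition $r = S \circ g$ with $g\colon I \to E$ given by $g(s) = s\alpha + F(s)$, adapted to the setting of a scalar function of a Banach space variable. I would prove it by Taylor expansion in two slots combined with the combinatorial identity that defines the Bell polynomial, although an equivalent inductive proof using Leibniz's rule and the standard Bell-polynomial recurrence would also work.

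First I would fix $s\in I$ and expand both $g$ near $s$ and $S$ near $g(s)$ to order $n$ using Taylor's theorem:
\[
g(s+h) - g(s) = \sum_{l=1}^{n} \frac{h^l}{l!}\,g^{(l)}(s) + o(h^n),
\qquad
S(g(s)+v) = \sum_{k=0}^{n} \frac{1}{k!}\,S^{(k)}(g(s))(v^{\otimes k}) + o(\|v\|^n).
\]
Substituting the first into the second and expanding each $v^{\otimes k}$ using multilinearity of the tensor product gives a sum of terms of the form $S^{(k)}(g(s))(g^{(l_1)}(s)\otimes \cdots \otimes g^{(l_k)}(s))\,h^{l_1+\cdots+l_k}/(k!\,l_1!\cdots l_k!)$, indexed by ordered tuples $(l_1,\ldots,l_k)$ of positive integers.

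Next I would collect terms by the total power of $h$. Because $S^{(k)}(g(s))$ is a symmetric $k$-linear form, any two ordered tuples $(l_1,\ldots,l_k)$ with the same multiset of entries yield the same tensor evaluation. Parametrising multisets by the multi-index $j=(j_1,\ldots,j_{n-k+1})$ with $j_l$ equal to the number of $l_i$'s that equal $l$, the number of ordered tuples producing $j$ is $k!/j!$, and the conditions $\sum_l j_l = k$ and $\sum_l l\,j_l = l_1+\cdots+l_k = n$ are exactly those defining $\mathcal{J}_k^n$ in \eqref{eq:BellPolyIndexSet}. By uniqueness of the $n$-jet, the coefficient of $h^n$ in the expansion equals $r^{(n)}(s)/n!$, which yields
\[
\frac{r^{(n)}(s)}{n!} = \sum_{k=1}^n \sum_{j\in \mathcal{J}_k^n} \frac{1}{j!}\, S^{(k)}(g(s))\left(\bigotimes_{l=1}^{n-k+1} \frac{(g^{(l)}(s))^{\otimes j_l}}{(l!)^{j_l}}\right).
\]
Multiplying by $n!$ gives the displayed formula, and comparison with \eqref{eq:bellpolynomial} with $x_1 = g'(s) = \alpha + F'(s)$ and $x_l = g^{(l)}(s) = F^{(l)}(s)$ for $l\ge 2$ identifies the right-hand side with $B_n(\alpha + F'(s), F''(s), \ldots, F^{(n)}(s))$ under the stated substitution rules.

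The main obstacle is purely combinatorial: the careful bookkeeping of the factor $k!/j!$ counting ordered realisations of an unordered multi-index, and the reindexing that turns the double sum over ordered tuples into the double sum over $k$ and $j \in \mathcal{J}_k^n$. The Banach-space aspect is not an obstacle, as $S^{(k)}(g(s))$ is a bounded symmetric $k$-linear form acting on tensor products of elements of $E$ by the standard identification.
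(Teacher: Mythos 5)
Your proof is correct, but it takes a genuinely different route from the paper. The paper's proof is essentially a citation: it observes that the stated identity is Fa\`a di Bruno's formula and that the chain rule holds for Fr\'echet derivatives, so the usual proof (which is inductive, differentiating $n$ times and using the Bell-polynomial recurrence) carries over verbatim from $E = \R$ to a general Banach space. You instead give a direct, self-contained derivation by composing the Taylor expansions of $g(s+h)$ around $h=0$ and of $S$ around $g(s)$, then matching the coefficient of $h^n$ via uniqueness of the $n$-jet and the combinatorial count $k!/j!$ of ordered tuples realising a given multi-index $j \in \mathcal{J}_k^n$. The coefficient bookkeeping you describe is correct: each ordered tuple $(l_1,\ldots,l_k)$ contributes $\frac{1}{k!}\cdot\frac{1}{l_1!\cdots l_k!}$, the $k!/j!$ tuples per multiset kill the $1/k!$, and multiplying by $n!$ gives the stated $n!/j!$ prefactor. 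What the paper's route buys is brevity and a reduction to a classical result; what your route buys is an explicit, reference-free argument that also makes transparent exactly where the symmetry of $S^{(k)}$ and the multi-index conditions $\sum_l j_l = k$, $\sum_l l\,j_l = n$ enter. One small caveat worth noting if you were to write this up fully: when you substitute the inner Taylor expansion $v = g(s+h)-g(s)$ into the outer one, you should say a word about why the two remainder terms combine into a single $o(h^n)$, using $\|v\| = O(|h|)$ and the smoothness of $r$; this is routine but is the one place where a careless reader could think the argument only yields a formal identity.
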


\begin{corollary}\label{cor:derr}
In the setting of \cref{prop:DerivativesBellPoly},
if $0 \in U$ and $S_0=0$, $S_0^{(1)}=0$, $S_0^{(2)}(\alpha,\xi)=0$ for all $\xi \in E$ and $F(0)=F'(0)=0$ then the first five derivatives of $r$ evaluated at 0 are given by
\begin{align*}
r(0)&=0\\
r'(0)&=0\\
r''(0)&=0\\
r'''(0) &= S^{(3)}_0 (\alpha^{\otimes 3} ) \\
r''''(0) &= S^{(4)}_0 (\alpha^{\otimes 4} ) + 6 S^{(3)}_0 (\alpha^{\otimes 2} \otimes F''(0))
	   + 3 S^{(2)}_0 (F''(0)^{\otimes 2}) \\
r'''''(0)&= S^{(5)}_0 (\alpha^{\otimes 5}) 
	   + 10 S^{(4)}_0 (\alpha^{\otimes 3} \otimes F''(0))
   	   + 10 S^{(3)}_0 (\alpha^{\otimes 2} \otimes F'''(0))\\
   	   &+ 15 S^{(3)}_0 (\alpha \otimes (F''(0))^{\otimes 2})
   	   + 10 S^{(2)}_0 (F''(0)\otimes F'''(0) ).
\end{align*}
\end{corollary}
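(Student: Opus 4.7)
The plan is to substitute $s=0$ directly into the general formula of Proposition~\ref{prop:DerivativesBellPoly}, read off the right-hand sides from the explicit Bell polynomials $B_0,\ldots,B_5$ in~\eqref{eq:BellPolysMonForm}, and then discard the summands that are killed by the vanishing hypotheses.

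First I would record the effect of setting $s=0$. The argument $s\alpha+F(s)$ collapses to $F(0)=0$, so every occurrence of $S^{(k)}(s\alpha+F(s))$ reduces to $S^{(k)}_0$. The first derivative $\left.\tfrac{d}{ds}\right|_{s=0}(s\alpha+F(s))=\alpha+F'(0)$ reduces to $\alpha$ because $F'(0)=0$, while the higher derivatives are simply $F^{(l)}(0)$ for $l\geq 2$. Under the substitution convention of Proposition~\ref{prop:DerivativesBellPoly} this amounts to $x_1\mapsto\alpha$ and $x_l\mapsto F^{(l)}(0)$ for $l\geq 2$.

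Next I would mechanically transcribe each $B_n$ from~\eqref{eq:BellPolysMonForm}: every monomial $x_1^{a_1}\cdots x_m^{a_m}$ of total degree $k=\sum_i a_i$ becomes $S^{(k)}_0$ applied to the corresponding tensor product, with the multiplicity prescribed by the coefficient in the Bell polynomial. For example, $B_3 = x_1^3 + 3x_1 x_2 + x_3$ yields at once
\[
r'''(0) = S^{(3)}_0(\alpha^{\otimes 3}) + 3 S^{(2)}_0(\alpha\otimes F''(0)) + S^{(1)}_0(F'''(0)),
\]
and the analogous transcriptions for $B_2$, $B_4$ and $B_5$ are equally routine.

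The final step is pruning. Every term of the form $S^{(1)}_0(\cdot)$ vanishes by the hypothesis $S^{(1)}_0=0$, and every term in which $S^{(2)}_0$ is applied to a tensor containing at least one factor $\alpha$ vanishes by $S^{(2)}_0(\alpha,\xi)=0$ for all $\xi\in E$. What survives coincides with the expressions listed in the statement. There is no conceptual obstacle; the only thing demanding care is identifying which $S^{(2)}_0$-monomials in the $B_5$ expansion contain an $\alpha$-factor. For instance, the contribution of $10 x_2 x_3$ survives as $10 S^{(2)}_0(F''(0)\otimes F'''(0))$, since it involves no $\alpha$, whereas $5 x_1 x_4$ is killed; similarly in $B_4$ the $3 x_2^2$ contribution survives while $4 x_1 x_3$ dies.
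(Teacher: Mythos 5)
Your proposal is correct and is exactly the argument the paper intends: the corollary is stated without a separate proof precisely because it follows by substituting $s=0$ into the Bell-polynomial formula of Proposition~\ref{prop:DerivativesBellPoly}, reading off the monomial forms of $B_0,\ldots,B_5$ from~\eqref{eq:BellPolysMonForm} with $x_1\mapsto\alpha$ and $x_l\mapsto F^{(l)}(0)$ for $l\geq 2$, and discarding the $S^{(1)}_0$-terms and the $S^{(2)}_0$-terms containing an $\alpha$-factor, which is what you do.
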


\begin{proof}[Proof of \cref{{prop:DerivativesBellPoly}}]
The statement is an extension of Fa\`a di Bruno's formula \cite[\S 12.3]{andrews_1984} from $E=\R$ to arbitrary Banach spaces $E$.
Since the chain rule for differentiation is valid for Fr\'echet derivatives \cite[Thm.\ 2.47]{Penot2013}, the proof is analogous.
\end{proof}

As we see explicitly for $i\le 5$ in \cref{cor:derr}, to determine $r^{(i)}(0)$ the $i-2$-jet of $F$ is required at 0 in the setting of \cref{cor:derr}. This holds in general.

\begin{corollary}\label{cor:ReqJet}
In the setting of \cref{cor:derr} the value $r^{(i)}(0)$ can be calculated from the $i-2$-jet of $F$ where $i \in \N$ with $i \ge 2$. We can write
\[
r^{(i)}(0) = B_i(\alpha ,F''(0),\ldots,F^{(i-2)}(0),c_2,c_1),
\]
where $c_1,c_2$ are any constants. On the right hand side of the equation multiplications are interpreted as tensor products. Moreover, the symbol ``+'' is replaced by ``$ + S^{(\mathrm{degree})}_0$'', where $\mathrm{degree}$ is the count of factors in the tensor product to which $S^{(\mathrm{degree})}_0$ is applied.
\end{corollary}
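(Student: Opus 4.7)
The plan is to derive the claim directly from Lemma \ref{prop:DerivativesBellPoly} by substituting $s=0$ and then identifying which terms in the Bell polynomial expansion involve $F^{(i-1)}(0)$ or $F^{(i)}(0)$, and showing that each such term vanishes under the hypotheses of Corollary \ref{cor:derr}.

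First I would apply Lemma \ref{prop:DerivativesBellPoly} at $s=0$. Since $F'(0)=0$, this gives
\[
r^{(i)}(0) = B_i(\alpha, F''(0), F'''(0), \ldots, F^{(i)}(0))
\]
with the stated convention (products interpreted as tensor products, $+$ replaced by $+S^{(\text{degree})}_0$ applied to the corresponding tensor). The task is then to show that the summands in which $F^{(i-1)}(0)$ or $F^{(i)}(0)$ appears all vanish, so that these two entries can be replaced by arbitrary $c_2, c_1$.

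Next I would carry out a combinatorial analysis of the multi-index set $\mathcal{J}_k^i$ defined in \eqref{eq:BellPolyIndexSet} to isolate the relevant summands. A multi-index $j \in \mathcal{J}_k^i$ with $j_i \geq 1$ forces $i \cdot j_i \leq i$, hence $j_i = 1$ and all other $j_l = 0$; thus $k=1$, and after substitution this summand becomes $S^{(1)}_0(F^{(i)}(0))$, which is $0$ by the hypothesis $S^{(1)}_0 = 0$. Similarly, a multi-index with $j_{i-1} \geq 1$ forces (for $i \geq 3$) exactly $j_{i-1} = 1$ together with $j_1 = 1$ and all other $j_l = 0$, so $k = 2$ and the corresponding tensor contains $\alpha$ as one factor and $F^{(i-1)}(0)$ as the other; after substitution it becomes a multiple of $S^{(2)}_0(\alpha \otimes F^{(i-1)}(0))$, which vanishes by the hypothesis $S^{(2)}_0(\alpha, \xi)=0$ for all $\xi \in E$. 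The degenerate cases $i = 2$ and the possibility of simultaneous occurrence of $x_{i-1}$ and $x_i$ (which would require $2i - 1 \leq i$) must be handled separately but are immediate.

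The consequence is that every summand in $B_i(\alpha, F''(0), \ldots, F^{(i)}(0))$ involving $F^{(i-1)}(0)$ or $F^{(i)}(0)$ evaluates to $0$ after applying the substitution convention. Therefore $r^{(i)}(0)$ depends only on $\alpha, F''(0), \ldots, F^{(i-2)}(0)$, and replacing the last two slots of $B_i$ by arbitrary constants $c_2, c_1$ does not change the value of the expression. This yields the identity
\[
r^{(i)}(0) = B_i(\alpha, F''(0), \ldots, F^{(i-2)}(0), c_2, c_1),
\]
and in particular confirms that the $(i-2)$-jet of $F$ at $0$ suffices to determine $r^{(i)}(0)$. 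The only step requiring care is the combinatorial verification that no other multi-indices in $\mathcal{J}_k^i$ can contribute factors of $F^{(i-1)}(0)$ or $F^{(i)}(0)$; once this is established, the rest of the argument is a direct evaluation.
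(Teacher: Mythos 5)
Your proof is correct and takes essentially the same route as the paper: the paper isolates the offending terms via the partition interpretation of the Bell polynomial (a subset of size $i$ forces a singleton partition, a subset of size $i-1$ forces exactly one companion singleton, hence the pairing with $\alpha$), while you derive the identical conclusion directly from the constraints defining $\mathcal{J}_k^i$; both arguments then kill the terms using $S^{(1)}_0=0$ and $S^{(2)}_0(\alpha,\cdot)=0$.
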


\begin{proof}
We use the combinatorial interpretation of Bell polynomials and \cref{prop:DerivativesBellPoly}.
If a partition of an $i$-set contains a subset with $i-1$ elements then there must be exactly one more non-empty subset containing one element to form a valid partition. Therefore, $F^{(i-1)}(0) = \left.\frac{\d^{i-1}}{\d s^{i-1}}\right|_0(s\alpha + F(s)) $ only occurs together with $\alpha = \left.\frac{\d^{1}}{\d s^{1}}\right|_0(s\alpha + F(s))$ as $i F^{(i-1)}(0) \otimes \alpha$. This terms becomes an input argument of $S^{(2)}_0$ and, therefore, vanishes by the choice of $\alpha$. If a partition of an $i$-set contains a subset with $i$ elements then there cannot be another non-empty subset in the partition. Therefore, $F^{(i)}(0) = \left.\frac{\d^{i}}{\d s^{i}}\right|_0(s\alpha + F(s))$ becomes an input argument of $S^{(1)}_0$ which is zero.
\end{proof}

%
%\begin{align*}
%r(x)&=S(x\alpha+F(x))\\
%r'(x) &= \D S (x\alpha+F(x)) (\alpha + F'(x))\\
%r''(x) &= \D^2 S (x\alpha+F(x)) (\alpha + F'(x))^{\otimes 2} \\
%	  &+ \D S (x\alpha+F(x)) (F''(x))\\
%r'''(x) &= \D^3 S (x\alpha+F(x)) (\alpha + F'(x))^{\otimes 3} \\
%		&+ 3 \D^2 S (x\alpha+F(x)) ((\alpha+F'(x)) \otimes F''(x))\\
%		&+ \D S (x\alpha+F(x)) (F'''(x))\\
%r''''(x) &= \D^4 S (x\alpha+F(x)) (\alpha + F'(x))^{\otimes 4} \\
%	   &+ 6 \D^3 S (x\alpha+F(x)) ((\alpha+F'(x))^{\otimes 2} \otimes F''(x))\\
%	   &+ 3 \D^2 S (x\alpha+F(x)) (F''(x)^{\otimes 2} )\\
%	   &+ 4 \D^2 S (x\alpha+F(x)) ((\alpha+F'(x)) \otimes F'''(x))\\
%	   &+ \D S (x\alpha+F(x)) (F''''(x))\\
%r'''''(x)&= \D^5 S (x\alpha+F(x)) (\alpha + F'(x))^{\otimes 5} \\
%	   &+ 10 \D^4 S (x\alpha+F(x)) ((\alpha+F'(x))^{\otimes 3} \otimes F''(x))\\
%   	   &+ 10 \D^3 S (x\alpha+F(x)) ((\alpha+F'(x))^{\otimes 2} \otimes F'''(x))\\
%   	   &+ 15 \D^3 S (x\alpha+F(x)) ((\alpha+F'(x)) \otimes (F''(x))^{\otimes 2})\\
%   	   &+ 10 \D^2 S (x\alpha+F(x)) (F''(x)\otimes F'''(x) )\\
%	   &+ 5 \D^2 S (x\alpha+F(x)) ((\alpha+F'(x)) \otimes F''''(x))\\
%	   &+ \D S (x\alpha+F(x)) (F'''''(x)).
%\end{align*}

\begin{lemma}\label{lem:determineF0}
Let $E$ be a Banach space and let $U$ be an open neighbourhood of $0 \in E$. Consider a smooth function $S \colon U \to \R$ with $S(0)=0$ such that \cref{as:T1} and \cref{as:T2} hold for a Fredholm operator $T$ with
\begin{equation*}%\label{eq:foldT}
\ker T = \mathrm{span}_\R \{\alpha\}
\end{equation*}
for a non-trivial element $\alpha \in E$. There exists an open interval $I$ containing 0 and a unique function $F \colon I \to \rg T$ with $F(0)=0$, $F'(0)=0$ s.t.\
\begin{equation}
\label{eq:DefFinLemma}
\forall s \in I, \; \; \forall \xi \in \rg T :\quad  S^{(1)}(s\alpha + F(s))(\xi)=0.
\end{equation}
Moreover, all derivatives $F^{(n)}(0)$ with $n \ge 2$ can be obtained successively from
\begin{align}\label{eq:recrelF0}
\forall \xi \in \rg T : 0&=\sum_{k=1}^n 
\sum_{{j\in \mathcal{J}_k^n}} \frac{n!}{ j!} 
S^{(k+1)}_0\left(\xi \otimes \bigotimes_{l=1}^{n-k+1} \frac {\left(\left.\frac{\d^{l}}{\d s^{l}}\right|_{s=0}(s\alpha + F(s))\right)^{\otimes j_l}} {{(l!)}^{j_l}}\right),
\end{align}
where $j=(j_1,\ldots,j_{n-k+1})\in \mathcal{J}_k^n$ as defined in \eqref{eq:BellPolyIndexSet} and ${ j!} = j_1 ! j_2 ! \ldots j_{n-k+1}!$.
%whereas ${ j!} = j_1 ! j_2 ! \ldots j_{n-k+1}!$ and the second summation in \eqref{eq:recrelF0} is subject to the constraints \eqref{eq:cond2Sum}. 
%Equation \eqref{eq:recrelF0} has to hold for all $\xi \in E$ with $\langle \xi,\alpha \rangle_E =0$.
\end{lemma}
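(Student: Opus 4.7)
The plan is to derive $F$ from the Banach-space implicit function theorem and then read off the recurrence by applying \cref{prop:DerivativesBellPoly} (Faà di Bruno) to an appropriate scalar-valued composition. I would set up the implicit function argument with
\[
G\colon \R \times \rg T \to \rg T, \qquad G(s,y) := \nabla_y S(s\alpha + y),
\]
which is smooth by \cref{as:T2} and satisfies $G(0,0)=0$. To identify $\partial_y G(0,0)$ I would differentiate the identity $\langle \nabla_y S(u), v\rangle_E = \D S(u)v$ (for $v \in \rg T$) in $u$ at the origin and invoke \cref{as:T1} to get $\langle \D(\nabla_y S)(0)(h), v\rangle_E = \langle Th, v\rangle_E$ for every $h \in E$ and $v \in \rg T$. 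For $h \in \rg T$ both $\D(\nabla_y S)(0)(h)$ and $Th$ lie in $\rg T$, so non-degeneracy of $\langle \cdot,\cdot\rangle_E$ restricted to $\rg T$ forces $\partial_y G(0,0) = T|_{\rg T}$. Since $T$ is Fredholm of index $0$ with $E = \ker T \oplus \rg T$ and $\ker T = \mathrm{span}_\R\{\alpha\}$, the restriction $T|_{\rg T}$ is a continuous bijection onto $\rg T$, hence an isomorphism by the open mapping theorem. The implicit function theorem then yields a unique smooth $F\colon I \to \rg T$ with $F(0)=0$ solving $G(s,F(s))=0$ on an open interval $I \ni 0$, and this is equivalent to \eqref{eq:DefFinLemma} by the same non-degeneracy argument.

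Differentiating $G(s,F(s))=0$ at $s=0$ next gives $\partial_s G(0,0) + T|_{\rg T} F'(0) = 0$. The source term equals $\D(\nabla_y S)(0)(\alpha) \in \rg T$ and satisfies $\langle \D(\nabla_y S)(0)(\alpha), v\rangle_E = \langle T\alpha, v\rangle_E = 0$ for all $v \in \rg T$, so $\partial_s G(0,0)=0$ and therefore $F'(0)=0$. For the recurrence I fix $\xi \in \rg T$ and introduce the scalar function $\tilde S_\xi(u) := \D S(u)(\xi)$, whose $k$-th Fr\'echet derivative at the origin is $\tilde S_\xi^{(k)}(0) = S^{(k+1)}_0(\xi,\cdot,\ldots,\cdot)$. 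The defining identity \eqref{eq:DefFinLemma} reads $\tilde S_\xi(s\alpha + F(s)) = 0$ for all $s \in I$, so applying \cref{prop:DerivativesBellPoly} to this composition, differentiating $n$ times, and evaluating at $s=0$ reproduces exactly \eqref{eq:recrelF0} after substituting the formula for $\tilde S_\xi^{(k)}(0)$.

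It remains to check that \eqref{eq:recrelF0} determines $F^{(n)}(0)$ successively from the lower jet. I would isolate the $k=1$ contribution: in $\mathcal{J}_1^n$ the only admissible multi-index is $(0,\ldots,0,1)$, contributing $S^{(2)}_0(\xi, F^{(n)}(0)) = \langle T F^{(n)}(0), \xi\rangle_E$, where I use that the $n$-th derivative of $s\alpha + F(s)$ at $0$ is $F^{(n)}(0)$ since $n \ge 2$. For $k \ge 2$ the index $l$ runs only over $1,\ldots,n-k+1 \le n-1$, so the remaining terms depend exclusively on $\alpha$ and $F''(0),\ldots,F^{(n-1)}(0)$. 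The equation thus takes the form $\langle T F^{(n)}(0), \xi\rangle_E = \Phi_n(\xi)$ for all $\xi \in \rg T$, with $\Phi_n$ determined by the previously computed jet, and invertibility of $T|_{\rg T}$ together with non-degeneracy of $\langle \cdot,\cdot\rangle_E$ on $\rg T$ identifies $F^{(n)}(0) \in \rg T$ uniquely. The main technical point I expect is the careful identification $\partial_y G(0,0) = T|_{\rg T}$ as an isomorphism and the transfer of the weak recurrence against $\xi \in \rg T$ to a well-posed equation in $\rg T$; both hinge on the Fredholm splitting provided by \cref{as:T1}.
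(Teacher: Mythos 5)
Your proposal is correct and follows essentially the same route as the paper: existence and uniqueness of $F$ via the implicit function theorem applied to $\nabla_y S$ (using that $T|_{\rg T}$ is an isomorphism), derivation of the recurrence by repeatedly differentiating $S^{(1)}(s\alpha+F(s))(\xi)=0$ via Fa\`a di Bruno, and solvability from the observation that $F^{(n)}(0)$ enters only through the $k=1$ term $S^{(2)}_0(\xi,F^{(n)}(0))=\langle T F^{(n)}(0),\xi\rangle_E$. You fill in details that the paper delegates to the splitting-lemma proof of Golubitsky--Marsden (the identification $\partial_y G(0,0)=T|_{\rg T}$ and the vanishing of $F'(0)$), and the auxiliary functional $\tilde S_\xi(u)=\D S(u)(\xi)$ is a clean way to reduce to the scalar Fa\`a di Bruno statement, but the structure of the argument is the same.
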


\begin{remark}\label{rem:detF0Bell}
Relation \eqref{eq:recrelF0} can be expressed as
\[
0 = B_n(\alpha,F''(0),\ldots,F^{(n)}(0)),
\]
where multiplications in the definition of the Bell polynomial \eqref{eq:bellpolynomial} are interpreted as tensor products. Furthermore, the symbol ``+'' is replaced by ``$ + S^{(\mathrm{degree})}(s\alpha+F(s)) \xi \otimes $'' and parenthesis may be added around the input arguments of the form $S^{(\mathrm{degree})}(s\alpha+F(s))$.
%The only instance where $F^{(n)}(0)$ appears in the formula is 
\end{remark}

\begin{corollary}\label{cor:implDerr}
The relations for $n=2,3,4,5$ of \cref{lem:determineF0} read
\begin{align*}
0&= S^{(3)}_0(\xi \otimes  \alpha^{\otimes 2}) + S^{(2)}_0(\xi \otimes  F''(0))\\
0&= S^{(4)}_0(\xi \otimes  \alpha^{\otimes 3}) + 3 S^{(3)}_0(\xi \otimes \alpha \otimes F''(0))+ S^{(2)}_0(\xi \otimes F'''(0))\\
0&= S^{(5)}_0(\xi \otimes \alpha^{\otimes 4}) 
	  + 6 S^{(4)}_0(\xi \otimes \alpha^{\otimes 2} \otimes F''(0))
	  + 3 S^{(3)}_0(\xi \otimes F''(0)^{\otimes 2} )\\
	   &+ 4 S^{(3)}_0(\xi \otimes \alpha \otimes F'''(0))
	   + S^{(2)}_0(\xi \otimes F''''(0))\\
0      &= S_0^{(6)}(\xi \otimes  \alpha^{\otimes 5} )
	  + 10 S^{(5)}_0(\xi \otimes \alpha^{\otimes 3} \otimes F''(0))\\
   	  &+ 10 S^{(4)}_0(\xi \otimes \alpha^{\otimes 2} \otimes F'''(0))
   	   + 15 S^{(4)}_0(\xi \otimes \alpha \otimes F''(0)^{\otimes 2})\\
   	   &+ 10 S^{(3)}_0(\xi \otimes F''(0)\otimes F'''(0) )
	   + 5 S^{(3)}_0(\xi \otimes \alpha \otimes F''''(0))\\
	   &+ S^2_0(\xi \otimes F'''''(0)).
\end{align*}
\end{corollary}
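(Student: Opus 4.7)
The plan is to derive the four identities as direct specializations of the general recursion \eqref{eq:recrelF0} of Lemma \ref{lem:determineF0} at $n=2,3,4,5$. Rather than expand the multi-index set $\mathcal{J}_k^n$ from scratch each time, I would invoke Remark \ref{rem:detF0Bell}, which packages the combinatorics of \eqref{eq:recrelF0} into the evaluation of the $n$-th complete exponential Bell polynomial $B_n$ under a fixed substitution rule. Recall that $F'(0) = 0$, hence
\[
\left.\frac{\d}{\d s}\right|_{s=0}(s\alpha + F(s)) = \alpha,
\qquad
\left.\frac{\d^l}{\d s^l}\right|_{s=0}(s\alpha + F(s)) = F^{(l)}(0) \quad (l\ge 2),
\]
so that the variables $(x_1,x_2,\ldots,x_n)$ of $B_n$ are to be specialised to $(\alpha,F''(0),\ldots,F^{(n)}(0))$.

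Under the dictionary of Remark \ref{rem:detF0Bell}, each monomial $c\,x_1^{j_1}\cdots x_{n-k+1}^{j_{n-k+1}}$ with $\sum_l j_l = k$ appearing in $B_n$ translates to
\[
c\,S^{(k+1)}_0\!\left(\xi \otimes \alpha^{\otimes j_1} \otimes F''(0)^{\otimes j_2} \otimes \cdots \otimes F^{(n-k+1)}(0)^{\otimes j_{n-k+1}}\right).
\]
I would then take the explicit formulas for $B_2,B_3,B_4,B_5$ in \eqref{eq:BellPolysMonForm} and apply this translation term by term. For instance, for $n=4$, the summand $6x_1^2 x_2$ of $B_4$ produces $6\,S^{(4)}_0(\xi\otimes\alpha^{\otimes 2}\otimes F''(0))$, the summand $3x_2^2$ produces $3\,S^{(3)}_0(\xi\otimes F''(0)^{\otimes 2})$, and so on; concatenating these gives precisely the displayed equation for $n=4$. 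The cases $n=2,3,5$ are treated in the same way.

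I do not anticipate any substantive obstacle; the proof is purely a bookkeeping task. The only care required is to track (a) the total tensor degree of each resulting monomial in order to identify the correct order $k+1$ of $S^{(\cdot)}_0$, and (b) the multinomial coefficients $n!/j!$ encoded in the Bell polynomial, namely $3$ (for $n=3$), $6,4,3$ (for $n=4$), and $10,15,10,5$ (for $n=5$). If one prefers to avoid invoking Remark \ref{rem:detF0Bell}, the same result can be obtained by direct enumeration of $\mathcal{J}_k^n$ for $n\le 5$, which is a short finite check involving only the partitions already listed in the combinatorial reading of $B_4$ in the preceding remark.
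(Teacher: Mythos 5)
Your proposal is correct and matches the intended derivation. The paper states this corollary without a separate proof precisely because it is the mechanical specialization of \eqref{eq:recrelF0} to $n=2,3,4,5$ via Remark~\ref{rem:detF0Bell}: substitute $(x_1,\ldots,x_n)\mapsto(\alpha,F''(0),\ldots,F^{(n)}(0))$ into the monomial forms of $B_n$ from \eqref{eq:BellPolysMonForm}, read each monomial of total degree $k$ as $S_0^{(k+1)}(\xi\otimes\cdots)$, and carry the multinomial coefficients over unchanged; since $F'(0)=0$, only $\alpha$ survives as the degree-1 jet entry. (One small slip in your bookkeeping sentence: $B_5$ has three summands with coefficient $10$, not two, giving coefficients $10,15,10,10,5$; your term-by-term translation, however, is exactly right.)
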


\begin{proof}[Proof of \cref{lem:determineF0}]
The operator $T$ defines an isomorphism on $\rg T$. Thus, the implicit function theorem applies to $\nabla_y S$ and together with \eqref{eq:DefFinLemma} provides the existence and uniqueness of $F$ with $F(0)=0$ and $F'(0)=0$ in analogy to the proof of \cref{thm:SplittingLemmaInfinte} which can be found in \cite{Golubitsky1983}.
%
%The restriction of the bilinear form $S^{(2)}_0$ to $\rg T$
%
%Let $\e >0$ such that $\{s\alpha\, | \, s \in [-\e,\e]\} \subset U_{\ker T}$.
%Recall from the remarks following \cref{thm:SplittingLemmaInfinte} that the function \textcolor{red}{$F\colon U_{\ker T}\to U_{\rg T}$}
%%$F\colon (-\e,\e) \to U_{\rg T}$ 
%is uniquely defined by $F(0)=0$, $F'(0)=0$ and
Let $\xi \in \rg T$. Differentiating 
\begin{equation}\label{eq:DefFNew}
S^{(1)}(s \alpha +F(s) )(\xi) =0\end{equation}
repeatedly with respect to $s$ we obtain the following relations.

%As suggested in remark 4 in \cite[\S 4]{Golubitsky1983}, we use implicit differentiation of the defining equation \eqref{eq:defF} for $F$ to obtain information on the \xialues of the deri\xiati\xies of $F$ at 0.

%Using implicit differentiation of the defining equation \eqref{eq:defF} for $F$ we obtain

\begin{align*}
0&=S^{(1)}(s\alpha+F(s))(\xi)\\
0&=S^{(2)} (s\alpha+F(s)) (\xi \otimes (\alpha + F'(s)))\\
0&= S^{(3)} (s\alpha+F(s)) (\xi \otimes  (\alpha + F'(s))^{\otimes 2}) 
	  + S^{(2)} (s\alpha+F(s)) (\xi \otimes  F''(s))\\
0&= S^{(4)} (x\alpha+F(x)) (\xi \otimes  (\alpha + F'(x))^{\otimes 3}) \\
		&+ 3 S^{(3)} (x\alpha+F(x)) (\xi \otimes (\alpha+F'(x)) \otimes F''(x))\\
		&+ S^{(2)} (x\alpha+F(x)) (\xi \otimes F'''(x))\\
%0&= \D^5 S (x\alpha+F(x)) (\xi \otimes (\alpha + F'(x))^{\otimes 4}) \\
%	   &+ 6 \D^4 S (x\alpha+F(x)) (\xi \otimes (\alpha+F'(x))^{\otimes 2} \otimes F''(x))\\
%	   &+ 3 \D^3 S (x\alpha+F(x)) (\xi \otimes F''(x)^{\otimes 2} )\\
%	   &+ 4 \D^3 S (x\alpha+F(x)) (\xi \otimes (\alpha+F'(x)) \otimes F'''(x))\\
%	   &+ \D^2 S (x\alpha+F(x)) (\xi \otimes F''''(x))\\
%0      &= \D^6 S (x\alpha+F(x)) (\xi \otimes  \alpha + F'(x))^{\otimes 5} \\
%	   &+ 10 \D^5 S (x\alpha+F(x)) (\xi \otimes (\alpha+F'(x))^{\otimes 3} \otimes F''(x))\\
%   	   &+ 10 \D^4 S (x\alpha+F(x)) (\xi \otimes (\alpha+F'(x))^{\otimes 2} \otimes F'''(x))\\
%   	   &+ 15 \D^4 S (x\alpha+F(x)) (\xi \otimes (\alpha+F'(x)) \otimes (F''(x))^{\otimes 2})\\
%   	   &+ 10 \D^3 S (x\alpha+F(x)) (\xi \otimes F''(x)\otimes F'''(x) )\\
%	   &+ 5 \D^3 S (x\alpha+F(x)) (\xi \otimes (\alpha+F'(x)) \otimes F''''(x))\\
%	   &+ \D S^2 (x\alpha+F(x)) (\xi \otimes F'''''(x)).
0&=\ldots
\end{align*}
We encounter the same combinatorical relations as in \cref{prop:DerivativesBellPoly} such that differentiating \eqref{eq:DefFNew} $n$ times gives
\[
0 = B_n(\alpha + F'(s),F''(s),\ldots,F^{(n)}(s)),
\]
where multiplications in the equation above are interpreted as tensor products and ``$+$'' is replaced by ``$+S^{(\mathrm{degree})}(s\alpha+F(s)) \xi \otimes $. One may add parenthesis around the input arguments of the form $S^{(\mathrm{degree})}(s\alpha+F(s))$.
In other words the relations are given by
\[0=
\sum_{k=1}^n 
\sum_{j\in \mathcal J^k_n} \frac{n!}{ j!} 
S^{(k+1)}(s\alpha+F(s))\left(\xi \otimes \bigotimes_{l=1}^{n-k+1} \frac {\left(\frac{\d^{l}}{\d s^{l}}(s\alpha + F(s)) \right)^{\otimes j_l}} {{(l!)}^{j_l}}\right).
\]
%whereas 
%${ j!} = j_1 ! j_2 ! \ldots j_{n-k+1}!$ and the second sum in the above expression is subject to the constraints \eqref{eq:cond2Sum}.
An evaluation at $s=0$ yields the claimed formula. In the $n^{\mathrm{th}}$ step of differentiation the term $F^{(n)}(0)$ only occurs as an input argument of $S^2_0$ and not elsewhere. Since the symmetric operator $T$ restricted to $\rg T$ is an isomorphism on $\rg T$ this successively determines all derivatives of $F$ at $0$. 
\end{proof}

\begin{proof}[Proof of \cref{thm:ASeriesDetect} and its corollaries]
The first two statements of the theorem follow by definition.
Assume that \cref{as:T1} and \cref{as:T2} hold for an operator $T$ with 1-dimensional kernel. Let $\alpha \in \ker\, T\setminus \{0\}$.
To analyse the singularities of $S$ it suffices to analyse the singularities of the function $r\colon U_{\ker} \to \R$ provided by \cref{thm:SplittingLemmaInfinte}.
Using the identification $\R \xrightarrow{\sim} \ker T$, $s \mapsto s \alpha$, we identify $U_{\ker}$ with an open interval $I$ containing 0 and obtain $r \colon I \to \R$.
The function $r$ has the form $r(s) = S(s \alpha + F(s))$ for a smooth function $F \colon I \to \R$. The functional $S$ has a singularity of type $A_m$ at 0 if and only if $r^{(k)}(0)=0$ for all $k \le m$ and $r^{(m+1)}(0)\not =0$.

%In each step of the algorithm presented in the statement of \cref{thm:ASeriesDetect} the variable $n$ is assigned an integer and the validity of an equation is tested. 

The algorithm presented in the statement of \cref{thm:ASeriesDetect} consists of a sequence of tests and a variable $n$ acts as a counter. If the state of the variable $n$ is $k$ then the test in the algorithm corresponds to testing $r^{(k)}(0)=0$. This can be seen from \cref{prop:DerivativesBellPoly}. (The formula for $r^{(k)}(0)$ is related to the $k^{\mathrm{th}}$ complete exponential Bell polynomial.)
To evaluate $r^{(k)}(0)$ the $k-2$-jet of $F$ is required as observed in \cref{cor:ReqJet}. 
If the state of the variable $n$ is $k \ge 4$ then $F^{(k-2)}(0)$ gets determined in the algorithm just before $r^{(k)}(0)=0$ is tested. \Cref{lem:determineF0} justifies that the algorithm can determine $F^{(k-2)}(0)$ via the given formula (which is related to the $(k-2)^{\mathrm{th}}$ complete exponential Bell polynomial) if all values $F^{(i)}(0)$ are defined for $i \le k-3$. The values $F(0)$ and $F'(0)$ are set to be 0.
The theorem follows by induction.

%The $k$-th derivative $r^{(k)}(x)$ of $r(x)=S(x\alpha+F(x))$ can be expressed in terms of complete exponential Bell polynomials.

%\Cref{thm:ASeriesDetect} follows from \cref{prop:DerivativesBellPoly,lem:determineF0} and its 
\Cref{prop:fold,prop:HilbertCusp,prop:SW,prop:butterfly} follow from \cref{cor:derr,cor:implDerr}.
In \cref{prop:SW} and \cref{prop:butterfly} the bifurcation test equations $r^{(4)}(0)=0$ and $r^{(5)}(0)=0$ have been simplified using \eqref{eq:v} and \eqref{eq:w}.
\end{proof}

%Since the symmetric operator $T$ restricted to $\rg T$ is an isomorphism this successively determines $F''(0)$, $F'''(0)$,$F''''(0)$, $F'''''(0)$. Indeed, all derivatives are determined in this way: it follows by induction that in the $n^{\mathrm{th}}$ step the term $F^{(n)}(0)$ only occurs as an input argument of $\D S^2 (0)$ and not elsewhere. 

%Thus, to test whether $S$ has a singularity of type $A_k$ for $k \ge n$ we need to calculate $F''(0)$, $F'''(0)$, \ldots, $F^{(n-2)}(0)$ and test $r'''(0)=0, \ldots, r^{n}(0)=0$.

%The test equations until $A_5$ (butterfly singularity) are given as

%Let us reformulate our results for the fold singularity ($A_2$), the cusp singularity ($A_3$), the swallowtail singularity ($A_4$) and the butterfly singularity ($A_5$) as propositions which incorporate the determination of the values of the derivatives.

\begin{prop}\label{prop:signA}
Let $E$ be a Banach space and $S$ be a real-valued functional defined on an open neighbourhood of $0 \in E$ such that \cref{as:T1} and \cref{as:T2} hold. Consider the function $r$ of the proof of \cref{thm:ASeriesDetect}, whose derivatives are bifurcation test equations. 
If $S$ has a singularity of type $A_{2k+1}$ with $k \in \N$ at $0$ then the signature of $r^{(2n+2)}(0)$ is well-defined.
\end{prop}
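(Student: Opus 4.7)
The function $r \colon I \to \R$ constructed in the proof of \cref{thm:ASeriesDetect} is defined on a one-dimensional interval, since $\ker T$ is one-dimensional under \cref{as:T1} and \cref{as:T2} for an $A$-series singularity. An $A_{2k+1}$ singularity means $r^{(j)}(0) = 0$ for $0 \le j \le 2k+1$ and $r^{(2k+2)}(0) \neq 0$. The content of the proposition is that the sign of $r^{(2k+2)}(0)$ does not depend on the non-canonical data entering the construction of $r$, namely the generator $\alpha \in \ker T \setminus \{0\}$ used to identify $\ker T \cong \R$, and the inner product $\langle \cdot, \cdot\rangle_E$ satisfying \cref{as:T1}.

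My plan is to reduce the statement to a parity argument. First I would address the dependence on $\alpha$: the subspace $\ker T$ is intrinsic since $\ker T = \{u \in E : \D^2 S(0)(u,\cdot) = 0\}$, but the scalar identification is not. If $\tilde\alpha = c\alpha$ with $c \neq 0$, then uniqueness of $F$ in \eqref{eq:defF} forces $\tilde F(s) = F(cs)$, so $\tilde r(s) = r(cs)$ and consequently $\tilde r^{(2k+2)}(0) = c^{2k+2}\, r^{(2k+2)}(0)$. Because $2k+2$ is even, $c^{2k+2} > 0$ regardless of the sign of $c$, and the sign of the derivative is preserved. Second, for two inner products on $E$ both realising \cref{as:T1} and \cref{as:T2}, the corresponding residuals $r_1, r_2$ produced by \cref{thm:SplittingLemmaInfinte} are right-equivalent as germs at $0$: composing the two fibred coordinate changes $(\bar x, \bar y) = (x, \eta_i(x,y))$ and projecting onto $\ker T$ yields a local diffeomorphism $\phi \colon (\R,0) \to (\R,0)$ with $\phi'(0) \neq 0$ such that $r_2 = r_1 \circ \phi$. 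Applying \cref{prop:DerivativesBellPoly} (i.e.\ Fa\`a di Bruno) and using the vanishing $r_1^{(j)}(0) = 0$ for $j \le 2k+1$, every term in the expansion of $r_2^{(2k+2)}(0)$ collapses except the one coming from the partition $(2k+2)\cdot 1$, leaving $r_2^{(2k+2)}(0) = (\phi'(0))^{2k+2}\, r_1^{(2k+2)}(0)$, again of the same sign.

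The main obstacle is establishing the right-equivalence $r_2 = r_1 \circ \phi$ in the Banach-space setting: this is the well-known uniqueness (up to right-equivalence) of Morse's residual in finite-dimensional catastrophe theory, but here it requires tracking the structure-preserving coordinate changes $(\bar x, \bar y) = (x, \eta_i(x,y))$ from \cref{thm:SplittingLemmaInfinte} and verifying that the induced map on $\ker T$ is a genuine local diffeomorphism fixing $0$ with non-zero derivative at the origin. Once this structural fact is in hand the proposition follows purely from parity: even powers of non-zero real numbers are positive, so both sources of ambiguity leave the sign of $r^{(2k+2)}(0)$ invariant.
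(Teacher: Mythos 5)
Your proof is correct, and for the core step you take a genuinely different and cleaner route than the paper. Where the paper establishes invariance of the sign of $r^{(2k+2)}(0)$ under $\alpha\mapsto -\alpha$ by an inductive parity argument on the combinatorics of Bell polynomials (tracking how $F^{(j)}(0)$ transforms partition by partition, using that an even set always splits into an even number of odd-cardinality blocks), you instead observe the intrinsic reparametrisation: with $\hat F$ denoting the implicit map on $\ker T$ and $r(s)=S(s\alpha+\hat F(s\alpha))$, replacing $\alpha$ by $c\alpha$ gives $\tilde r(s)=r(cs)$, hence $\tilde r^{(2k+2)}(0)=c^{2k+2}r^{(2k+2)}(0)$. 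This is a one-line chain rule that handles arbitrary nonzero rescalings, whereas the paper's own induction is phrased only for $c=-1$. The paper's combinatorial bookkeeping becomes unnecessary once you notice this uniformity in $c$.

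You additionally address independence of the chosen inner product via right-equivalence of residuals; the paper's inline argument does not touch this (it only varies $\alpha$, treating the inner product, and hence $T$, $\rg T$, and $F$, as fixed), and the paper instead offloads that level of invariance to the cited classification up to right-equivalence. You correctly reduce that piece to the Faà di Bruno computation $r_2^{(2k+2)}(0)=(\phi'(0))^{2k+2}r_1^{(2k+2)}(0)$ using the vanishing of lower derivatives, and you honestly flag that constructing the diffeomorphism $\phi$ from the two fibred splittings is the nontrivial input. That is a fair assessment: the induced map on $\ker T$ by a single fibred change $(\bar x,\bar y)=(x,\eta(x,y))$ is the identity, so when the two splittings use different complements $\rg T_1\neq \rg T_2$ some work remains to compare the two residuals on the common $\ker T$; this is exactly the uniqueness-up-to-right-equivalence statement the paper quotes from the literature. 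Overall, your argument is not only valid but tightens the paper's reasoning on the $\alpha$-dependence and explicitly raises the inner-product dependence that the paper's inline proof leaves implicit.
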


\begin{proof}
The statement follows from the classification of singularities up to {\em right-equivalence} in catastrophe theory \cite{lu1976singularity} or can be deduced from our considerations as follows. The map $r(s) = S(s \alpha + F(s))$ from the proof of \cref{thm:ASeriesDetect} is defined uniquely up to the choice of $\alpha$, where $\alpha$ is as in \cref{as:T1}. 
The determining equations for the jet of $F$ at 0 and the bifurcation test equations $r^{(j)}(0)$ ($j \in \N$) are related to Bell polynomials by \cref{prop:DerivativesBellPoly,lem:determineF0,rem:detF0Bell}.
\begin{itemize}
\item
In any partition of an even amount of elements there must be an even number (or none) of subsets with odd cardinality.
\item
In any partition of an odd amount of elements there must be an odd number of subsets with odd cardinality.
\end{itemize}
Using the two combinatorial observations above we see inductively that the signatures of the derivatives $F^{(2j)}(0)$ are defined invariantly of $\alpha$, all derivatives $F^{(2j+1)}(0)$ change to $-F^{(2j+1)}(0)$ ($j \in \N$) as $\alpha \mapsto -\alpha$ and can conclude that the signature of $r^{(2k+2)}(0)$ is well-defined.
\end{proof}

\begin{remark}\label{rem:PosNegDeterm}
The signature considered in \cref{prop:signA} occurs in catastrophe theory if a classification of singularities up to {\em right-equivalence} is considered \cite{lu1976singularity}. The singularities $A_{2k+1}$ do not have a signature.
If the algorithm in \cref{thm:ASeriesDetect} returns $n=2k+2$ with $k \in \N$ and \cref{as:T1} and \cref{as:T2} are satisfied then the singularity $A_{2k+1}$ is of the positive type if and only if the last test equation (which corresponds to the $n^{\mathrm{th}}$ complete exponential Bell polynomial) is positive. Otherwise the singularity is of the negative type.
\end{remark}

\begin{remark}\label{rem:PosNegMerge}
If the functional $S$ has $2k+1$ parameters ($k \in \N$) then, under non-degeneracy conditions, $A_{2k+1}$ singularities occur as 1-parameter families (by the implicit function theorem).
If two branches of $A_{2k+1}$ singularities merge in a $A_{2k+2}$ singularity then one consists of singularities of the positive type and the other one of singularities of the negative type.
This is because the bifurcation test equation $r^{(2k+2)}$, which determines the signs of the $A_{2k+1}$ singularities, must have a non-degenerate zero at the $A_{2k+2}$ singularity, i.e.\ its graph intersects the axis of abscissas transversally.
\end{remark}

\begin{remark}
\Cref{rem:PosNegMerge} applies to the fold bifurcation $A_2$ as well.
In the finite-dimensional case the signature of a solution $z$ can be obtained as the sign of the determinant\footnote{which does not depend on the choice of basis since $\D S(z)=0$} of the Hessian matrix $S^{(2)}_z$ of $S$ at $z$. In a numerical computation the sign can be determined by performing an LU-decomposition of $S^{(2)}_z$ without pivoting and counting whether the number of positive signs on the diagonal of $U$ is even or odd.
Keeping track of the signatures of solutions, cusps,\ldots, provides information on which ones may be able to meet in a bifurcation.
\end{remark}

\section{Example: semilinear Poisson equation}\label{sec:NumExp}

%For an example in the (1-dimensional) ODE case see \cite[\S 4]{Beyn1984}, for instance.

We will exemplify how the augmented systems derived in \cref{subsec:AugSysASeries} can be applied to PDEs. For this, we consider a second order, semilinear PDE describing the steady state solutions in a reaction-diffusion process \cite{Mei2000Ch1}. 
First, we will justify that the theory presented in \cref{sec:SplittingLemmaBanach} applies and write down the continuous recognition equations. %The theory applies to the problem itself and to its discretisation alike.
By a concrete numerical example we will show how augmented systems can be employed in continuation methods to find high codimensional singularities.

%We will use the derived equations in \cref{sec:SplittingandBifurTest} to find a swallowtail singularity in a boundary value problem for a PDE.

\subsection{Setup}\label{subsec:Setup}

For a smooth function $f \colon \R \times \R^s  \to \R$ we consider the homogeneous Dirichlet problem

\begin{equation}\label{eq:PDE}
\left\{\begin{aligned}
\Delta u + f(u,\lambda) &= 0\\
u|_{\p \Omega} &=0
\end{aligned}\right.
\end{equation}
on {an open and bounded domain $\Omega \subset \R^d$ with boundary $\p \Omega$ of class $\mathcal C^k$, where $k >  \frac d2 +1$.} We denote the standard volume form on $\Omega$ by $\d {\bf x}$.
%
%\begin{equation}\label{eq:PDE}
%u_{xx}+u_{yy} + f(u,\lambda) = 0, \quad f(u,\lambda)=\lambda_1 \exp\left({\frac u{\lambda_2 u+1}}\right)+\lambda_3 \sin(\lambda_1 u)
%\end{equation}
%
% for a smooth function $f \colon \R \times \R^3 \to \R$.
%The Splitting Lemma (\cref{thm:SplittingLemmaInfinte}) and the bifurcation test equations \eqref{prop:fold}, \eqref{prop:HilbertCusp}, \eqref{prop:SW}, \eqref{prop:butterfly} can be applied 
%
In analogy to \cite[Example 7]{Buchner1983} we consider the following setting {which will allow us to employ the Infinite-Dimensional Splitting Lemma. See \cite{ruf1995HigherSingularities}, for instance, for an alternative treatment.} 
{The Sobolev space $H^k(\Omega)$ is compactly embedded into $\mathcal{C}^{1}(\overline{\Omega})$ \cite[Thm 6.2]{SobolevSpacesAdams}. Consider the Hilbert space $E=H_0^1(\Omega) \cap H^k(\Omega)$ with the structure inherited from $H^k(\Omega)$ \cite{SobolevSpacesAdams,LaxFuncAna}.}
{Let $\bar f$ be s.t.\ $\frac{\p}{\p t} \bar f (t,\lambda) = f(t,\lambda)$ and consider the non-linear functional $S\colon E \times \R^s \to \R$ defined as
\[
S(u,\lambda) = \int_{\Omega} \left( - \frac 12 \langle \nabla u, \nabla u \rangle + \bar f(u,\lambda)\right) \d {\bf x}.
\]
The Fr\'echet derivatives of $S$ in the directions $v_1,v_2,\ldots \in E$ exist and are given as
\begin{align*}
S^{(1)}(u,\lambda)(v_1) &= \int_{\Omega} \left( -  \langle \nabla u, \nabla v_1 \rangle + f(u,\lambda) v_1\right) \d {\bf x}\\
S^{(2)}(u,\lambda)(v_1,v_2) &= \int_{\Omega} \left( -  \langle \nabla v_1, \nabla v_2 \rangle + f'(u,\lambda) v_1 v_2\right) \d {\bf x}\\
S^{(l)}(u,\lambda)(v_1,v_2,\ldots,v_l)&= \int_{\Omega} f^{(l)}(u,\lambda) v_1 v_2\ldots v_l \d {\bf x}.
\end{align*}
Here $f^{(l)}(t,\lambda) = \frac{\p^l}{\p t^l}f(t,\lambda)$. The equation $S^{(1)}(u,\lambda)(v) = 0$ for all $v \in E$ is a weak formulation of \eqref{eq:PDE}.
}
We consider the {bilinear form $\langle \cdot,\cdot \rangle_{E} \colon E\times E \to \R$ with}
\begin{equation}
\label{eq:NewInnerProduct}
\langle v,w \rangle_{E} =  \int_\Omega \langle \nabla v, \nabla w \rangle \d {\bf x},
\quad \qquad {v,w \in E}
\end{equation}
where $\nabla v$ and $\nabla w$ denote weak derivatives of $v,w \in E$ and $\langle \cdot,\cdot\rangle$ the scalar product in $\R^d$.
{The bilinear form $\langle \cdot,\cdot \rangle_{E}$ is symmetric, positive definite by Poincar\'e's inequality and bounded using the Cauchy-Schwarz inequality. The embedding $E = H_0^1(\Omega) \cap H^{k}(\Omega) \hookrightarrow H^{k-2}(\Omega)$ is compact \cite[Thm 6.2]{SobolevSpacesAdams}. Moreover, the Dirichlet Laplacian  $L$ is an isomorphism $H_0^1(\Omega) \cap H^{k}(\Omega) \to H^{k-2}(\Omega)$ \cite{FriedmanPDE}. Therefore, the operator $\Delta^{-1}\colon E \to E$ defined as the composition
\[
H_0^1(\Omega) \cap H^{k}(\Omega) \hookrightarrow H^{k-2}(\Omega) \stackrel{L^{-1}}{\to} H_0^1(\Omega) \cap H^{k}(\Omega)\]
is compact.
}
For each $(u,\lambda) \in E\times \R^s$ define the operator $T(u,\lambda)\colon E \to E$ by
\[
T(u,\lambda) v = -v - \Delta^{-1}(f'(u,\lambda) v).
\]
{For each $(u,\lambda) \in E \times \R^s$ the operator $T{(u,\lambda)\colon E \to E}$ is a Fredholm operator: since $v \mapsto f'(u,\lambda) v$
is continuous from $E$ into $E$ and $\Delta^{-1}\colon E \to E$ is compact, it follows 
that the composition is compact and $T(u,\lambda)\colon E \to E$ is a Fredholm operator of index 0 (Fredholm alternative). We have}
	\[
{S^{(2)}(u,\lambda)(v_1,v_2) = \langle T(u,\lambda)v_1 , v_2 \rangle_E.}
	\]
{The operator $T(u,\lambda)$ is symmetric w.r.t.\ $\langle \cdot , \cdot \rangle_E$ such that we obtain the $\langle \cdot , \cdot \rangle_E$-orthogonality of $\ker(T(u,\lambda))$ and $\rg(T(u,\lambda))$.
Since $T(u,\lambda)$ is a Fredholm operator, both spaces are closed in $E$.
Moreover, $\rg(T(u,\lambda))$ has finite codimension.
Since $T(u,\lambda)$ is symmetric, it is an elementary exercise to deduce that $E = \ker(T(u,\lambda)) \oplus \rg(T(u,\lambda))$.
% such that we obtain the splitting $E=\ker(T(u,\lambda)) \oplus \rg(T(u,\lambda))$, where the spaces $\ker(T(u,\lambda))$ and $\rg(T(u,\lambda))$ are $\langle \cdot , \cdot \rangle_E$-orthogonal and closed in $E$ (w.r.t.\ the Banach space structure of $E$).
The projection $\mathrm{pr}\colon E \to \rg(T(u,\lambda))$ induced by the splitting is continuous.}
For each $\lambda \in \R^s$ we define the operator $\nabla_y S(\lambda) \colon E \to \rg T(u,\lambda)$ as 
\[
u \mapsto {\mathrm{pr}(}-u - \Delta^{-1}(f(u,\lambda)){)}
\]
to $\rg T(u,\lambda)$. {We have}
\[
{S^{(1)} (u,\lambda)(v) 
= \left\langle -u - \Delta^{-1}(f(u,\lambda)),  v \right\rangle_E
= \left\langle \nabla_y S(\lambda) (u),  v \right\rangle_E}
\]
{for all $v \in \rg(T(u,\lambda))$. The observations imply the following proposition.}
\begin{prop}
The equation
\[
\forall v \in E  : \quad S^{(1)} (u,\lambda)(v)
= \int_{\Omega} \big( - \langle \nabla u, \nabla v \rangle + f(u,\lambda)v \big) \d {\bf x}
=0 
\]
is a weak formulation of \eqref{eq:PDE}. Furthermore, {assuming that $S^{(1)}(u,\lambda)=0$,} \cref{as:T1} and \cref{as:T2} hold with $\langle \cdot , \cdot \rangle_E$ \eqref{eq:NewInnerProduct}, the operator $T{(u,\lambda)\colon E \to E}$ and $\nabla_y S{(\lambda)\colon E \to \rg T(u,\lambda)}$ defined above {for each $(u,\lambda) \in E \times \R^s$}.
\end{prop}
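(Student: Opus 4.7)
The plan is to assemble the three assertions (weak formulation, Assumption \ref{as:T1}, Assumption \ref{as:T2}) from the computations already prepared in the discussion preceding the statement. Almost all the analytic content has been established; what remains is to glue the pieces together and justify the one integration-by-parts identity on which the equivalence with \eqref{eq:PDE} rests.

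First I would verify the weak formulation. Since $u \in E = H_0^1(\Omega)\cap H^k(\Omega)$ with $k>d/2+1$, the Sobolev embedding gives $u \in \mathcal C^1(\overline{\Omega})$ and $\Delta u \in L^2(\Omega)$, while $v \in H_0^1(\Omega)$. Green's identity then yields
\begin{equation*}
\int_\Omega \Delta u \cdot v \, \d{\bf x} = -\int_\Omega \langle \nabla u, \nabla v\rangle \, \d{\bf x},
\end{equation*}
because the boundary term vanishes thanks to $v|_{\p\Omega}=0$. Hence $S^{(1)}(u,\lambda)(v)=\int_\Omega (\Delta u + f(u,\lambda))\, v\, \d{\bf x}$ for all $v\in E$, which by density of smooth compactly supported test functions implies that $S^{(1)}(u,\lambda)=0$ on $E$ if and only if $u$ solves \eqref{eq:PDE} in the weak sense.

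Next I would verify Assumption \ref{as:T1} at the critical point $(u,\lambda)$. The bilinear form $\langle \cdot,\cdot\rangle_E$ defined by \eqref{eq:NewInnerProduct} is an inner product on $E$ (symmetric, bounded by Cauchy--Schwarz, and positive definite by Poincar\'e's inequality, as already noted). For $T(u,\lambda)v := -v-\Delta^{-1}(f'(u,\lambda)v)$ one computes
\begin{equation*}
\langle T(u,\lambda)v_1,v_2\rangle_E = -\int_\Omega \langle\nabla v_1,\nabla v_2\rangle\,\d{\bf x} + \int_\Omega f'(u,\lambda)\,v_1 v_2\,\d{\bf x} = S^{(2)}(u,\lambda)(v_1,v_2),
\end{equation*}
using the identity $\langle \Delta^{-1}\varphi,v\rangle_E=-\int_\Omega\varphi\,v\,\d{\bf x}$ for $\varphi\in H^{k-2}(\Omega)$, $v\in E$, which follows from Green's identity. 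Multiplication by $f'(u,\lambda)\in\mathcal C^1(\overline\Omega)$ is continuous $E\to E$, and $\Delta^{-1}:E\hookrightarrow H^{k-2}(\Omega)\xrightarrow{L^{-1}}E$ is compact, so $T(u,\lambda)=-\mathrm{Id}-K$ with $K$ compact is Fredholm of index $0$. The identity above makes $T(u,\lambda)$ symmetric with respect to $\langle\cdot,\cdot\rangle_E$, so $\ker T \perp_E \rg T$; since $\rg T$ is closed and $\dim\ker T=\mathrm{codim}\,\rg T<\infty$ by Fredholmness, the orthogonal decomposition $E=\ker T\oplus \rg T$ holds and the associated projection $\mathrm{pr}$ is continuous.

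Finally I would verify Assumption \ref{as:T2}. Define $\nabla_y S(\lambda)(u):=\mathrm{pr}(-u-\Delta^{-1}(f(u,\lambda)))\in\rg T(u,\lambda)$. For any $v\in\rg T(u,\lambda)$ the projection acts trivially inside the inner product, and another application of Green's identity gives
\begin{equation*}
\langle \nabla_y S(\lambda)(u),v\rangle_E = -\int_\Omega\langle\nabla u,\nabla v\rangle\,\d{\bf x}+\int_\Omega f(u,\lambda)\,v\,\d{\bf x} = S^{(1)}(u,\lambda)(v),
\end{equation*}
which is the required identity. If one normalises by translating to a critical point $u$ (so that $S^{(1)}(u,\lambda)=0$), then $\nabla_y S(\lambda)(u)=0$, matching the vanishing condition in Assumption \ref{as:T2}. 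The main technical point is the compactness input giving the Fredholm property of $T(u,\lambda)$; everything else is bookkeeping around Green's identity and the spectral splitting of a symmetric Fredholm operator on a Hilbert space.
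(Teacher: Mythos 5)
Your proof is correct and follows essentially the same route as the paper, which presents the argument as the sequence of observations immediately preceding the proposition (the sentence ``The observations imply the following proposition.'' stands in for an explicit proof). You fill in the same steps—Green's identity for the weak form and for the identity $\langle T(u,\lambda)v_1,v_2\rangle_E = S^{(2)}(u,\lambda)(v_1,v_2)$, the compact-perturbation-of-the-identity argument for the Fredholm property, the symmetric splitting $E=\ker T\oplus\rg T$, and the verification of the partial-gradient pairing—merely spelling out the integration by parts that the paper leaves implicit.
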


We conclude that \cref{thm:SplittingLemmaInfinte} applies to $S$ such that the bifurcation behaviour of \eqref{eq:PDE} reduces to finite-dimensional catastrophe theory.

\subsection{Augmented systems for the example problem}\label{subsec:contaugsys}
Let us write down the augmented systems for \eqref{eq:PDE} provided by  \cref{prop:fold,prop:HilbertCusp,prop:SW,prop:butterfly}.

\paragraph{Solution}
\begin{equation}\label{eq:PDESolutionEx}
\left\{\begin{aligned}
\Delta u + f(u,\lambda) &= 0\\
u|_{\p \Omega} &=0
\end{aligned}\right.
\end{equation}

\paragraph{Fold $(A_2)$}
\begin{equation}\label{eq:FoldPDEEx}
\left\{\begin{aligned}
\Delta \alpha + f'(u,\lambda)\alpha &= 0\\
\alpha|_{\p \Omega} &=0\\
\| \alpha \|_{L^2} &=1
\end{aligned}\right.
\end{equation}

\paragraph{Cusp $(A_3)$}
\begin{equation}\label{eq:CuspPDEEx}
\int_\Omega f''(u,\lambda)\alpha^3 \d {\bf x} = 0
\end{equation}

\paragraph{Swallowtail $(A_4)$}
\begin{equation}\label{eq:SWPDEEx}
\left\{\begin{aligned}
\Delta v + f'(u,\lambda)v + f''(u,\lambda)\alpha^2 &= 0\\
v|_{\p \Omega} &=0\\
\end{aligned}\right.
\end{equation}

\[
%\int_\Omega \left( f'''(u,\lambda) \alpha^4 +3 f''(u,\lambda) \alpha^2 v \right) \ \d {\bf x}=0
\int_\Omega \left( f'''(u,\lambda) \alpha^4 - 3 f'(u,\lambda) v^2  \right) \ \d {\bf x}=0
\]

%\begin{proof} Using the weak formulation of \eqref{eq:SWPDEEx} with test-function $v$ we obtain
%\[
%\int_\Omega \left( - \langle \nabla v, \nabla v\rangle + f'(u,\lambda)v^2 + f''(u,\lambda)\alpha^2 v \right) = 0.
%\]
%This is used to simplify the bifurcation test equation
%\[
%\int_\Omega \left( f'''(u,\lambda) \alpha^4 + 6 f''(u,\lambda) \alpha^2 v - 3 \langle \nabla v,\nabla v\rangle + 3 f'(u,\lambda)v^2 \right) \d {\bf x} =0,
%\]
%which is provided by \cref{thm:ASeriesDetect} via the 4th Bell-Polynomial.
%\end{proof}

\paragraph{Butterfly $(A_5)$}
\begin{equation}\label{eq:ButterflyPDEEx}
\left\{\begin{aligned}
\Delta w + f'(u,\lambda)w + 3f''(u,\lambda)\alpha v + f'''(u,\lambda) \alpha^3 &= 0\\
w|_{\p \Omega} &=0\\
\end{aligned}\right.
\end{equation}
\[
\int_\Omega \left(
f''''(u,\lambda)\alpha^5
-15 f''(u,\lambda) \alpha v^2 
+10 f''(u,\lambda) \alpha^2 w 
\right) \d {\bf x}
=0.
\]

%\begin{proof}
%Using the weak formulation of \eqref{eq:ButterflyPDEEx} with test-function $v$ we obtain
%\[
%\int_\Omega \left( - \langle \nabla v, \nabla w\rangle + f'vw + 3f''\alpha v^2 + f'''\alpha^3v \right) \d {\bf x}  = 0.
%\]
%This is used to simplify the bifurcation test equation
%\[
%\int_\Omega \left(
%f''''\alpha^5
%+10 f''' \alpha^3v 
%+10 f'' \alpha^2 w 
%+15 f'' \alpha v^2 
%+ 10 f' v w
%- 10 \langle \nabla v,\nabla w\rangle 
%\right) \d {\bf x}
%=0
%\]
%provided by \cref{thm:ASeriesDetect} via the 5th Bell-Polynomial. In the expressions above $f'$, $f''$, $f'''$, $f''''$ are evaluated at $(u,\lambda)$.
%\end{proof}

\begin{remark}
We can impose
\[\langle v, \alpha \rangle_E = \int_\Omega \langle \nabla v, \nabla \alpha \rangle \d {\bf x} =0\]
in \eqref{eq:SWPDEEx} or $\langle v, w \rangle_E = 0 $ in \eqref{eq:ButterflyPDEEx} as discussed in \cref{rem:uniquenessnotnecessary}. The uniqueness condition allows us to interpret $v$ as $F''(0)$ and $w$ as $F'''(0)$.
\end{remark}

\paragraph{Singularity $A_n$, $n \ge 4$}
The equation to determine $F^{(n-2)}(0) \colon \Omega \to \R$ reads
\begin{equation}\label{eq:ContRecAnBratu}
\left\{\begin{aligned}
\Delta F^{(n-2)}(0) + B_{n-2}(\alpha,F''(0),\ldots,F^{(n-2)}(0)) &= 0\\
\int_\Omega \langle \nabla F^{(n-2)}(0), \nabla \alpha \rangle \d {\bf x} &=0\\
F^{(n-2)}(0)|_{\p \Omega} &=0.
\end{aligned}\right.
\end{equation}
To obtain the correct expression for the term $B_{n-2}(\alpha,F''(0),\ldots,F^{(n-2)}(0))$ in \eqref{eq:ContRecAnBratu}, the Bell polynomial $B_{n-2}$ is first presented in its monomial form as in \eqref{eq:BellPolysMonForm}, then the summation sign $+$ is replaced by $+ f^{(\mathrm{degree})}(u,\lambda)$, where $\mathrm{degree}$ denotes the degree of the monomial it is multiplied with. Finally, the arguments $\alpha,F''(0),\ldots,F^{(n-2)}(0)$ are substituted into the expression.
The bifurcation test equation is given as
\begin{equation}\label{eq:ContTestAnBratu}
\int_\Omega B_n(\alpha,F''(0),\ldots,F^{(n-2)}(0),0,0) \d {\bf x} =0.
\end{equation}
(cf.\ \cref{cor:ReqJet})

To obtain the correct expression for the term $B_n(\alpha,F''(0),\ldots,F^{(n-2)}(0),0,0)$ in \eqref{eq:ContTestAnBratu} the Bell polynomial $B_{n}$ is first presented in its monomial form. Summands, which consist of a degree 2 monomial $c x_i x_j$ are replaced by
\[
c\left(-\langle \nabla F^{(i)}(0), \nabla F^{(j)}(0) \rangle + f'(u,\lambda)F^{(i)}(0)F^{(j)}(0)\right),
\]
where $c$ denotes the occurring factor. In the other summands we add $f^{(\mathrm{degree}-1)}(u,\lambda)$ as a factor, whereas $\mathrm{degree}$ is the degree of the monomial making up the summand.

\begin{remark}\label{rem:SimpleStructure}
We see that the fact that $S$ depends only quadratically on $u$ has led to a significant simplification compared to the formulas for general functionals. Indeed, the first equation in \eqref{eq:ContRecAnBratu} is of the form
\[
\Delta F^{(n-2)}(0)(x) + \kappa(x) F^{(n-2)}(0)(x) = g(x), \qquad x \in \Omega
\]
for a smooth map $g \colon \Omega \to \R$ and with $\kappa(x) = f'(u,\lambda)$. The map $ F^{(n-2)}(0)\colon \Omega \to \R$ is sought.
While the initial PDE \eqref{eq:PDE} is a semilinear Poisson equation \cite{Hsiao2006,konishi1973}, the equations which are added in the augmented system are linear PDEs.
If $\kappa (x) \ge 0$ or $\kappa (x) \le 0$ for all $x \in \Omega$ then this is the generalised Poisson equation considered in \cite{GenPoissonDipl}.
\end{remark}

\begin{example}
Let $f(u,\lambda) = \sum_{l=1}^n \frac 1 {l!} \lambda_l u^l$.
For any choice of $\lambda$ the constant function $u=0$ is a solution to the problem \eqref{eq:PDE}. The {condition for} a singularity of type $A_n$ at $(\lambda,u)=(\lambda,0)$ {is fulfilled} if and only if {$-\lambda_1$ is a simple eigenvalue of the Dirichlet Laplacian $\Delta \colon H_0^1(\Omega) \cap H^k(\Omega) \to H^{k-2}(\Omega)$}
and
\[
\lambda_i \begin{cases} =0, &   2 \le i \le n-1\\
			\not =0, &   i=n
			\end{cases}
\]
{provided that $\int_\Omega \alpha^i \d {\bf x} \not=0$ for $3 \le i \le n+1$, where $\alpha$ is the eigenfunction to the eigenvalue $-\lambda_1$.}
\end{example}

\begin{proof}
The function $u=0$ solves \eqref{eq:PDESolutionEx}. We have $f^{(l)}(0,\lambda) = \lambda_l$.
The system \eqref{eq:FoldPDEEx} reads
\[
\left\{\begin{aligned}
\Delta \alpha + \lambda_1 \alpha &= 0\\
\alpha|_{\p \Omega} &=0\\
\| \alpha \|_{L^2} &=1
\end{aligned}\right.
\]
It has a unique solution if and only if $-\lambda_1$ is a simple eigenvalue of the Dirichlet Laplacian. {The} cusp condition simplifies to $\lambda_2 =0$. Assuming \eqref{eq:PDESolutionEx}, \eqref{eq:FoldPDEEx}, \eqref{eq:CuspPDEEx} we see that $F''(0)=0$ solves \eqref{eq:ContRecAnBratu} with $n=4$. The swallowtail condition is fulfilled if and only if $\lambda_3=0$. Assuming that $F''(0)=\ldots=F^{(t-2)}(0) = 0$ and $\lambda_3=\lambda_4=\ldots=\lambda_{t-1}=0$ we see that $F^{(t-1)}(0)=0$ solves \eqref{eq:ContRecAnBratu} with $n=t+1$. The bifurcation test equation \eqref{eq:ContTestAnBratu} for $A_{t+1}$ is fulfilled if and only if $\lambda_{t}=0$. The claim follows by induction.
\end{proof}

% section numerical example

\subsection{Numerical experiment}

{The classical Bratu problem considers the PDE
\[
\Delta u + \lambda_1 \exp\left( \frac{u}{1+\lambda_2 u} \right) = 0
\]
on a $d$-dimensional cube with zero Dirichlet boundary values. The boundary value problem is popular to study fold and cusp bifurcations \cite{Mohsen201426}.}
In the following we consider the domain $\Omega = (0,1)\times(0,1)$
and set
\begin{align*}
f(t,\lambda)=\lambda_1 \exp\left({\frac t{\lambda_2 t+1}}\right)+\lambda_3 \sin(\lambda_1 t)
\end{align*}
{in the Dirichlet problem \eqref{eq:PDE} given as
\begin{equation*}
\left\{\begin{aligned}
\Delta u + f(u,\lambda) &= 0\\
u|_{\p \Omega} &=0.
\end{aligned}\right.
\end{equation*}
The considered problem coincides with the Bratu problem for $\lambda_3 = 0$.}
Numerical experiments have been performed in \cite{Continuation}.
{The third parameter has been added to create a swallowtail bifurcation, which we will find numerically.
As the topological boundary $\p \Omega$ of the domain is not regular, the assumptions in \cref{subsec:Setup} do not hold. However, the following numerical experiment illustrates on a classical example how the derived augmented systems can be used to locate bifurcations.}
As before, the primitive w.r.t.\ the first argument is denoted by $\bar f$, i.e.\ $\frac{\p}{\p t} \bar f (t,\lambda) = f(t,\lambda)$.

\subsubsection{Discretisation via a discrete Lagrangian method}

In view of \cref{sec:SplittingandBifurTest,subsec:Setup} it appears natural to use a variational method (see \cite[VI.6]{GeomIntegration} or \cite{MarsdenWestVariationalIntegrators}, for instance) to discretise \eqref{eq:PDE}.
We obtain a mesh on $\Omega$ as the cross product of a uniform mesh in the $x$ direction with $N$ interior mesh points and spacing $\Delta x = \frac 1 {N+1}$ and a uniform mesh in the $y$ direction with $M$ interior mesh points and spacing  $\Delta y = \frac 1 {M+1}$.
Real-valued, continuous functions $u \colon \Omega \to \R$ are represented as matrices $U \in \R^{N \times M}$ where the component $U_{i,j}$ corresponds to the value $u(i \Delta x, j \Delta y)$ on the interior grid. Alternatively, we can flatten the matrix $U$ to a vector $\bar u \in \R^{N\cdot M}$ such that $U_{i,j}=\bar u_{(j-1)N+i}$.
For the functional 
\[
S(u,\lambda) = \int_{\Omega} \left( - \frac 12  (u_x^2+u_y^2)  + \bar f(u,\lambda)\right) \d x \d y
\]
from \cref{subsec:Setup} we consider the discrete functional
\[
S_\Delta(U,\lambda) 
= \sum_{i=0}^N \sum_{j=0}^M \left( 
- \frac 12 \left( \frac{ (U_{i,j}-U_{i+1,j})^2}{\Delta x^2} + \frac{ (U_{i,j}-U_{i,j+1})^2}{\Delta y^2} \right) + \bar f(U_{i,j},\lambda)
\right)
\]
with $U_{0,j}=U_{i,0}=U_{N+1,j}=U_{i,M+1}=0$.
For $k \in \N$ define 
\[
\overline D (k) := \begin{pmatrix}
-2&1\\
1&-2&1\\
&\ddots & \ddots & \ddots\\
&&1&-2&1\\
&&&1&-2\\
\end{pmatrix} \in \R^{k \times k}.
\]
The matrices
\[
D_{xx} = \frac 1 {\Delta x^2} \overline D(N) \in \R^{N \times N}, \quad
D_{yy} = \frac 1 {\Delta y^2} \overline D(M) \in \R^{M \times M}
\]
are the standard central finite-difference discretisations of the operators $\frac{\p^2}{\p x^2}$ and $\frac{\p^2}{\p y^2}$.
Define
\[
L:=\Id_M \otimes D_{xx} + D_{yy}\otimes \Id_N
\]
where $\otimes$ denotes the Kronecker tensor product. Notice that 
\[
S_{\Delta}(\bar u,\lambda) = \frac 12 \bar u^T L \bar u + \bar f(\bar u,\lambda).
\]
In the expression above the function $\bar f$ is evaluated component-wise, i.e\ the $k^{\text{th}}$ component of $\bar f(u,\lambda)$ is given by $\bar f(u_k,\lambda)$.
We have
\begin{equation}\label{eq:DSDelta}
\D_u S_\Delta (\bar u,\lambda) = L \bar u + f(\bar u,\lambda). %= U D_{yy} + D_{xx} U + f(U,\lambda),
\end{equation}

\begin{remark}\label{rem:DiscreteLagrangianMethod}
The expression \eqref{eq:DSDelta} is the flattening of the matrix-valued function
\[
(U,\lambda) \mapsto U D_{yy} + D_{xx} U + f(U,\lambda),
\]
which can be obtained by a direct discretisation of the PDE \eqref{eq:PDE} using standard central difference approximations for second derivatives.
We see that the equation $\D_u S_\Delta (\bar u,\lambda) = 0$ obtained from an application of the discrete Lagrangian principal to $S$ leads to the same equations as a discretisation of the Laplacian operator using central finite-differences.
\end{remark}

\subsubsection{Application of pseudoarclength continuation to augmented discrete systems}\label{subsec:ContiAugDiscrete}

To locate a singularity of a given type one can write down the augmented system of the singularity and solve the system using an iterative solver. This is called a {\em direct method}.
For the convergence of the solver a good initial guess is required which is usually not available a priori. Instead, one may employ a continuation method and first continue a known solution along one parameter until a fold singularity is detected, then augment the system using the fold bifurcation test equation and continue a line of folds until a cusp is detected and so on. In the following numerical experiment we will use pseudoarclength continuation \cite{DoedelConti}. Our strategy of performing conceptually simple one-dimensional continuations of singularities of high codimension can be contrasted to higher-dimensional continuation of solutions \cite{Henderson2007}. More information on continuation methods can be found in
\cite{Allgower_2003,Continuation,Deuflhard2011,KrauskopfNumericalContinuation}.

Whenever a bifurcation is detected one has the option to locate the singularity of the discrete system exactly using a direct method.
Generally speaking, unless one has prior knowledge about the bifurcation diagram or is interested only in a specific parameter region one needs to search in different directions for bifurcations. 
Once arrived at a high codimensional bifurcation, the discretisation parameter is decreased and a direct method is applied to approximate the location of the singularity in the continuous system.
As starting values interpolated data from the coarser systems can be utilised.

To simplify notation in the following we will write $u$ instead of $\bar u$, $G$ for $\D S_\Delta$ and $G_u$ for the Jacobian matrix $\D_u G$.
%
%Equation \eqref{eq:PDE} is discretised as
%\[
%G(u,\lambda)=0
%\]
%with
%\[
%G \colon \R^{N \cdot M} \times \R^3 \to \R, \quad G(u,\lambda) := L u + f(u,\lambda).
%\]
%The component $u_{(j-1)N+i}$ approximates a solution of \eqref{eq:PDE} at the point $(i\Delta x, j \Delta y)$. The function $f$ is evaluated component-wise, i.e\ the $k^{\text{th}}$ component of $f(u,\lambda)$ is given by $f(u_k,\lambda)$.
%
%\begin{remark}\label{rem:DiscreteLagrangianMethod}
%Let $N=M$. The function $G$ is a discretisation of $\D S$ from \cref{subsec:Setup}. The primitive
%\[
%\bar G (u,\lambda) = \frac 12 u^T L u + \bar f(u,\lambda)
%\]
%of $G$ is a natural discretisation of $S$ which can be obtained by replacing integration by summation over the grid and replacing first derivatives either by forward or backward standard finite differences. 
%This shows that applying a discrete Lagrangian method again leads to $G(u,\lambda)=0$. In other words, discretisation commutes with building the primitive.
%\end{remark}
%
\paragraph{Fold}
Fixing $(\lambda_2,\lambda_3)=(0,0)$ and starting at $\lambda_1=0$ we continue the solution $u=0\in \R^{N \cdot M}$ by applying pseudoarclength continuation to \[G(u(s),\lambda_1(s),\lambda_2,\lambda_3)=0\] until we find a fold. The singularity is detected when $\lambda_1(s)$ changes from being increasing to decreasing in $s$. (See the left plot in \cref{fig:Conti}.)

\begin{figure}
\begin{center}
\includegraphics[width=0.45\textwidth]{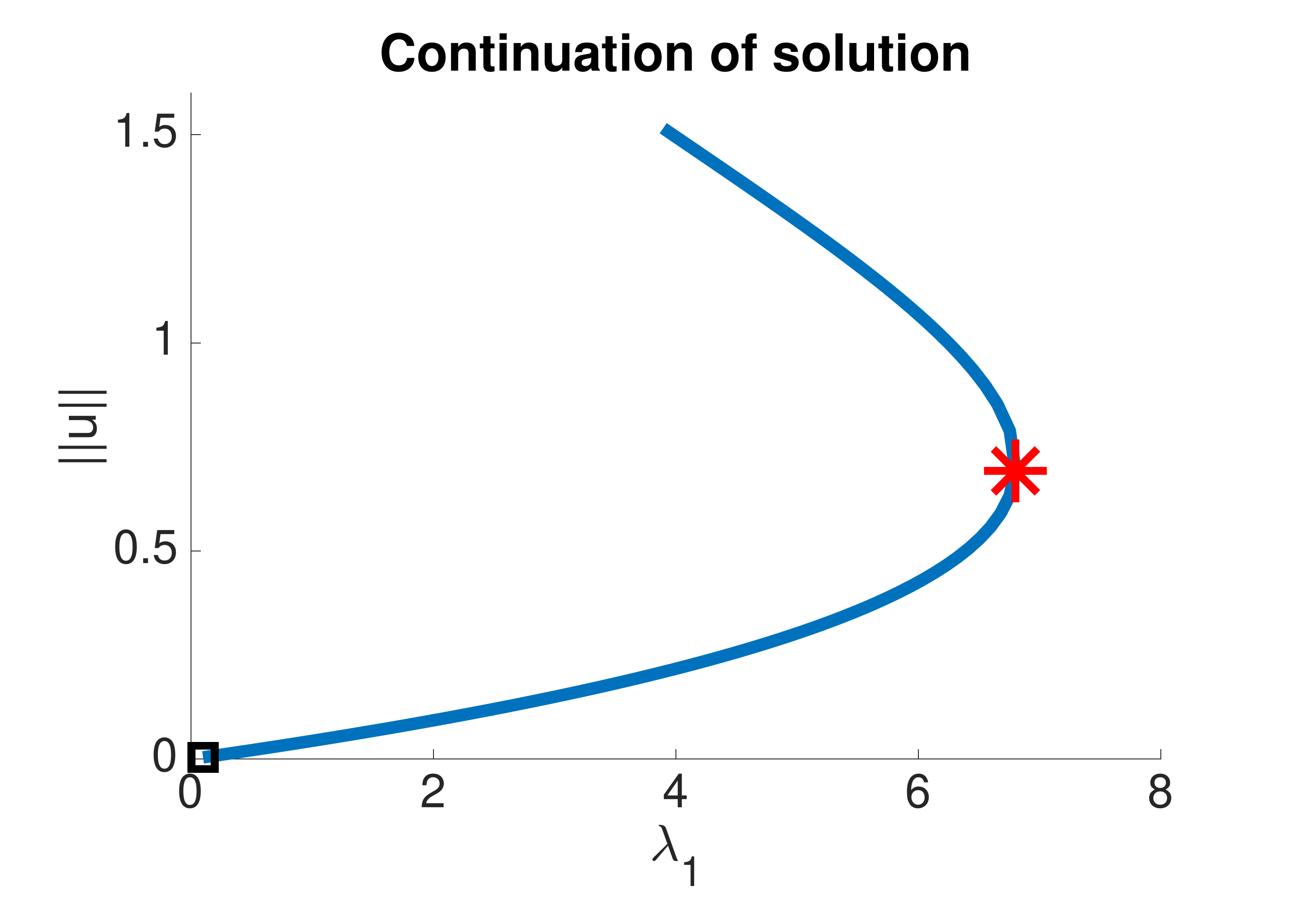}
\includegraphics[width=0.45\textwidth]{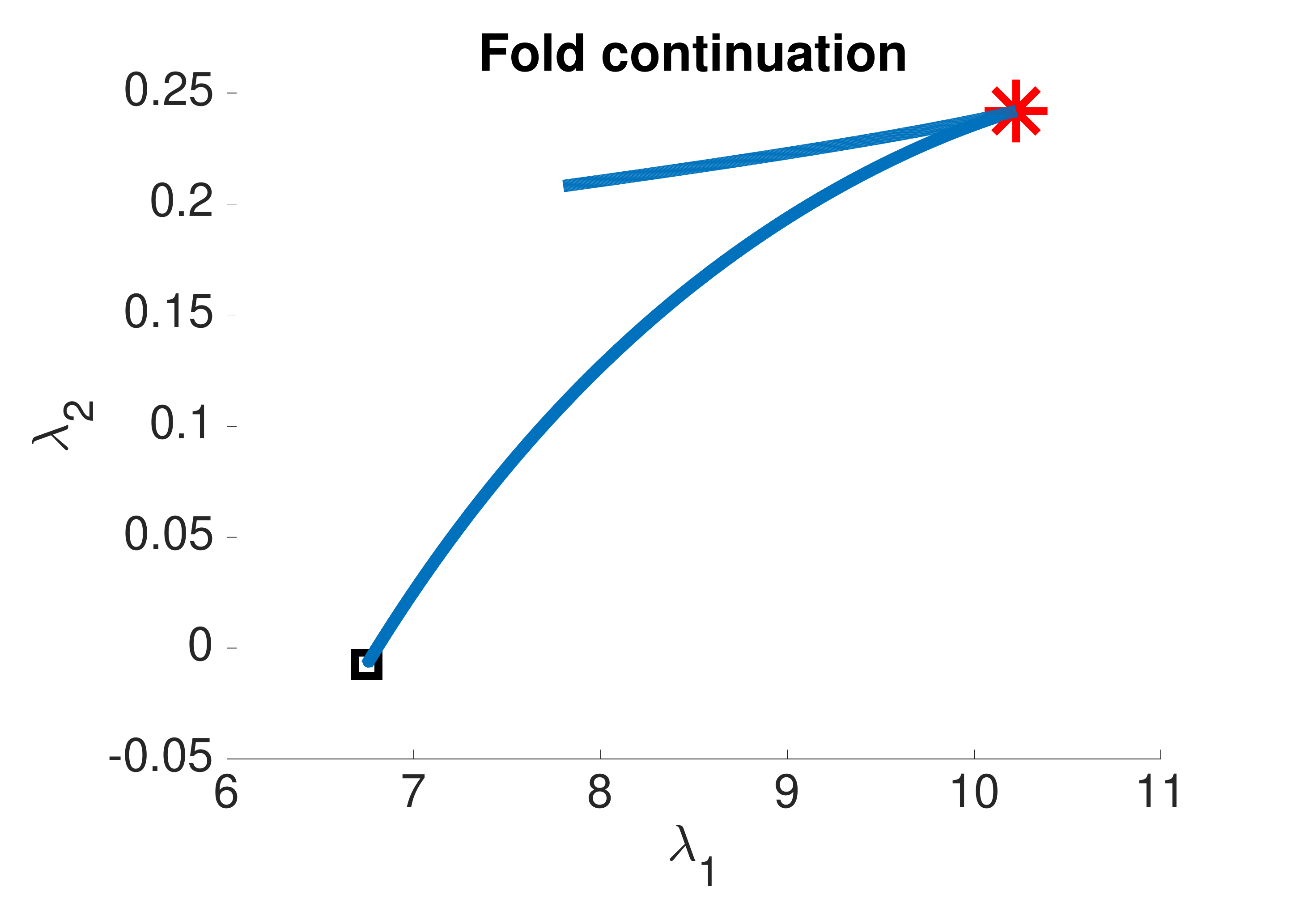}

\end{center}
\caption{Numerical experiment with $(N,M)=(15,15)$.
The left figure shows a continuation of the solution $(u,\lambda)=(0,0)$ of $G(u,\lambda_1,\lambda_2,\lambda_3)=0$ along $\lambda_1$ while keeping $\lambda_2=0=\lambda_3$ fixed. A fold point is marked by $\ast$.
The right figure shows a continuation of the fold singularity by varying $\lambda_1$ and $\lambda_2$ while keeping $\lambda_3=0$ fixed. A cusp point is marked by $\ast$.
%The right figure shows a continuation of the cusp singularity by varying $\lambda_1$, $\lambda_2$ and $\lambda_3$.
%A swallowtail point is marked by $\ast$.
In each plot the square marker shows where the continuation was initiated.
}\label{fig:Conti}
\end{figure}

\paragraph{Cusp}
We allow $\lambda_2$ to be parameter dependent as well and apply pseudoarclength continuation to
\[
F^{(1)}(u(s),\alpha(s),\lambda_1(s),\lambda_2(s))=
\begin{pmatrix}
G(u(s),\lambda_1(s),\lambda_2(s),\lambda_3)\\
G_u(u(s),\lambda_1(s),\lambda_2(s),\lambda_3) \alpha(s)\\
\alpha(s)^T \alpha(s) - \Delta x \Delta y
\end{pmatrix}=0
\]
starting with the approximated fold data and a random, normalised guess for $\alpha$. (See the right plot in \cref{fig:Conti}.) During the continuation process we monitor the cusp condition
\begin{equation}\label{eq:CuspNumEx}
f_{uu}(u,\lambda_1(s),\lambda_2(s),\lambda_3)^T \alpha^{3} = 0,
\end{equation}
where raising $\alpha$ to the third power is to be understood component-wise. After detecting a change of sign in the left-hand side of \eqref{eq:CuspNumEx}, we improve the accuracy of the location of the singular point by solving
\[
\begin{pmatrix}
G(u(s),\lambda_1(s),\lambda_2(s),\lambda_3)\\
G_u(u(s),\lambda_1(s),\lambda_2(s),\lambda_3) \alpha(s)\\
\alpha(s)^T \alpha(s) - \Delta x \Delta y\\
f_{uu}(u,\lambda_1(s),\lambda_2(s),\lambda_3)^T \alpha^{3}
\end{pmatrix}=0
\]
using Newton iterations. 

\paragraph{Swallowtail}
We allow an $s$-dependence of $\lambda_3$ and apply pseudoarclength continuation to the system
\[
F^{(2)}(u(s),\alpha(s),\lambda_1(s),\lambda_2(s),\lambda_3(s))=
\begin{pmatrix}
G(u(s),\lambda_1(s),\lambda_2(s),\lambda_3(s))\\
G_u(u(s),\lambda_1(s),\lambda_2(s),\lambda_3(s)) \alpha(s)\\
\alpha(s)^T \alpha(s) - \Delta x \Delta y\\
f_{uu}(u,\lambda_1(s),\lambda_2(s),\lambda_3(s))^T \alpha^{3}
\end{pmatrix}=0
\]
starting from the calculated cusp position. During this process, we monitor the swallowtail condition 
\begin{equation}\label{eq:SWNumEx}
f_{uuu}(u,\lambda_1,\lambda_2,\lambda_3)^T \alpha^{4} +6 (f_{uu} (u,\lambda) . \alpha^{2})^T v + 3 v^T G_u(u,\lambda_1,\lambda_2,\lambda_3) v = 0,
\end{equation}
where $f_{uu} (u,\lambda) . \alpha^{2}$ denotes the component-wise product of the vectors $f_{uu} (u,\lambda)$ and $\alpha^{2}$. In each continuation step the vector $v$ is obtained as follows: we calculate the vector $(v^T,t)^T\in \R^{N \cdot M+1}$ which minimises the euclidean norm of
\[
\begin{pmatrix}
G_u (u,\lambda_1,\lambda_2,\lambda_3) & \alpha
\end{pmatrix}
\begin{pmatrix}
v\\ t
\end{pmatrix}
- f_{uu} (u,\lambda) . \alpha^{2}.
\]
For this we calculate the (under non-degeneracy conditions unique) solution to
\[
\left(G_u (u,\lambda_1,\lambda_2,\lambda_3)\cdot G_u (u,\lambda_1,\lambda_2,\lambda_3)^T + \alpha \alpha^T \right) \bar v = -f_{uu} (u,\lambda)
\]
and obtain $v$ as $v = G_u (u,\lambda_1,\lambda_2,\lambda_3)^T \bar v$. (Notice that transposition can be neglected since $G_u(u,\lambda_1,\lambda_2,\lambda_3)$ is symmetric.)

As the left-hand side of \eqref{eq:SWNumEx} changes sign, we detect a candidate for a swallowtail point at $(u_{\mathrm{SW}},\lambda^{\mathrm{SW}}) = (u_{\mathrm{SW}},\lambda^{\mathrm{SW}}_1,\lambda^{\mathrm{SW}}_2,\lambda^{\mathrm{SW}}_3)$.
(See \cref{fig:SWCondMonitor}.)
Indeed, the swallowtail condition \eqref{eq:SWNumEx} has a regular root at the swallowtail point which implies that the singularity is not further degenerate.

In \cref{fig:SWLineofCuspsLineofFolds} we do a fold continuation using pseudoarclength continuation applied to $F^{(1)}$ starting at a cusp point near $(u_{\mathrm{SW}},\lambda^{\mathrm{SW}})$ while fixing $\lambda_3$. The fold line is continued in both directions and shows the characteristics of a swallowtail bifurcation. 
This verifies that $(u_{\mathrm{SW}},\lambda^{\mathrm{SW}})$ is indeed a swallowtail point of the discretised system.

\begin{remark}
As is known from catastrophe theory \cite{Arnold1,lu1976singularity}, the only generic\footnote{persistent under small perturbations. For exact notions see \cite{Arnold1}, for instance.} bifurcations of codimension smaller or equal to 3 which critical points of a function $R \colon \R^n \to \R$ can undergo are $A_2$, $A_3$, $A_4$, $D^+_4$ and $D^-_4$. However, if a singularity $D^+_4$ or $D^-_4$ occurs along a line of cusp bifurcations then the swallowtail condition tends to $+\infty$ or $-\infty$ as one approaches the singularity. (See \cref{fig:SWCondMonitorD+,fig:SWCondMonitorD-}.) Moreover, at the singularity, $\mathrm {Hess}(R)(z)$ has a 2-dimensional kernel and the value of the swallowtail condition is not defined.
\end{remark}

% the amount of grid points $N,M$ leads to plots which qualitatively look the same. %We can conclude that the solution set to the Dirichlet boundary value problem for the PDE \eqref{eq:PDE} undergoes a swallowtail bifurcation near $(u_{\mathrm{SW}},\lambda^{\mathrm{SW}})$.

\begin{figure}
\begin{center}
\includegraphics[width=0.45\textwidth]{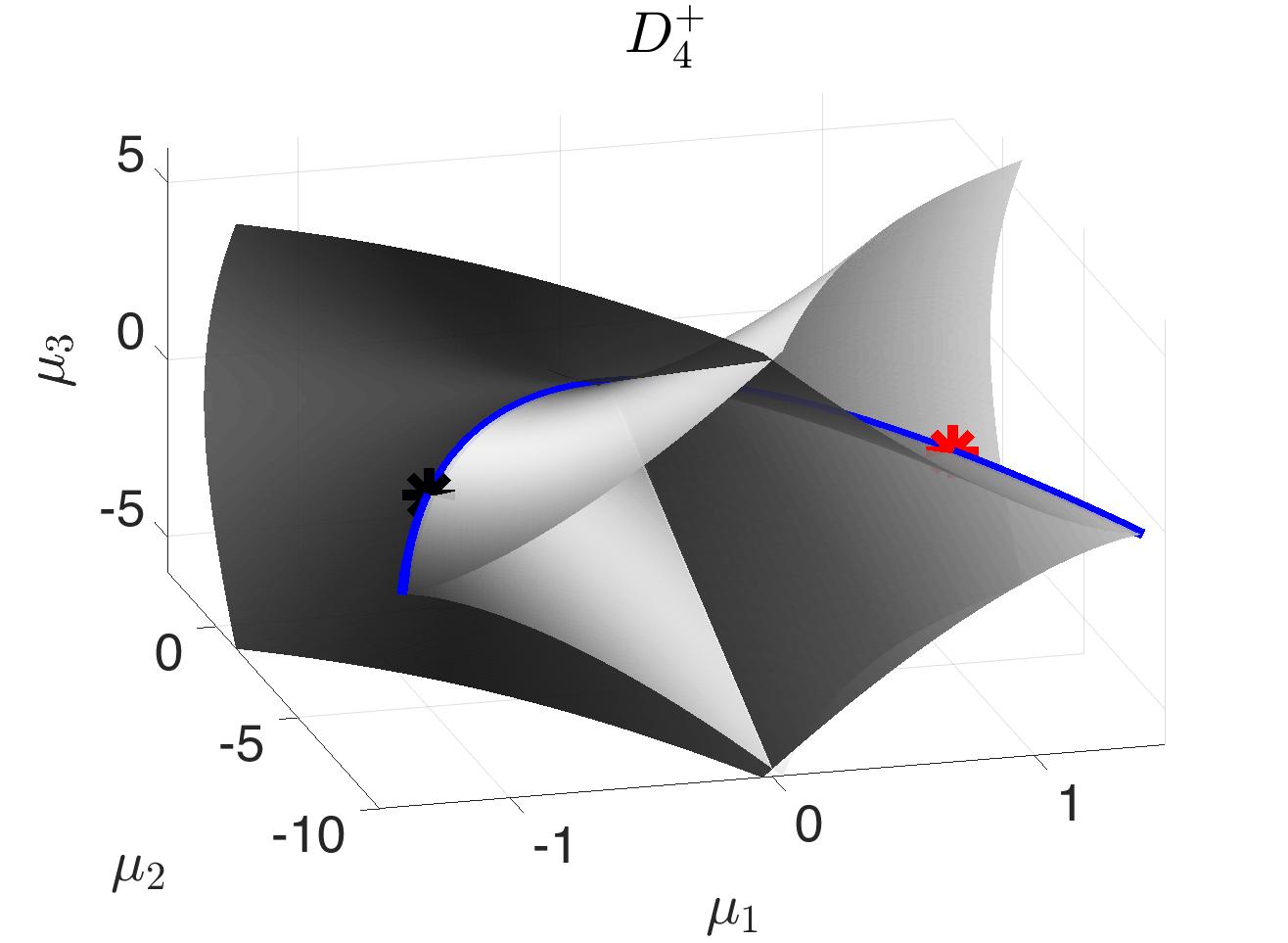}
\quad
\includegraphics[width=0.45\textwidth]{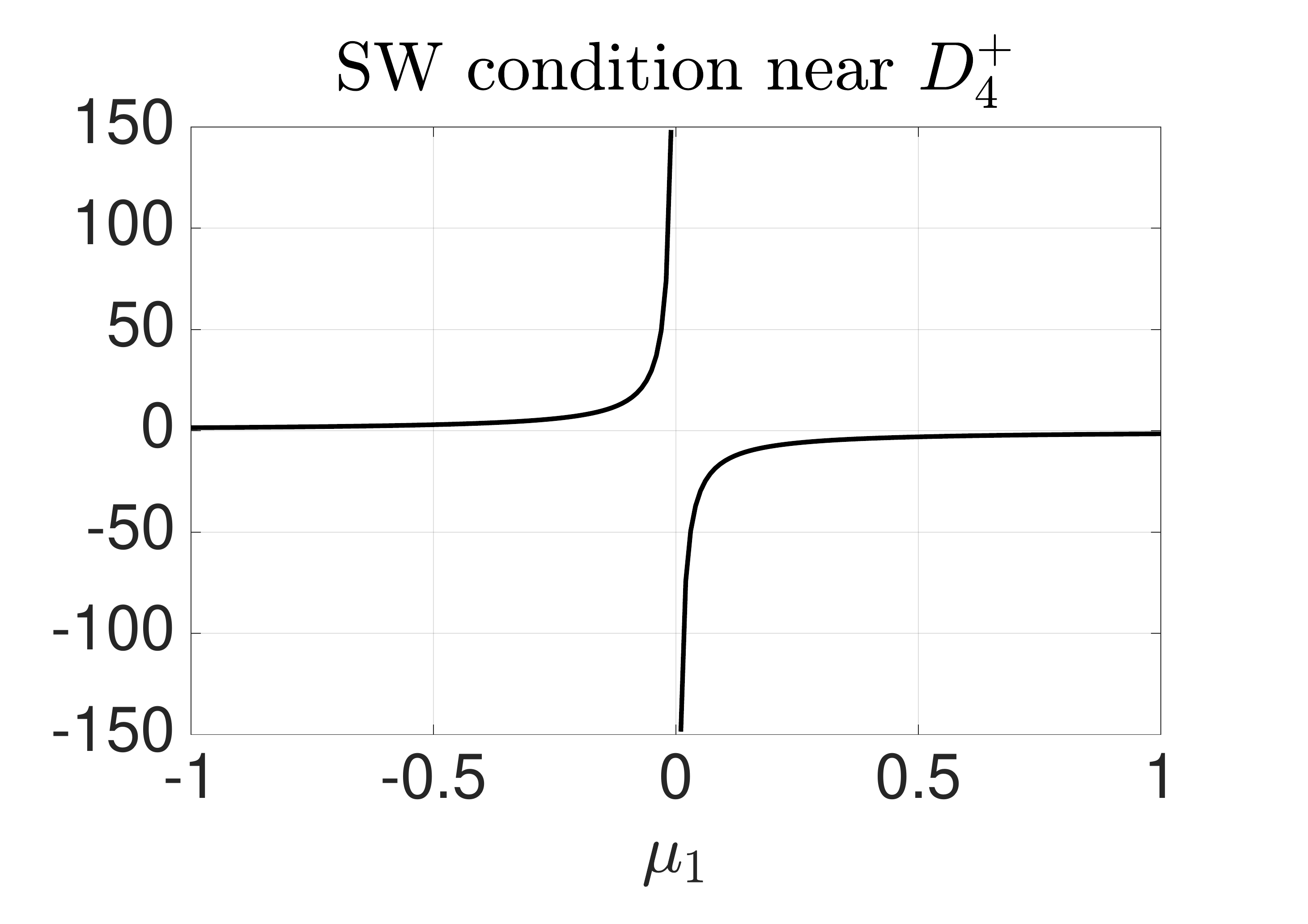}
\end{center}
\caption{
The left figure shows a singularity of type $D_4^+$, called {\em hyperbolic umbilic singularity}. It is a generic codimension 3 singularity in catastrophe theory. Here $\mu=(\mu_1,\mu_2,\mu_3) $ are parameters. Each point on the blue line (the edge) corresponds to a cusp singularity with an exception at $\mu=(0,0,0)$ where the $D_4^+$ singularity is located.
The two points marked with asterisks are cusp points with different signatures. 
The figure to the right shows the value of the bifurcation test equation for $A_4$ along the line of cusps parametrised by $\mu_1$. We see that the bifurcation test equation has a singularity at $\mu_1=0$, where the hyperbolic umbilic singularity is located, and that it changes sign at the singularity.
}\label{fig:SWCondMonitorD+}
\end{figure}

\begin{figure}
\begin{center}
\includegraphics[width=0.45\textwidth]{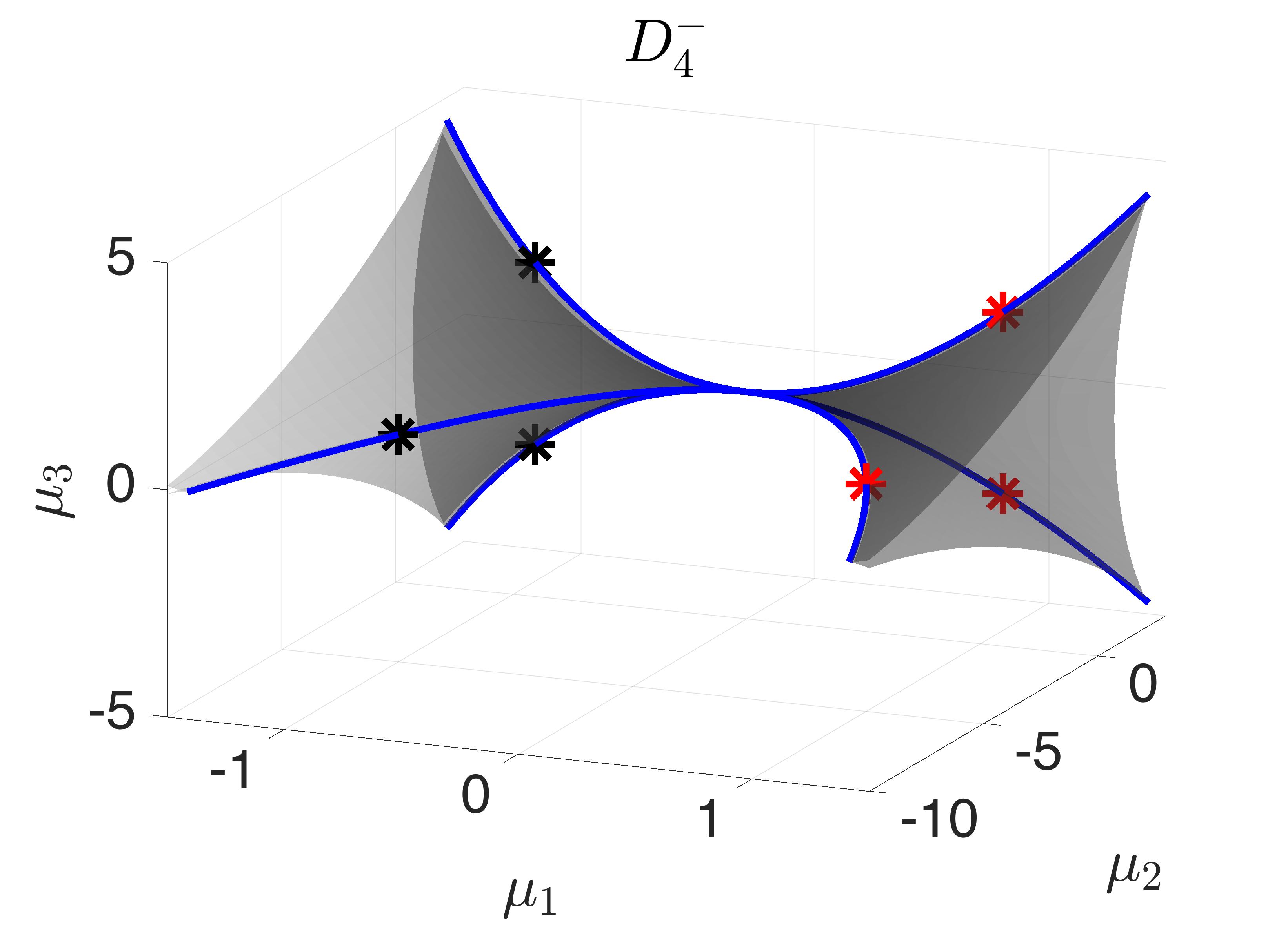}
\quad
\includegraphics[width=0.45\textwidth]{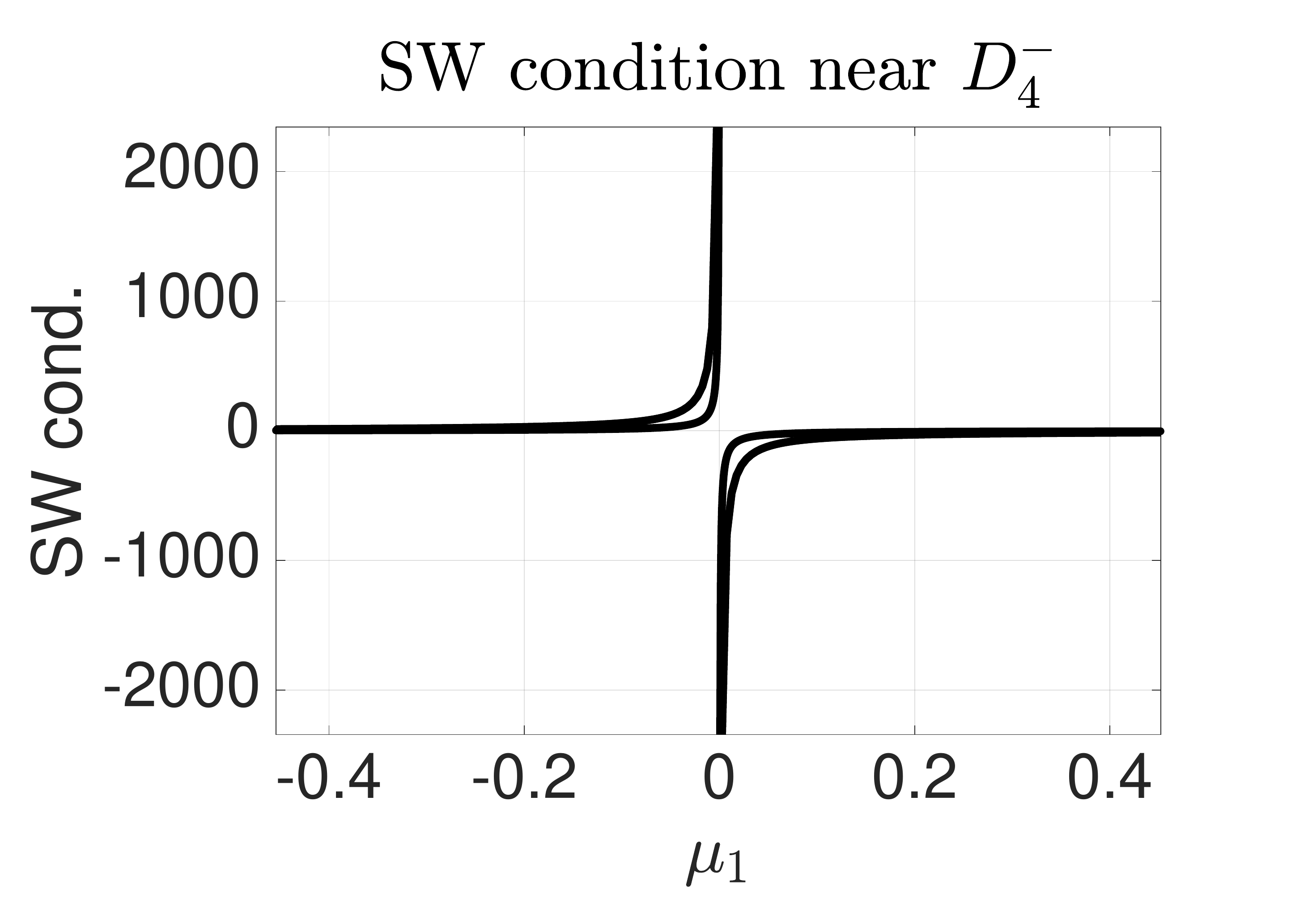}
\end{center}
\caption{
The left figure shows a singularity of type $D_4^-$, called {\em elliptic umbilic}. It is a generic codimension 3 singularity in catastrophe theory. Here $\mu=(\mu_1,\mu_2,\mu_3) $ are parameters. Each point on the blue lines (the edges) corresponds to a cusp singularity with an exception at $\mu=(0,0,0)$ where the $D_4^-$ singularity is located.
The three points on the line of cusps marked with asterisks with negative $\mu_1$ values are of the positive type. The other three marked points are of the negative type.
The figure to the right shows the value of the bifurcation test equation for $A_4$ (swallowtail condition) along the line of cusps parametrised by $\mu_1$. Due to symmetries in this example, two lines are plotted above each other in the $\mu_1<0$ regime as well as in the $\mu_1>0$ regime. When the elliptic singularity is approached along a line of negative cusps then the swallowtail condition tends to $-\infty$. Analogously, it tends to $+\infty$ if the singularity is approached through positive cusp points.
}\label{fig:SWCondMonitorD-}
\end{figure}

\begin{figure}
\begin{center}
\includegraphics[width=0.45\textwidth]{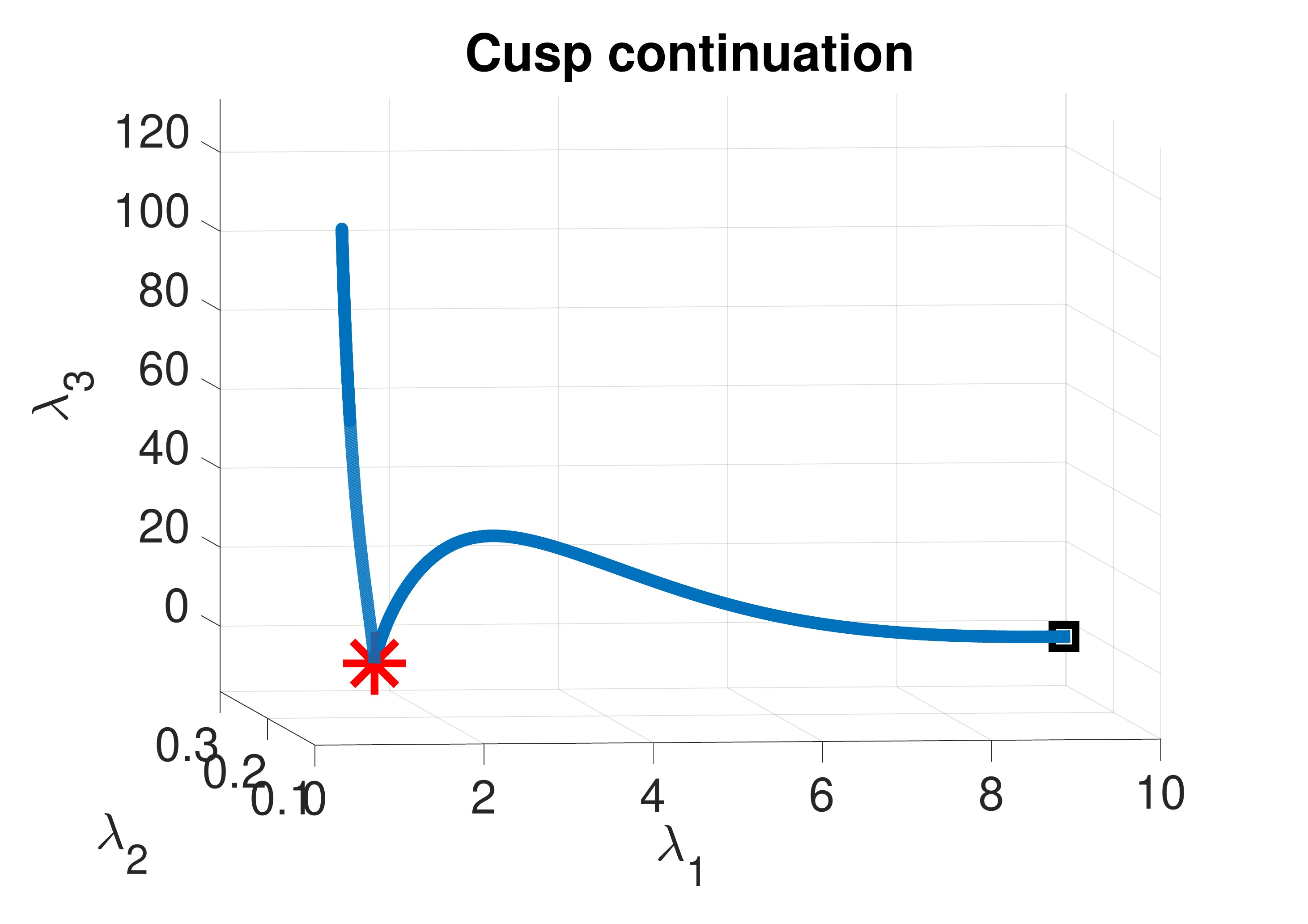}
\quad
\includegraphics[width=0.45\textwidth]{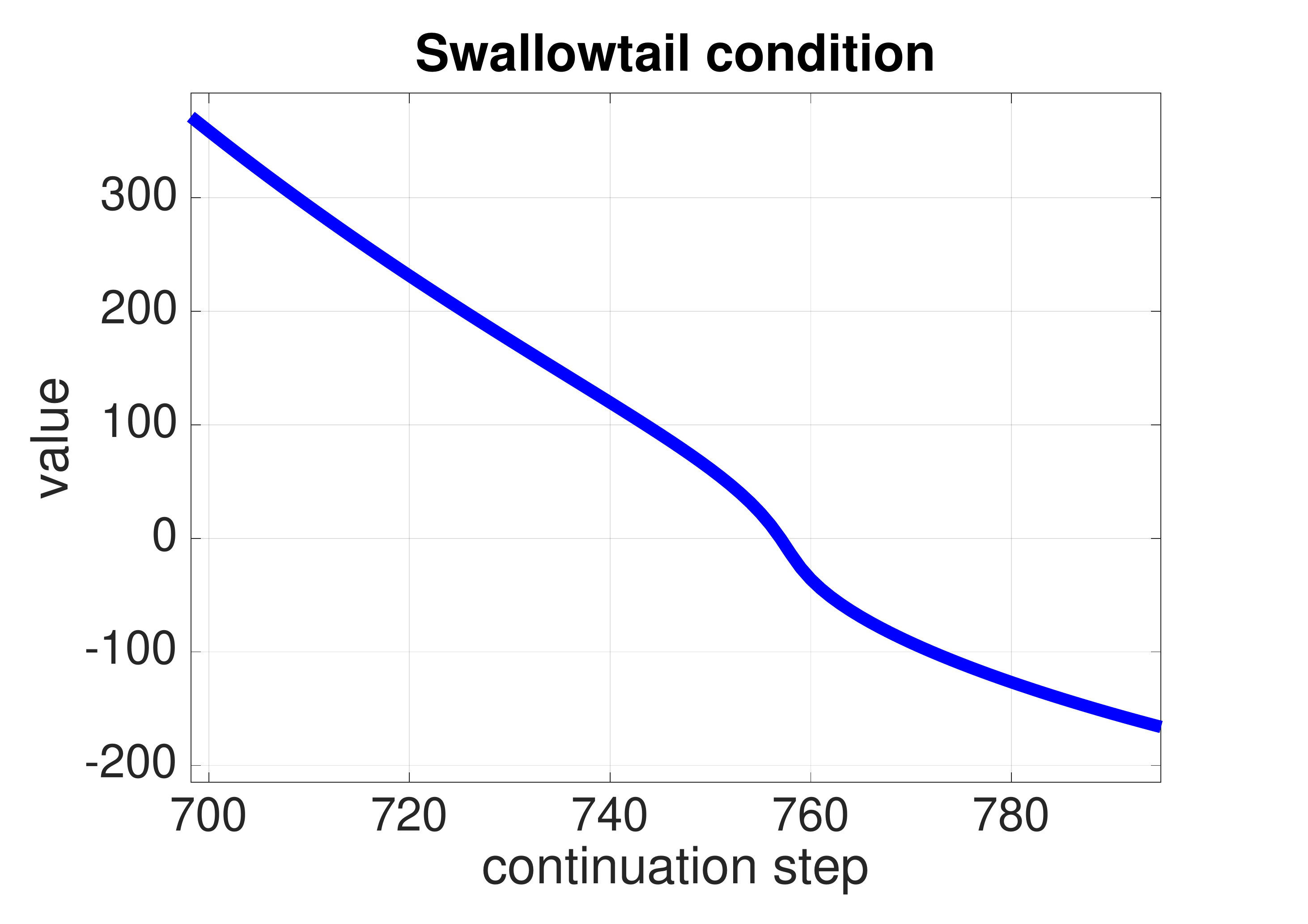}
\end{center}
\caption{
The left figure shows a continuation of the cusp singularity detected in \cref{fig:Conti}
by varying $\lambda_1$, $\lambda_2$ and $\lambda_3$. A swallowtail point is marked by $\ast$. The square marker shows where the continuation was initiated. As the line of cusps is numerically continued the swallowtail condition is monitored.
To the right we see the value of the swallowtail condition versus the number of continuation steps. The root corresponds to the swallowtail point. Since the root is non-degenerate, we know that the butterfly condition is not fulfilled such that the singularity is indeed a swallowtail point and not further degenerate.
}\label{fig:SWCondMonitor}
\end{figure}

\begin{figure}
\begin{center}
\includegraphics[width=0.5\textwidth]{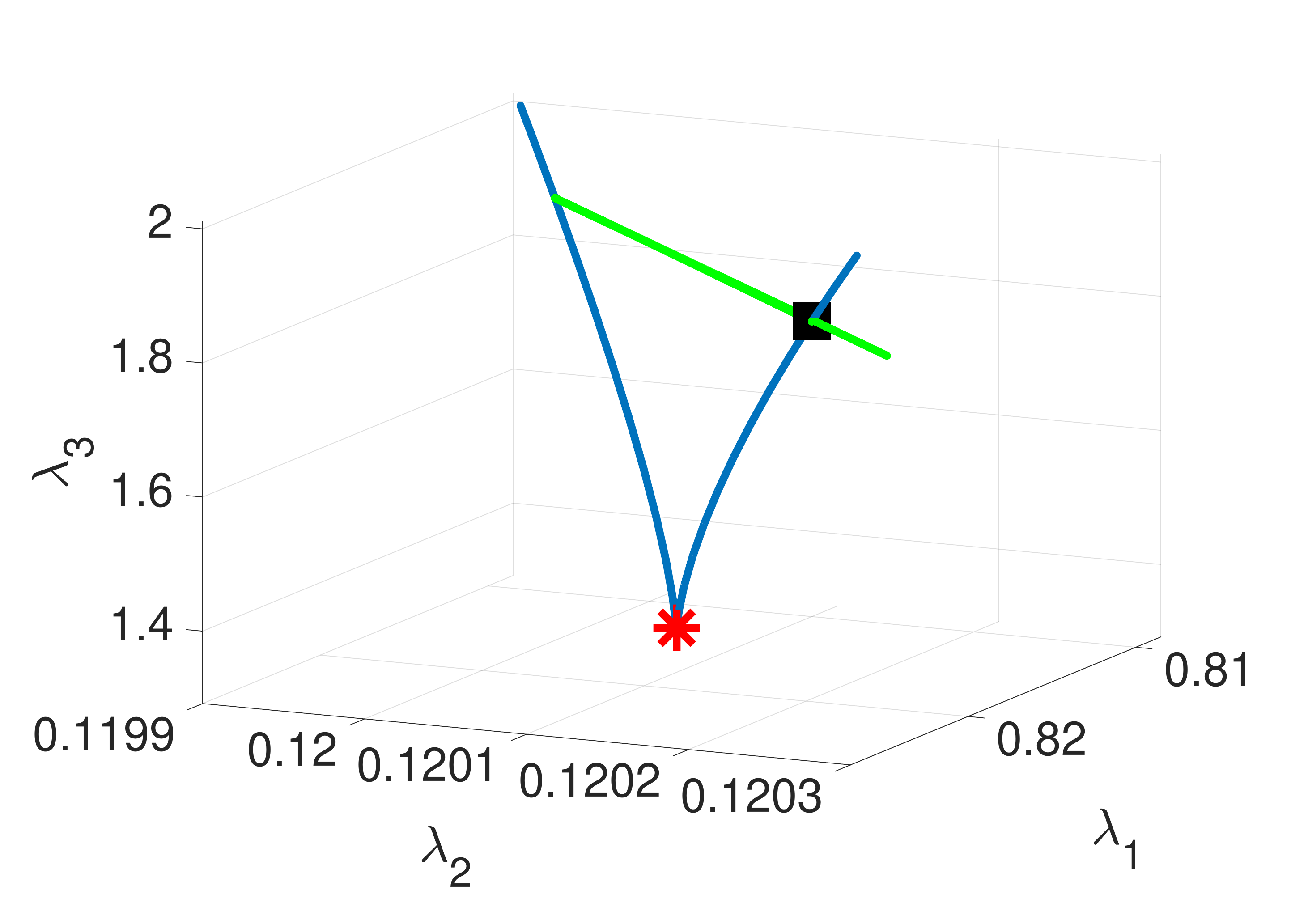}
\quad
\includegraphics[width=0.45\textwidth]{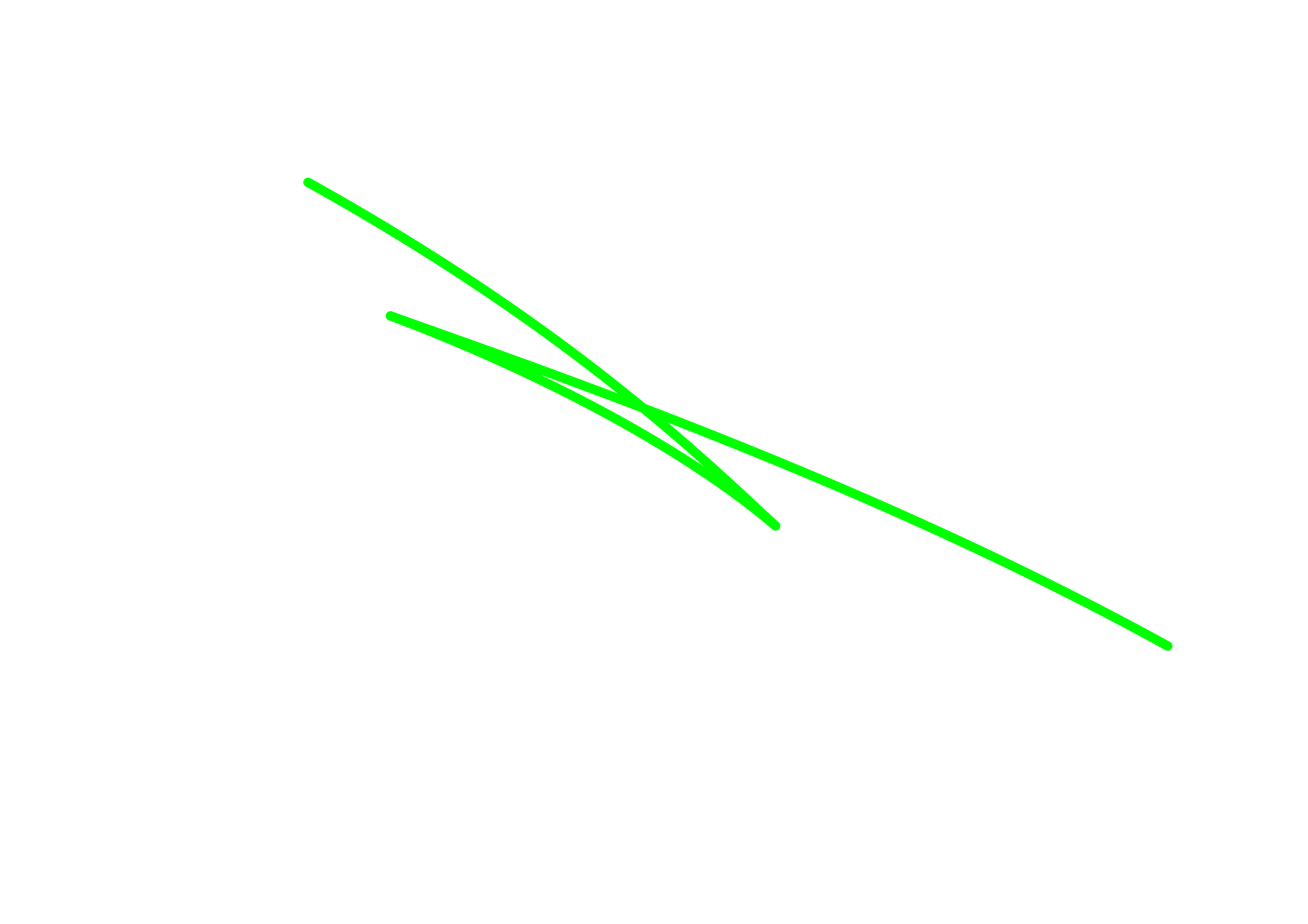}
\end{center}
\caption{Numerical experiment with $(N,M)=(10,10)$ showing a line of cusp points in the figure to the left. At the point marked with an asterisk the swallowtail condition \eqref{eq:SWNumEx} is fulfilled. From the point marked with a square we continue a branch of folds fixing the parameter $\lambda_3$. The figure to the right shows a projection of the line of folds along the $\lambda_3$-direction to the $\lambda_1$, $\lambda_2$ plane after rotation and rescaling. This is done to to make the structure of the branch visible as it is very flat in the original coordinates $(\lambda_1,\lambda_2,\lambda_3)$.
%The experiment shows that the marked point is a swallowtail point.
The plot is characteristic for swallowtail bifurcations (compare with \cref{fig:AseriesIllus}).
 %Repeating the experiment with increased values for $N$ and $M$ leads to figures which qualitatively look the same.
}\label{fig:SWLineofCuspsLineofFolds}
\end{figure}

\begin{remark}
Forming augmented systems for the discrete functional $S_\Delta$ (recall $\D_u S_\Delta = G$) gives rise to the same system of equations as discretising the continuous augmented systems from \cref{subsec:contaugsys}. In other words, forming the augmented system commutes with discretisation.
This extends \cref{rem:DiscreteLagrangianMethod}.
\end{remark}

\subsubsection{Determination of the position and convergence of the swallowtail point }

To calculate the position of the bifurcation point more accurately we apply Newton iterations to $F^{(3)}(u,\alpha,\bar v,\lambda)$ given as
\[F^{(3)}(u,\alpha,\bar v,\lambda)=
\begin{pmatrix}
G(u,\lambda)\\
G_u(u,\lambda) \alpha\\
\Delta x \Delta y\alpha^T \alpha -1\\
f_{uu}(u,\lambda)^T \alpha^{3}\\
\left(G^2_u (u,\lambda) + \alpha \alpha^T \right) \bar v +f_{uu} (u,\lambda)\\
f_{uuu}(u,\lambda)^T \alpha^{4} +6 (f_{uu} (u,\lambda) . \alpha^{2})^T G_u (u,\lambda) \bar v + 3 \bar v G^3_u (u,\lambda)\bar v
\end{pmatrix}
\]
until convergence using $(u_{\mathrm{SW}},\alpha_{\mathrm{SW}}, 0_{NM \times 1}, \lambda^{\mathrm{SW}})$ as initial guess. We obtain a root $(u^1_{\mathrm{SW}},\alpha^1_{\mathrm{SW}}, v^1_{\mathrm{SW}}, \lambda^1_{\mathrm{SW}})$ with $(u^1_{\mathrm{SW}},\alpha^1_{\mathrm{SW}},\lambda^1_{\mathrm{SW}})$ near $(u_{\mathrm{SW}},\alpha_{\mathrm{SW}},\lambda^{\mathrm{SW}})$.
We successively increase $(N,M)=(N,N)$ and repeat the process of finding a root of $F^{(3)}$. As initial guesses for Newton's method in the $j^{\mathrm{th}}$-step we use $\lambda^j_{\mathrm{SW}}$ from the previous calculation and linear interpolations of $u^j_{\mathrm{SW}}$, $\alpha^j_{\mathrm{SW}}$ and $v^j_{\mathrm{SW}}$ on the new grid with Dirichlet boundary conditions.
The Jacobian matrix of $F^{(3)}$ required for the Newton iterations is given as
\[
\D F^{(3)} = 
\begin{pmatrix}
G_u &0_{NM \times NM}&0_{NM \times NM}& \D_{\lambda} f\\
\diag(f_{uu}.\alpha) & G_u & 0_{NM \times NM} & \D_{\lambda} f_u . \alpha \\
0_{1 \times NM}& 2\Delta x \Delta y \alpha^T &0_{1\times NM}&0_{1 \times 3} \\
(f_{uuu}.\alpha^3)^T & 3 (f_{uu}.\alpha^2)^T & 0_{1 \times NM} & (\D_{\lambda} f_{uu}^T \alpha^3)^T\\
\D_u F^{(3)}_5
& \alpha \bar v^T+\alpha^T\bar v \Id_{NM}
& G_u^2 + \alpha \alpha^T
& \D_\lambda F^{(3)}_5\\
\D_uF_6^{(3)}
& \D_\alpha F_6^{(3)}
& \D_v F_6^{(3)}
& \D_\lambda F_6^{(3)}
\end{pmatrix}
\]
with
\begin{align*}
L&=\Id_M \otimes D_{xx} + D_{yy}\otimes \Id_N\\
\D_u F^{(3)}_5
&=L .(f_{uu}.\bar v )+\diag(f_{uu}. L \bar v+2f_{u}.f_{uu}.\bar v+f_{uuu})\\
\D_\lambda F^{(3)}_5
&=
L( (\D_\lambda f_u) .\bar v)+(\D_\lambda f_u).(L\bar v)+2(\D_\lambda f_u).f_u.\bar v+\D_\lambda f_{uu}\\
DQ^{(u)}&=2(f_{uu}.\bar v)^T.(\bar v^TL^2)
        +(f_{uu}.(L\bar v))^T.(\bar v^TL)
        +4(f_u.f_{uu}.\bar v)^T.(\bar v^TL)\\
        &+(f_{uu}.\bar v)^T.((f_u.\bar v)^TL)
        +((f_{uu}.\bar v).(L(f_u.\bar v)))^T
        +3(f_u^2.f_{uu}.\bar v)^T.\bar v^T\\
\D_uF_6^{(3)}
&=
(f_{uuuu}.\alpha ^4)^T 
       +6((f_{uuu}.\alpha ^2).(G_u\bar v))^T 
       +6(f_{uu}^2.\bar v.\alpha ^2)^T 
     +3DQ^{(u)}\\
\D_\alpha F_6^{(3)} &=
4(f_{uuu}.\alpha ^3)^T+12(f_{uu}.\alpha .(G_u\bar v))^T\\
\D_{\bar v} F_6^{(3)} &=
6(f_{uu}.\alpha ^2)^TG_u + 6(G^3_u \bar v)^T\\
DQ^{(\lambda)}&= 
2\bar v^TL^2((\D_\lambda f_u).\bar v)
+\bar v^TL((\D_\lambda f_u).(L\bar v))
+4\bar v^TL((\D_\lambda f_u).f_u.\bar v)\\
&+2(f_u.\bar v)^TL((\D_\lambda f_u).\bar v)+3\bar v^T((\D_\lambda f_u).f_u^2.\bar v)\\
\D_\lambda F_6^{(3)} &= ((\D_\lambda f_{uuu})^T\alpha ^4)^T 
                        +6(((\D_\lambda f_{uu}).\alpha ^2)^TG_u\bar v)^T \\
                        &+6(f_{uu}.\alpha ^2)^T((\D_\lambda f_u).\bar v) 
                        +3DQ^{(\lambda)}
\end{align*}
Here we use the convention that ``.'' denotes point-wise multiplication. Point-wise multiplication of a column vector with a matrix means that the vector is multiplied point-wise with each column of the matrix. Analogously for row vectors. Moreover, the power of a vector is to be understood point-wise.
Zero matrices of dimension $s\times t$ are denoted by $0_{s\times t}$ and identity matrices by $\Id_{s \times t}$.

\begin{remark}
Appropriate sub-matrices of $\D F^{(3)}$ correspond to $\D F^{(1)}$ and $\D F^{(2)}$. These have been used for the pseudoarclength continuation described in \cref{subsec:ContiAugDiscrete}. The Jacobian matrices $\D F^{(1)}$, $\D F^{(2)}$ and $\D F^{(3)}$ and the matrices $L$, $G_u$ involved in the function evaluations $F^{(1)}$, $F^{(2)}$ and $F^{(3)}$ are sparse and represented using an appropriate datatype in the numerical calculations.
Moreover, compared to equations for a general functional $S$, the structure of the semilinear Poisson equation dramatically simplifies occurring equations and numerical complexity because the multilinear operators $G_{uu}$, $G_{uuu}$, \ldots are diagonal and their diagonal is given by an evaluation of an appropriate derivative of $f$ at $u$. The simplicity of the structure has been observed in the continuous setting in \cref{rem:SimpleStructure}.
\end{remark}

\Cref{fig:SWConvergence} shows that as $N=M$ increases, the position of the swallowtail point converges in the parameter space. We also observe that the data $(u,\alpha,v)$ converges to a fixed shape. \Cref{fig:SWSolution} shows the shape of $(u,\alpha,v)$ at the approximated swallowtail point for $(N,M)=(85,85)$.

\begin{figure}

\includegraphics[width=0.3\textwidth]{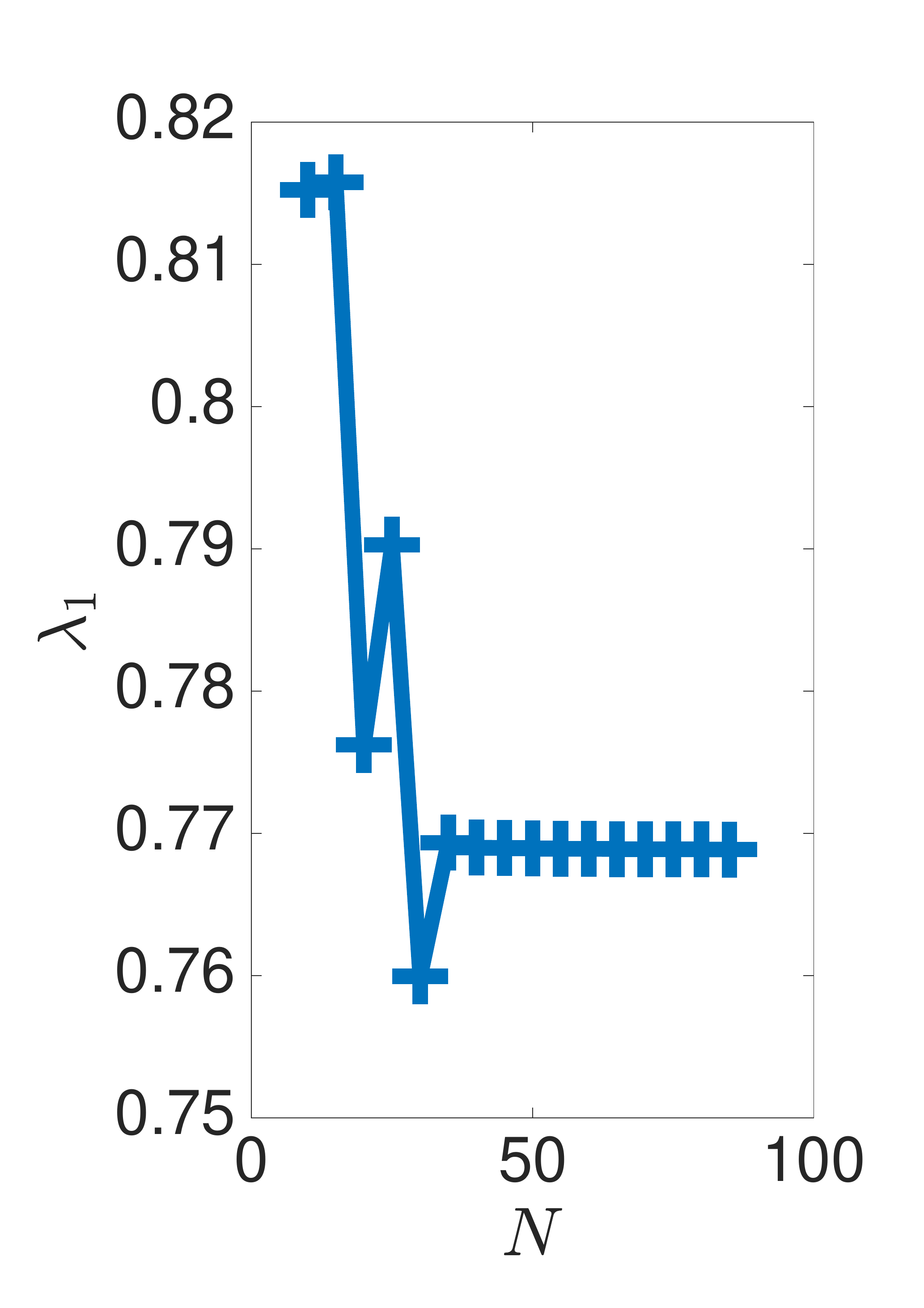}
\includegraphics[width=0.3\textwidth]{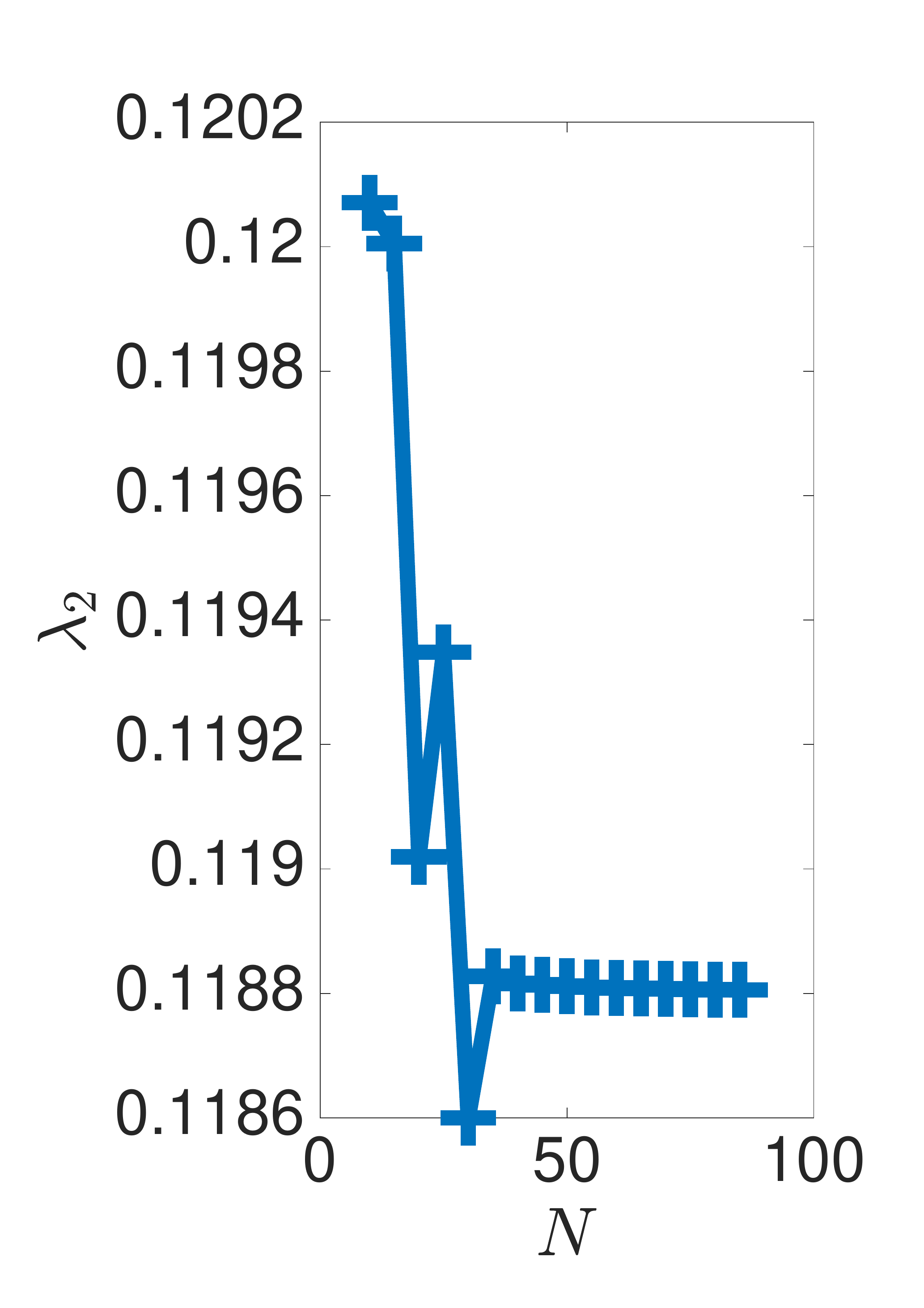}
\includegraphics[width=0.3\textwidth]{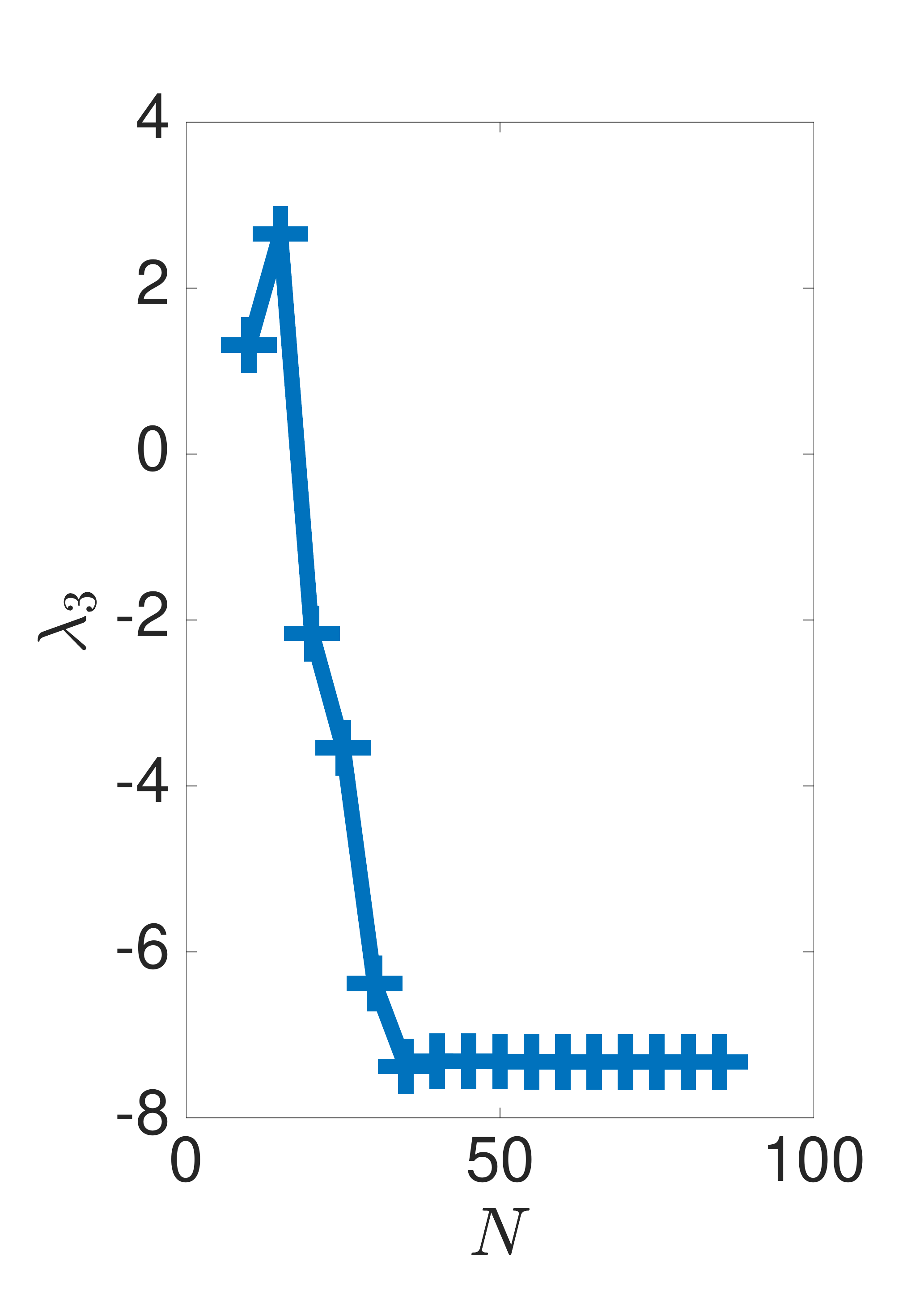}

\caption{Position of the parameters $\lambda_1$, $\lambda_2$ and $\lambda_3$ as a function of the number of inner grid points per dimension $N$ with $N=10,15,\ldots,80,85$. The position of $\lambda$ is obtained by calculating a root of $F^{(3)}(u,\alpha,\bar v,\lambda)$ using Newton iterations until convergence ($\infty$-norm smaller $10^{-9}$). We see that the values converge as $N$ increases.}\label{fig:SWConvergence}
\end{figure}

\begin{figure}
\begin{center}
\includegraphics[width=0.8\textwidth]{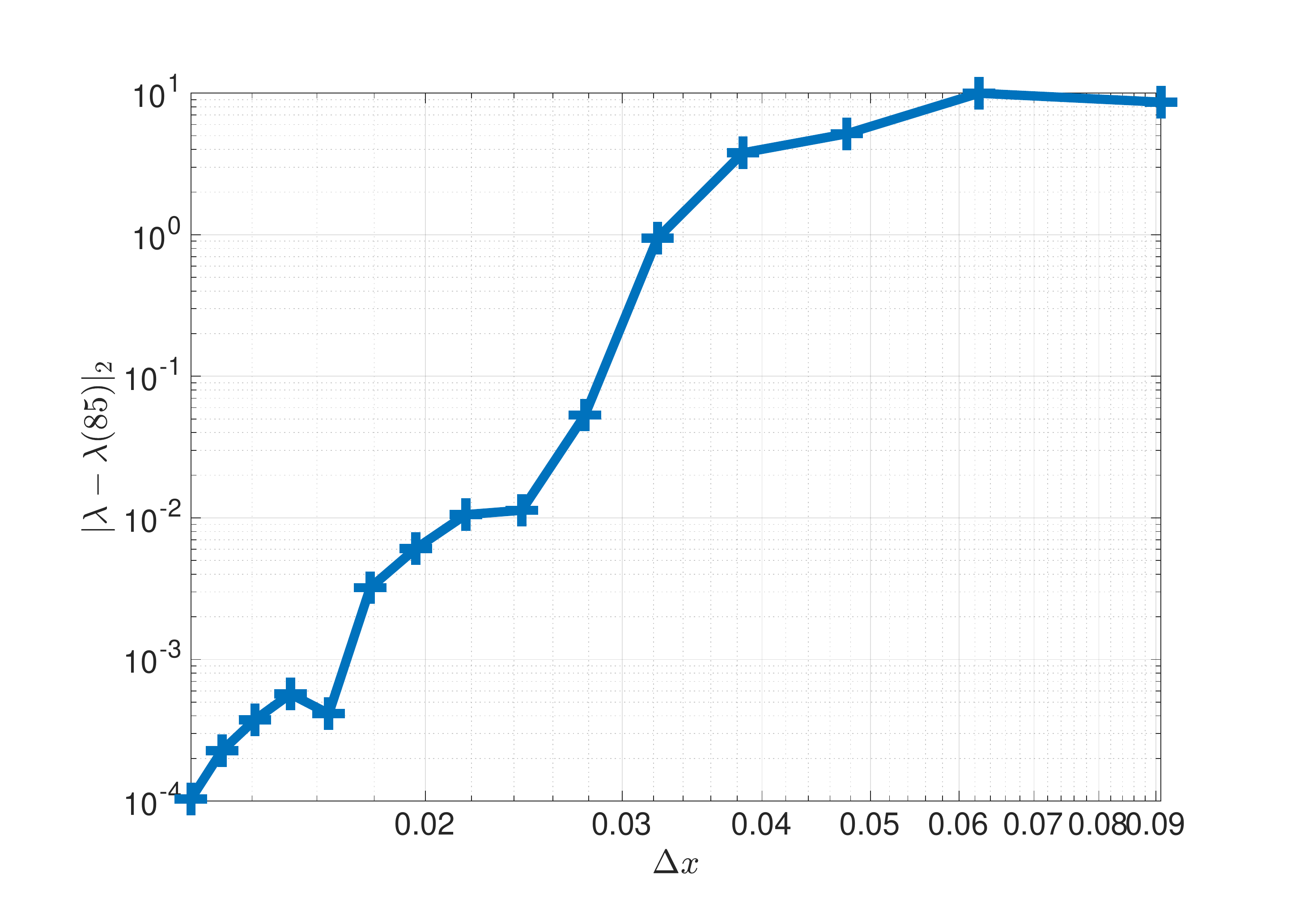}

\end{center}
\caption{Euclidean distance of the parameter $(\lambda_1, \lambda_2, \lambda_3)$ to the calculated value at $N=85$ as a function of the inner grid spacing $\Delta x = \frac 1 {N+1}$. Here $N$ is the number of inner grid points per dimension $N$ with $N=10,15,\ldots,80,85$. In each step the position of $\lambda$ is obtained by calculating a root of $F^{(3)}(u,\alpha,\bar v,\lambda)$ using Newton iterations until convergence ($\infty$-norm smaller $10^{-9}$). We see that the values converge as $\Delta x$ decreases.}\label{fig:SWConvergenceToL85}
\end{figure}

\begin{figure}

\includegraphics[width=0.32\textwidth]{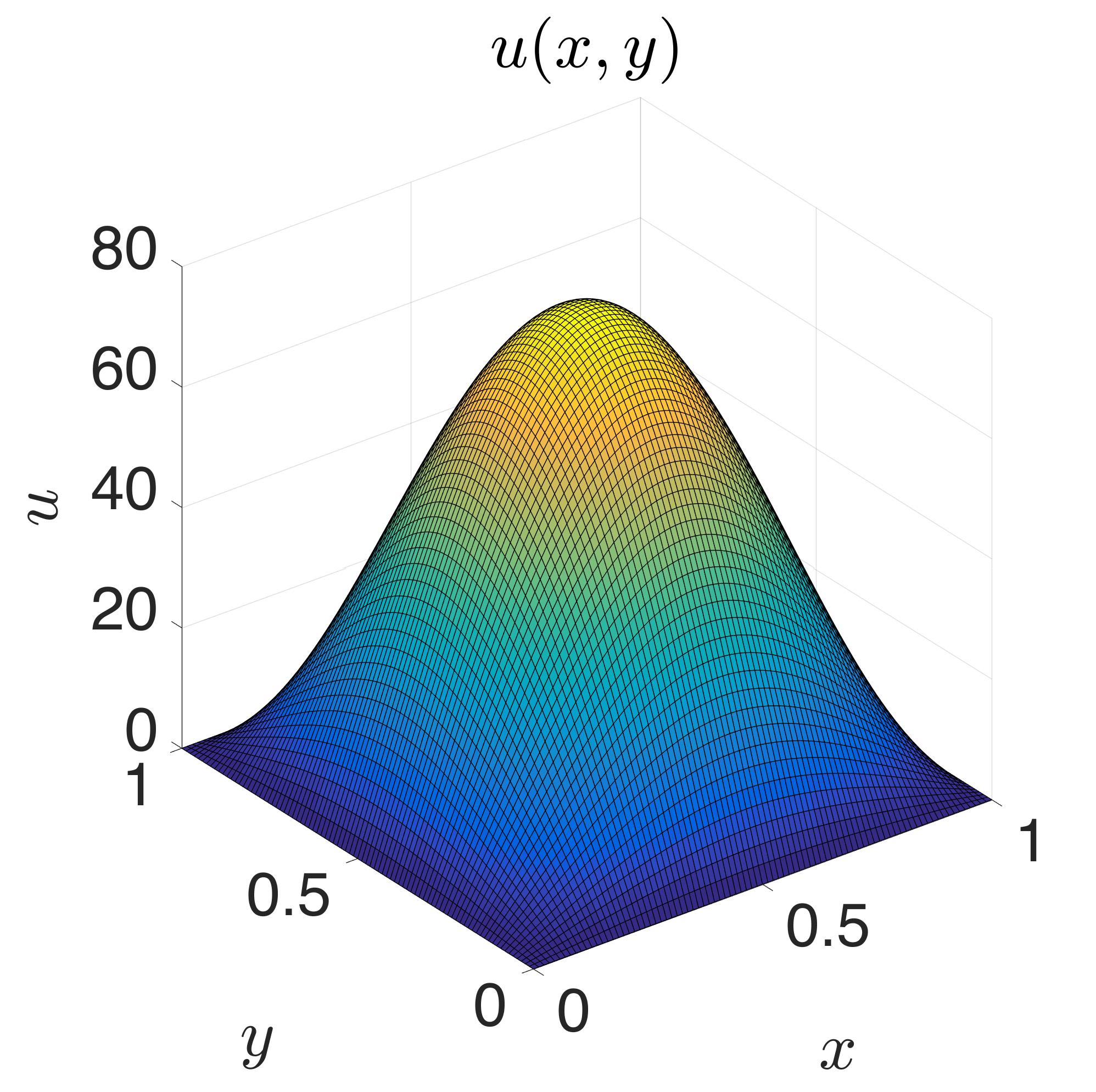}
\includegraphics[width=0.32\textwidth]{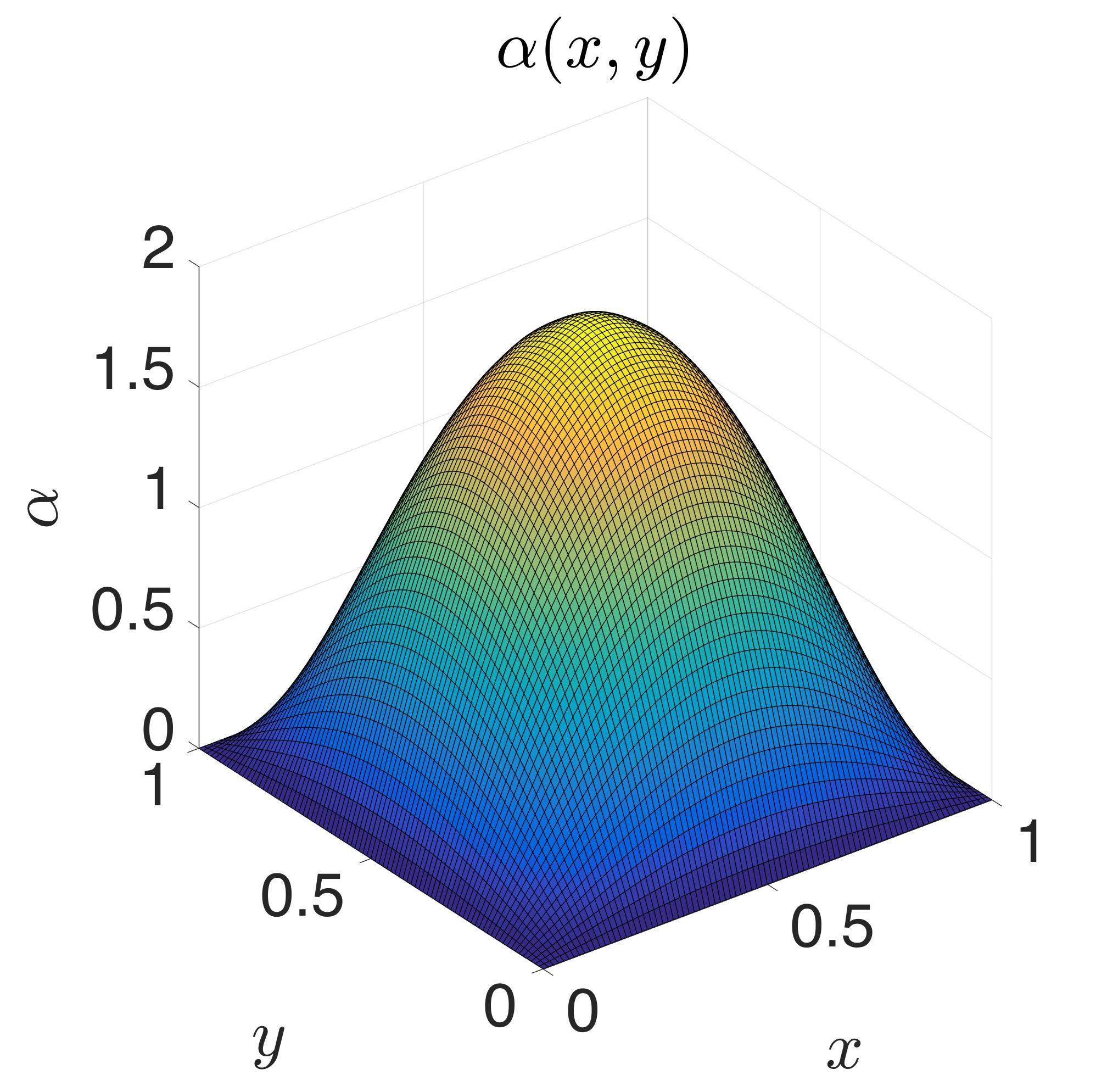}
\includegraphics[width=0.32\textwidth]{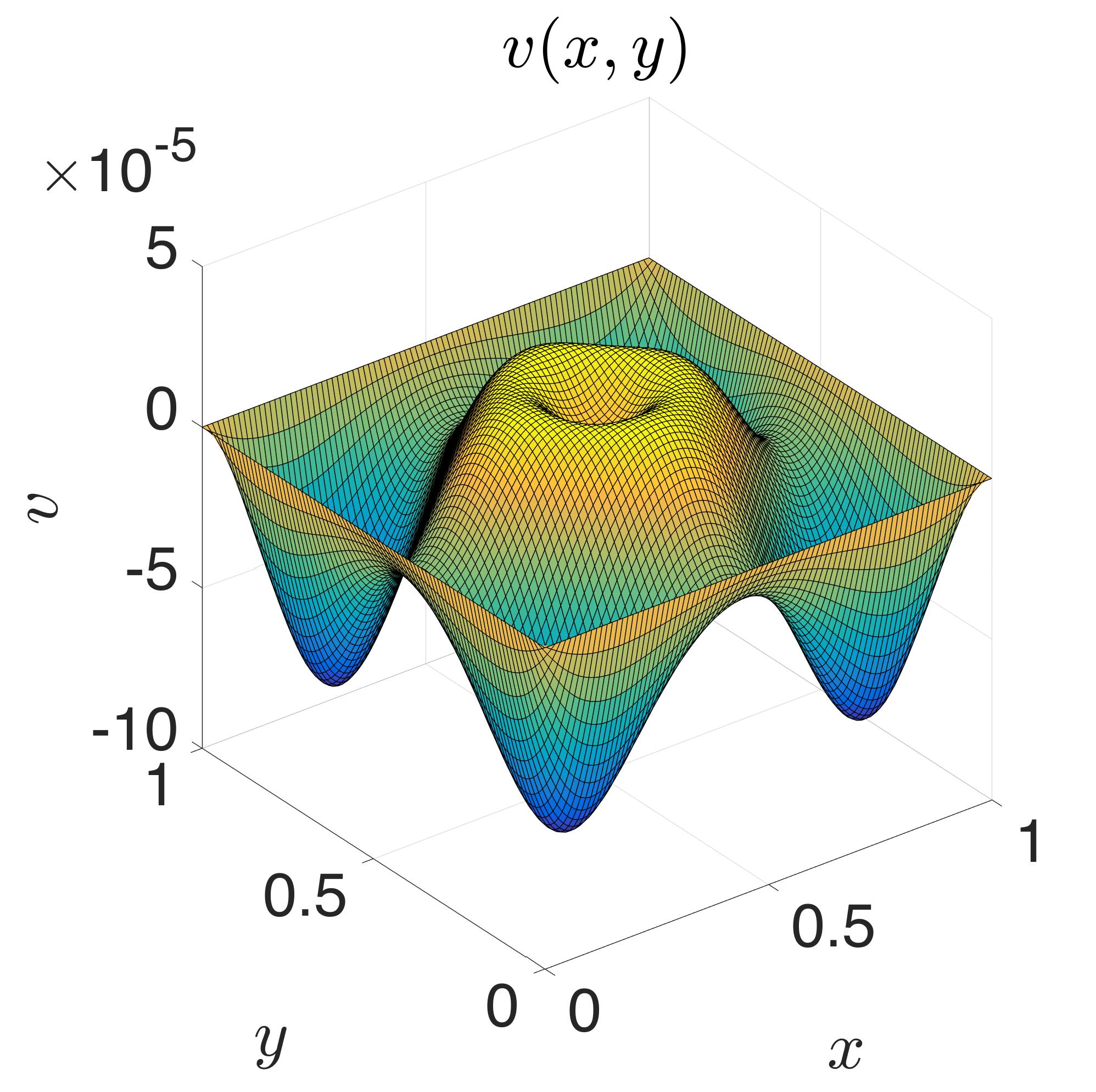}

\caption{Plot of the data $u(x,y)$, $\alpha(x,y)$ and $v(x,y)$ at the approximated swallowtail point for $(N,M)=(85,85)$.}\label{fig:SWSolution}
\end{figure}

\section{Conclusion}\label{sec:Conclusion}

In conclusion, we derived bifurcation test equations for $A$-series singularities of nonlinear functionals and, based on these equations, we developed numerical methods for the  detection of high codimensional branching bifurcations in parameter-dependent PDEs where a variational integrator is used for the discretisation of the problem. This numerical computation  is illustrated by detecting a swallowtail bifurcation in a Bratu-type problem. As part of future research, numerical experiments for finding high codimensional bifurcations for other PDE types can be performed. Another interesting question is the rigorous proof of the convergence rate to the singularity of the numerical method. Besides, bifurcation test equations for $D$-series singularities can be derived and the bifurcations can be numerically detected for different PDE types by considering either variational or non-variational integrators.

\ack
%\section*{Acknowledgements}
We would like to thank Wolf-Jürgen Beyn and Tom ter Elst for useful feedback on the manuscript and Hinke Osinga, Bernd Krauskopf, and Chris Budd for fruitful discussions.
This research was supported by the Marsden Fund of the Royal Society Te Ap\={a}rangi. 
The authors would like to thank the Isaac Newton Institute for Mathematical Sciences, Cambridge, for support and hospitality during the programme Geometry, compatibility and structure preservation in computational differential equations (2019) where work on this paper was undertaken. This work was supported by EPSRC grant No. EP/K032208/1.
LMK was supported by the UK Engineering and Physical Sciences Research Council (EPSRC) grant EP/L016516/1, the RISE projects CHiPS and NoMADS, and the German National Academic Foundation (Studienstiftung des Deutschen Volkes).
 
\section*{ORCID iDs}
L M Kreusser \url{https://orcid.org/0000-0002-1131-1125}\\
R I McLachlan \url{https://orcid.org/0000-0003-0392-4957}\\
C Offen \url{https://orcid.org/0000-0002-5940-8057}

\section*{References}
\bibliography{resources}{}
\bibliographystyle{iopart-num} 
%\bibliographystyle{plain} 

%\bibliographystyle{spmpsci}
%\bibliography{resources}

\end{document}